\allowdisplaybreaks \numberwithin{equation}{section}
\numberwithin{equation}{section}
\newtheorem{theorem}{Theorem}[section]
\newtheorem{proposition}[theorem]{Proposition}
\newtheorem{corollary}[theorem]{Corollary}
\newtheorem{lemma}[theorem]{Lemma}
\newtheorem*{Yudovich's Theorem}{Yudovich's Theorem}
\theoremstyle{definition}
\theoremstyle{remark}
\newtheorem{remark}[theorem]{Remark}
\begin{document}

\title
[Stability of degree-2 Rossby-Haurwitz waves ]{Stability of degree-2 Rossby-Haurwitz waves}

 \author{Daomin Cao, Guodong Wang,  Bijun Zuo}
 \address{Institute of Applied Mathematics, Chinese Academy of Sciences, Beijing 100190, and University of Chinese Academy of Sciences, Beijing 100049,  P.R. China}
\email{dmcao@amt.ac.cn}
\address{Institute for Advanced Study in Mathematics, Harbin Institute of Technology, Harbin 150001, P.R. China}
\email{wangguodong@hit.edu.cn}
\address{College of Mathematical Sciences, Harbin Engineering University, Harbin {\rm150001}, PR China}
\email{bjzuo@amss.ac.cn}

%\thanks{}

\begin{abstract}
Rossby-Haurwitz (RH) waves are important explicit solutions of the incompressible Euler equation on a two-dimensional rotating sphere. In this paper, we prove the orbital stability of degree-2 RH waves, which confirms a conjecture proposed by A. Constantin and P. Germain in [Arch. Ration. Mech. Anal. {\bf245}, 587--644, 2022]. 
The proofs are based on a variational approach,  with the main challenge being to establish  suitable variational characterizations for  the solutions under consideration.   In this process, the set of rearrangements of a fixed function plays a vital role.
We also apply our approach  to the stability analysis of degree-1 RH waves,  Arnold-type flows, and  zonal flows with monotone absolute vorticity.
 \end{abstract}

\maketitle
\tableofcontents
\section{Introduction}\label{sec1}

In this article, we investigate the motion of an incompressible inviscid fluid on the two-dimensional  sphere $\mathbb S^{2}$ with standard metric,
\begin{equation}\label{hb1}
\mathbb S^{2}=\left\{\mathbf x=(x_1,x_2,x_3)\in\mathbb R^{3}\mid x_1^{2}+x_2^{2}+x_3^{2}=1\right\}.
\end{equation}
The sphere is allowed to rotate at a constant speed about the polar axis (i.e., the $ x_3$-axis).
 The governing equation  for the motion is the incompressible Euler equation.   In terms of the absolute vorticity $\zeta$ of the flow, the Euler equation  can be written as a first-order  nonlinear transport equation,
\begin{equation}\label{ave0}
\partial_t\zeta+ \mathbf v\cdot\nabla\zeta=0,\quad  \mathbf x\in\mathbb S^2,\,t>0,
\end{equation}
where $\mathbf v$ is the velocity field of the flow, which  can be recovered from $\zeta$ via the Biot-Savart law. Refer to \eqref{ave} in Section \ref{sec22}.

 Analogous to the fluid motion in a two-dimensional Euclidean domain or a flat  2-torus (cf. \cite{MB,MP,WZ0,WS}), for a fluid moving on a rotating sphere, the  kinetic energy and the distribution function of the absolute vorticity remain invariant.   In particular,  solutions of the Euler equation satisfy an infinite
number of integral conservation laws.  Furthermore, due to the rich symmetry structure of $\mathbb S^{2}$, there is an additional conservation law involving the projection of the absolute vorticity onto the first eigenspace of the Laplace-Beltrami operator (cf. \eqref{cl3} in Section \ref{sec24}). As we will see later, this feature brings many new distinctive aspects to the study of the Euler equation  on $\mathbb S^{2}$.

The Rossby-Haurwitz (RH) waves  are a class of steady or traveling-wave solutions to the Euler equation on a rotating sphere with explicit expressions, which have a very wide range of  applications in meteorology.  See \cite{Crai,Haur,Ross,Verk}. In the longitude-latitude spherical coordinates,
\begin{equation}\label{hb2}
(\varphi,\theta)\mapsto (\cos\theta\cos\varphi,\cos\theta\sin\varphi,\sin\theta)\in\mathbb S^2,\quad (\varphi,\theta)\in(-\pi,\pi)\times(-\frac{\pi}{2},\frac{\pi}{2}),
\end{equation}
RH waves of degree $j$  in terms of the absolute vorticity $\zeta$ take the following form:
\begin{equation}\label{av00}
\zeta(\varphi,\theta,t)=\alpha\sin\theta+Y(\varphi-ct,\theta),\quad Y\in\mathbb E_{j},
\end{equation}
where $\alpha$ is a real number, $\mathbb E_{j}$ is the $j$-th eigenspace of the Laplace-Beltrami operator (cf. Section \ref{sec21}), and $c$ is the traveling speed
satisfying
\begin{equation}\label{tspeed}
c= \alpha\left(\frac{1}{2}- \frac{1}{j(j+1)}\right)- \omega.
\end{equation}
Note that $\zeta$ is steady (i.e., $\zeta$ does not depend on the time variable $t$) if and only if $Y$ is zonal (i.e., $Y$ depends only on $\theta$), or
\[\omega=\alpha\left(\frac{1}{2}- \frac{1}{j(j+1)}\right).\]
 Many   phenomena in the terrestrial atmosphere, such as  the  flows with a banded structure in polar regions and the  5-day wave (cf. \cite{CG,HYHH}), are closely related to   RH waves.  Besides,  RH waves   have also been observed in the atmospheres of the outer  planets of our solar system such as Jupiter and Saturn. See the discussion in \cite{CG,Dow}.

In this paper, we will be focusing on the stability issue of
 RH waves, which is crucial for understanding the complicated dynamics of the atmosphere.
In the literature, there has  already been considerable research on this topic. See \cite{Baines,Benard,CG,Hos,Hos2,Lorenz,Skiba0,Skiba1,Skiba2} and the references therein.
However, most of these results deal with linear stability\footnote{As mentioned in \cite{CG},  The linearization  approach helps to gain valuable insights into nonlinear stability, but the question of whether it can produce definitive conclusions remains a topic of ongoing debate and investigation.}, or are obtained by numerical means.
The question of whether RH waves are nonlinearly stable  at a rigorous level  is less well understood. We summarize some related results in the literature as follows:
\begin{itemize}
\item [(1)]If $\omega=0,$ then any degree-1 RH wave is nonlinearly stable in $L^{2}$ norm of the absolute vorticity. For general $\omega\in\mathbb R,$
any degree-1 RH wave is, modulo the group of rotations about the polar axis, nonlinearly stable in $L^{2}$ norm of the absolute vorticity.
This can be easily verified by applying the conservation laws of the Euler equation. See \cite{CG} or the detailed discussion in Section \ref{sec24}.
\item [(2)] Zonal RH waves  of degree 2 are nonlinearly stable in $L^{2}$ norm of the absolute vorticity under perturbations with uniformly bounded  absolute vorticity. See \cite{CG}, Theorem 7(i).
\item[(3)]  Non-zonal  RH waves of any degree are nonlinearly unstable. See \cite{CG}, Theorem 7(ii).
\item[(4)] The instability of steady RH waves of degree 2 can only occur by energy transfer between spherical harmonics of degree 2. See \cite{CG}, Theorem 7(iii).
\end{itemize}
Although non-zonal RH waves are known to be nonlinearly unstable,  some studies, such as the  numerical analysis in  \cite{Baines,Hos} and the linear stability result in \cite{T},  seem  to support the assertion that degree-2 RH waves are  orbitally stable, i.e., they are stable modulo some continuous symmetries.
In a recent paper, Constantin and Germain proposed this assertion as a conjecture (cf. \cite{CG}, Remark 1), and they believed that the method developed by  Wirosoetisno and   Shepherd \cite{WS} could be used to verify this conjecture.

 The main purpose of the present paper is to verify Constantin and Germain's  conjecture.
Roughly speaking, our results can be  stated as follows:
 \begin{itemize}
 \item[(S1)]  For any solution of the Euler equation with absolute vorticity $\zeta$, if $\zeta_{0}$ is ``close'' to some RH state $\alpha\sin\theta+Y$ with $\alpha\neq 0,$ $Y\in\mathbb E_{2},$ then at any time $t>0$,  the evolved absolute vorticity $\zeta_{t}$ remains ``close'' to some RH state $\alpha\sin\theta+Y^{t}$, where $Y^{t}$ is a rigid rotation of $Y$ about the polar axis. Here and hereafter $\zeta_{t}$ is the abbreviation of $\zeta(\cdot,t).$
 \item[(S2)]   For any solution of the Euler equation with absolute vorticity $\zeta$, if  $\zeta_{0}$ is ``close'' to some RH state $Y$ with $Y\in\mathbb E_{2},$ then  at any time $t>0$, the evolved absolute vorticity $\zeta_{t}$ remains ``close'' to some RH state $Y^{t}$, where $Y^{t}$ is a rotation of $Y$ under the three-dimensional special orthogonal group $\mathbf S\mathbf O(3)$.
 \end{itemize}
 The rigorous statements of (S1) and (S2) are given in Section \ref{sec25}.  One notable feature of our results is that the ``closeness" in (S1) and (S2) is  measured   in  $L^p$ norm of the absolute vorticity for any $1<p<+\infty$, which is more general in contrast with most previous studies  performed in the Hilbert space setting.

The proofs of our main results 
 are based on the Lyapunov functional method. In comparison to Wirosoetisno and Shepherd's approach  in \cite{WS},   our method  entails less technical complications (although  still involves a large amount of algebraic and integral calculations), and 
has a wider range of applications (cf. Section \ref{sec5}).   Roughly speaking,  our  proofs consist of three main steps:
 \begin{itemize}
 \item [(i)] (Variational characterization) We need to study a variational problem regarding the maximization or minimization of some suitable  conserved functional  relative to some invariant set of the Euler equation, and then show that the  solutions under consideration  constitute an isolated set of   maximizers or  minimizers. 
   \item [(ii)] (Compactness) We need to prove that any maximizing or minimizing sequence in the variational problem is compact.
    \item [(iii)] (Proof of stability) Using the  conservation laws and the time continuity of  solutions of the Euler equation, we can prove stability based on  the variational characterization in the first step and the compactness in the second step.
 \end{itemize}
 For the sake of convenience in applications, we formulate the third step as a stability criterion (cf. Theorem \ref{stac} in Section \ref{sec4}), thus reducing the  stability problem to establishing a suitable variational characterization and verifying the compactness.

 The first step is of significance, as it requires constructing a conserved functional that characterizes the  solutions  appropriately and, at the same time, ensures the compactness in the subsequent step. In fact, establishing a suitable variational characterization is usually the main challenge in  the study of hydrodynamic stability problems. See \cite{Abe,BG,BAR,Bjde,Bcmp,CWCV,CWN,CD,WGu1,WGu2,WZ}.
In the present paper, the variational characterizations  involve  maximizing a combination of conserved quantities of the Euler equation relative to a set of rearrangements of a given function, which are mostly inspired by Burton's work \cite{BMA,BHP,BAR}. In the 1980s, Burton \cite{BMA,BHP} developed a variational theory for convex functionals on a set of  rearrangements of a given function and applied it to the existence of vortex rings.  In 2005, based on the results in \cite{BMA,BHP},  Burton \cite{BAR} established some criteria for the nonlinear stability of planar ideal flows in a general bounded domain. More specifically, Burton proved that, if the vorticity of a steady flow is an isolated maximizer or minimizer of the kinetic energy relative to a set of  rearrangements of a given function, then this steady flow is nonlinearly stable in the $L^{p}$ norm of the vorticity.   For further developments or applications of Burton's stability criteria, see \cite{Bjde,Bcmp,CWCV,CWN,WGu1,WGu2,Wang,WZ} and the references therein.
In these studies, the conserved functional is usually a combination of the kinetic energy of the fluid and some other conserved quantities related to  symmetries of the domain (such as the impulse functional in the case of traveling vortex pairs in the whole plane). For the Euler equation on a  sphere,  an important conserved quantity, due to rotational symmetry, is the projection of the absolute vorticity onto the first eigenspace. Appropriate use of this conservation law is one of the main innovations of this paper.

 It is worth mentioning that   both (S1) and (S2) are \emph{optimal}, which can be verified by making proper use of the traveling RH solutions \eqref{av00} and the rotational invariance of the Euler equation. The detailed discussion is given in Section \ref{sec8}.

This paper is organized as follows. In  Section \ref{sec2}, we give the rigorous mathematical formulation of the problem and state our main theorems  concerning the orbital stability of degree-2 RH waves. In Section \ref{sec3}, we prepare some preliminary materials, including the $L^{p}$ theory for the Poisson equation on a sphere,  a Poincar\'e-type inequality, and  some properties of the set of rearrangements of a fixed function. In Section \ref{sec4}, we  prove a  general  stability criterion for  subsequent use.
In   Section \ref{sec5}, as a warm-up, we apply the stability criterion    to  study the stability of degree-1 RH waves,  Arnold-type flows, and zonal flows with monotone absolute vorticity. 
In Sections \ref{sec6} and \ref{sec7}, we prove the  main theorems. 
In Section \ref{sec8}, we show that our main theorems are in fact optimal.

\section{Mathematical setting and main results}\label{sec2}
\subsection{Notation and basic facts}\label{sec21}

  Let us start with some notations and definitions that will be used frequently. Throughout this paper, $\mathbb S^2$ denotes the 2-sphere given by \eqref{hb1} with the standard metric induced via pullback of the Euclidean space $\mathbb R^3$.
The longitude-latitude spherical coordinates  for   $\mathbb S^2$ are given by \eqref{hb2}.
In spherical coordinates,   the area element related to the standard metric is
 \[d\sigma=\cos\theta d\varphi d\theta.\]

In  spherical coordinates,  for a scalar function $f$ and two tangent vector fields $\mathbf u,\mathbf v$ on $\mathbb S^2$,
\[\mathbf u=u_{\varphi}\mathbf e_{\varphi}+u_{\theta}\mathbf e_{\theta},\quad \mathbf v=v_{\varphi}\mathbf e_{\varphi}+v_{\theta}\mathbf e_{\theta},\quad \mathbf e_\varphi:=\frac{1}{\cos\theta}\partial_\varphi,\quad \mathbf e_\theta:=\partial_\theta,\]
the covariant derivative of $f$ along $\mathbf v$  can be computed by
\[\nabla_{\mathbf v} f=\frac{1}{\cos\theta}\partial_{\varphi}fv_\varphi +\partial_{\theta}fv_{\theta},\]
and the covariant derivative of $\mathbf u$ along $\mathbf v$  can be computed by
\[
\nabla_{\mathbf v} \mathbf u=\left(\frac{v_\varphi\partial_\varphi u_\varphi}{\cos\theta} +v_\theta\partial_\theta u_\varphi-v_\varphi u_\theta \tan\theta\right)\mathbf e_\varphi+\left(\frac{v_\varphi\partial_\varphi u_\theta}{\cos\theta} +v_\theta\partial_\theta u_\theta+v_\varphi u_\varphi \tan\theta\right)\mathbf e_\theta.
\]
Classical differential operators used in this paper include
\begin{itemize}
\item   the gradient operator $\nabla$ acting on scalar functions, given by
\[\nabla f=\frac{1}{\cos\theta}\partial_\varphi f\mathbf e_{\varphi}+\partial_{\theta}f\mathbf e_{\theta},\]
\item the divergence operator ${\rm div}$ acting on vector fields, given by
\[{\rm div}\mathbf v=\frac{1}{\cos\theta}\left(\partial_{\varphi}v_{\varphi}+\partial_{\theta}(\cos\theta v_{\theta})\right),\]
\item  and the  Laplace-Beltrami operator $\Delta$ acting on scalar functions, given by
\[\Delta f={\rm div}(\nabla f)=\partial_{\theta\theta}f+\frac{1}{\cos^2\theta}\partial_{\varphi\varphi}f-\tan\theta\partial_\theta f.\]
\end{itemize}
One can easily check that
\begin{equation}\label{fla1}
{\rm div}(f\mathbf v)=\nabla f\cdot\mathbf v+f{\rm div}\mathbf v,
\end{equation}
\begin{equation}\label{fla2}
\nabla\left(\frac{|\mathbf v^2|}{2}\right)=\nabla_{\mathbf v}\mathbf v+{\rm div}(J\mathbf v)J\mathbf v,
\end{equation}
\begin{equation}\label{fla3}
{\rm div}(J\nabla f)=0,
\end{equation}
where $J$ represents the anticlockwise rotation through $\pi/2$ in the $\mathbf e_\varphi$-$\mathbf e_\theta$ plane,
\begin{equation}\label{fla4}
J\mathbf v =-v_\theta\mathbf e_\varphi+v_\varphi\mathbf e_\theta,\quad J\nabla f=-\partial_{\theta}f\mathbf e_{\varphi}+\frac{1}{\cos\theta}\partial_\varphi f\mathbf e_{\theta}.
\end{equation}

Consider the following Laplace-Beltrami eigenvalue problem
\[\Delta u=\lambda u\quad \mbox{on }\mathbb S^2,\quad\int_{\mathbb S^2}ud\sigma=0.\]
The set of eigenvalues is $ \{-j(j+1)\}_{j=1}^{+\infty}$. Note that the zero mean restriction  excludes zero as an eigenvalue.
The eigenspace $\mathbb E_j$ related to the eigenvalue $-j(j+1)$ has a    complex basis
$\{Y_j^{m}\}_{|m|\leq j},$
with $ Y_j^{m}$ being the spherical harmonic of degree $j$ and zonal number $m$,
\begin{equation}\label{yjm}
    Y_j^m(\varphi,\theta)=(-1)^m\sqrt{\frac{(2j+1)(j-m)!}{4\pi(j+m)!}}P_j^m(\sin\theta)e^{im\varphi}, \quad m=-j,\cdot\cdot\cdot,0,\cdot\cdot\cdot,j.
\end{equation}
Here $P_j^m:[-1,1]\mapsto\mathbb R$ is an associated  Legendre polynomial, given by
\[P_j^m(x)=\frac{1}{2^jj!}(1-x^2)^{m/2}\frac{d^{j+m}}{d^{j+m}}(x^2-1)^j.\]
Note that the basic $\{Y_j^m\}$  satisfies
\begin{equation}\label{cjut}
Y_j^{-m}=(-1)^m\overline{Y_j^m},
\end{equation}
where the overbar denotes complex conjugation.  See \cite{Muller,CG}.
A real basis $\{R_j^m\}_{|m|\leq j}$ of the eigenspace  $\mathbb E_j$ can be obtained from the complex basis $\{Y_j^m\}_{|m|\leq j}$:
\[R_j^m(\varphi,\theta)=
\begin{cases}
(-1)^m\sqrt{\frac{(2j+1)(j-|m|)!}{2\pi(j+|m|)!}}P_j^{|m|}(\sin\theta)\sin(|m|\varphi),\quad &-j\leq m\leq -1,\\
 \sqrt{\frac{2j+1 }{4\pi}}P_j^0(\sin\theta),\quad &m=0,\\
(-1)^m\sqrt{\frac{(2j+1)(j-m)!}{2\pi(j+m)!}}P_j^{m}(\sin\theta)\cos(m\varphi),\quad& 1\leq m\leq j.
\end{cases}\]
The expressions of degree-1 and degree-2   spherical harmonics are used in this paper:
\[  Y_1^0=\sqrt{\frac{3}{4\pi}}\sin\theta,\quad Y_1^{\pm1}=\mp\sqrt{\frac{3}{4\pi}}\cos\theta e^{\pm i\varphi},
\]
\[Y_2^0=\sqrt{\frac{5}{16\pi}}(3\sin^2\theta-1),\quad  Y_2^{\pm 1}=\mp\sqrt{\frac{15}{8\pi}}\sin\theta\cos \theta e^{\pm  i\varphi},\quad Y_2^{\pm 2}=\sqrt{\frac{15}{32\pi}}\cos^2\theta e^{\pm 2i\varphi}. \]
The corresponding real spherical harmonics have the following expressions:
\[   R_1^0=\sqrt{\frac{3}{4\pi}}\sin\theta,\quad  R_1^{1}= -\sqrt{\frac{3}{4\pi}}\cos\theta\cos\varphi,\quad R_1^{-1}= \sqrt{\frac{3}{4\pi}}\cos\theta \sin\varphi.\]
\[ R_2^0=\sqrt{\frac{5}{16\pi}}(3\sin^2\theta-1),\quad R_2^{1}=-\sqrt{\frac{15}{4\pi}}\sin\theta\cos\theta \cos\varphi,\quad R_2^{-1}=\sqrt{\frac{15}{4\pi}}\sin\theta\cos\theta \sin\varphi.\]
\[R_2^{2}=\sqrt{\frac{15}{32\pi}}\cos^2\theta \cos(2\varphi),\quad  R_2^{-2}=\sqrt{\frac{15}{32\pi}}\cos^2\theta \sin(2\varphi).\]
In particular,
\begin{equation}\label{e2span}
\mathbb E_2=\mbox{span}\{3\sin^2\theta-1, \sin(2\theta)\cos\varphi, \sin(2\theta)\sin\varphi, \cos^2\theta\cos(2\varphi), \cos^2\theta\sin(2\varphi)\}.
\end{equation}

 For $1\leq p\leq +\infty,$ denote by $\mathring L^p(\mathbb S^2)$ the set of all $L^p$ functions\footnote{All function on $\mathbb S^2$ involved in this paper are assumed to be real-valued except for the complex spherical harmonics $\{Y_j^m\}_{j\geq 1,|m|\leq j}$.} on $\mathbb S^2$ with zero mean, i.e.,
 \[\mathring {L}^p(\mathbb S^2):=\left\{f\in L^{p}(\mathbb S^{2})\mid \int_{\mathbb S^{2}}fd\sigma=0\right\}.\]
 Analogously, denote
 \[ \mathring {W}^{k,p}(\mathbb S^2):=\left\{f\in W^{k,p}(\mathbb S^{2})\mid \int_{\mathbb S^{2}}fd\sigma=0\right\},\quad k\in\mathbb Z_+,\,1\leq p\leq +\infty,\]
 \[\mathring C^k(\mathbb S^2):=\left\{f\in C^{k}(\mathbb S^{2})\mid \int_{\mathbb S^{2}}fd\sigma=0\right\},\quad k\in\mathbb Z_+\cup\{0,+\infty\}.\]
 It is easy to check that $\mathring {L}^p(\mathbb S^2),$ $\mathring {W}^{k,p}(\mathbb S^2)$ and $\mathring C^k(\mathbb S^2)$  are closed subspaces of ${L}^p(\mathbb S^2)$,  ${W}^{k,p}(\mathbb S^2)$  and $C^k(\mathbb S^2)$, respectively.

For  $f\in \mathring L^2(\mathbb S^2)$, we have the following Fourier expansion in terms of  $\{Y_j^m\}_{j\geq 1,|m|\leq j}$:
 \begin{equation}\label{ty0}
 f=\sum_{j\geq 1}\sum_{|m|\le j}c_j^mY_j^m, \quad c_j^m:=\int_{\mathbb S^2}f\overline{ Y_j^m}d\sigma.
 \end{equation}
Since $f$ is real-valued,  in view of \eqref{cjut}, one can easily check that the Fourier coefficients $\{c_j^m\}_{j\geq 1, |m|\leq j}$ satisfy
\[c_j^{-m}=(-1)^m\overline{{c}_j^m},\quad\forall\,j\geq 1,\,|m|\leq j.\]

 Denote by $\mathcal G$ the inverse of $-\Delta$ on $\mathbb S^2$ with zero mean condition, i.e., for any scalar function $f$ with zero mean,
\begin{equation}\label{grop}
-\Delta (\mathcal Gf)=f,\quad \mathcal \int_{\mathbb S^2}\mathcal Gf d\sigma=0.
\end{equation}
For example,
\begin{equation}\label{gsint}
\mathcal G(\sin\theta)=\frac{1}{2}\sin\theta.
\end{equation}
The operator $\mathcal G$ is a bounded linear operator from $\mathring L^p(\mathbb S^2)$ to $\mathring W^{2,p}(\mathbb S^2)$ for any $1<p<+\infty$ (cf.  Lemma \ref{lpt} in Section \ref{sec3}). If $f\in\mathring L^2(\mathbb S^2)$ has the Fourier expansion \eqref{ty0}, then   $\mathcal Gf$ can be expressed as
 \[\mathcal Gf=\sum_{j\geq 1}\sum_{|m|\le j}\frac{c_j^m}{j(j+1)}Y_j^m.\]

  Denote by $ \mathbf O(3)$  the three-dimensional orthogonal group, and by $ \mathbf S\mathbf O(3)$ the three-dimensional special orthogonal group (also called the three-dimensional rotation group).
For any function $f:\mathbb S^2\mapsto\mathbb R,$ define
 \begin{equation}\label{oy}
\mathcal O_{f}:=\{ f\circ\mathsf g\mid \mathsf g\in\mathbf O(3)\},
\end{equation}
 \begin{equation}\label{oyp}
\mathcal O^+_{f}:=\{ f\circ\mathsf g\mid \mathsf g\in\mathbf S\mathbf O(3)\},
\end{equation}
 Denote by $\mathbf H$ the orthogonal group in the $x_{1}$-$x_{2}$ plane, and by $\mathbf H^+$ the special orthogonal group in the $x_{1}$-$x_{2}$ plane. Note that $\mathbf H$ is a subgroup of $\mathbf O(3)$, and $\mathbf H^+$ is a subgroup of $  \mathbf S\mathbf O(3)$.
For any function $f:\mathbb S^2\mapsto\mathbb R,$ define
 \begin{equation}\label{hy}
 \mathcal H_f:=\{f\circ \mathsf g\mid \mathsf g\in\mathbf H\},
 \end{equation}
 \begin{equation}\label{hyp}
 \mathcal H^{+}_f:=\{f\circ \mathsf g\mid \mathsf g\in\mathbf H^+\},
 \end{equation}
If $f=f(\varphi,\theta)$ is given in spherical coordinates, then it is easy to see that
 \begin{equation}\label{hy2}
 \mathcal H_f=\{f(\pm\varphi+\beta,\theta)\mid \beta\in\mathbb R\},
 \end{equation}
 \begin{equation}\label{hyp2}
 \mathcal H^{+}_f=\{f(\varphi+\beta,\theta)\mid \beta\in\mathbb R\},
 \end{equation}
 Intuitively, $\mathcal H^+_{f}$ is the set of all rotations of $f$ about the polar axis.
 From \eqref{hy2}-\eqref{hyn2} and the expressions of degree-1 spherical harmonics, it is easy to check that
  \begin{equation}\label{y1eq}
  \mathcal H_Y=\mathcal H^+_Y,  \quad \forall\,Y\in\mathbb E_1.
 \end{equation}

 Let $\mathsf m$ be the measure on $\mathbb S^2$ associated with the standard metric.  Given an $\mathsf m$-measurable  function $f:\mathbb S^2\mapsto\mathbb R,$ denote by $\mathcal R_{f}$ the set of  rearrangements of $f$ on $\mathbb S^2$ with respect to the measure $\mathsf m$, i.e.,
 \begin{equation}\label{uu01}
 \mathcal R_{f}=\left\{ g:\mathbb S^{2}\mapsto\mathbb R\mid \mathsf m\left(\{\mathbf  x\in\mathbb S^2\mid g(\mathbf   x)>s\}\right)=\mathsf m\left(\{\mathbf   x\in\mathbb S^2\mid f(\mathbf   x)>s\}\right)\,\,\forall\,s\in\mathbb R\right\} .
 \end{equation}

\subsection{Euler equation on a rotating sphere}\label{sec22}
Consider the Euler equation of an ideal fluid of unit density on  $\mathbb S^2$  rotating with speed $\omega$ about the polar axis (cf. \cite{CG,KBH,T}):
   \begin{equation}\label{e0}
\begin{cases}
\partial_t\mathbf v+\nabla_{\mathbf v}\mathbf v+2\omega\sin\theta J\mathbf v  =-\nabla P,\\
{\rm div}\mathbf v=0.
\end{cases}
\end{equation}
where $\mathbf v=v_\varphi\mathbf e_{\varphi}+v_\theta\mathbf e_\theta$ is the velocity field, $P$ is the scalar pressure, and $J$  denotes the anticlockwise  rotation through $\pi/2$ in the tangent space as in \eqref{fla4},
\[J\mathbf v=-v_\theta\mathbf e_{\varphi}+v_\varphi\mathbf e_\theta.\]

By standard energy method \cite{MB} and an analogue of the Beale-Kato-Majda criterion \cite{BKM},  global well-posedness of the Euler equation \eqref{e0} holds in $H^s$ for any $s>2$. See, for example,   Taylor \cite{T}, Section \ref{sec2} for a detailed proof. In particular, it makes sense to talk about smooth solutions of \eqref{e0}.

 The vorticity-stream function formulation of the Euler equation \eqref{e0} is used in this paper. Since $\mathbf v$ is divergence-free, there is a scalar function $\psi$, called the \emph{stream function}, such that
\begin{equation}\label{jna1}
\mathbf v=J\nabla \psi.
\end{equation}
The \emph{relative vorticity} $\Omega$ and the \emph{absolute vorticity} $\zeta$ are defined by
\[\Omega:=-{\rm div}(J\mathbf v),\quad\zeta:=\Omega+2\omega\sin\theta.\]
Note that  $\Omega$ and $\zeta$ are of zero mean automatically. Applying the operator $-{\rm div}J$ to the first equation of \eqref{e0},  using the formulas \eqref{fla1}-\eqref{fla3},  and taking into the divergence-free condition ${\rm div}\mathbf v$=0, we obtain
\begin{equation}\label{omeg0}
\partial_t\Omega+\mathbf v\cdot\nabla( \Omega+2\omega\sin\theta)=0.
\end{equation}
At the level of absolute vorticity,
  the equation \eqref{omeg0} becomes
\[
\partial_t\zeta+ \mathbf v\cdot\nabla\zeta=0.
\]
On the other hand, by \eqref{jna1} and the definition of $\Omega$ we see that $\psi$ and $\Omega$ satisfy the Poisson equation
$\Delta\psi=\Omega.$
Throughout this paper, we always assume that the stream function $\psi$ has a zero mean, which can be achieved by adding a suitable constant to it without changing \eqref{jna1}. Recalling the operator $\mathcal G$  given by \eqref{grop}, we have that
\begin{equation}\label{gsint2}
\psi=-\mathcal G\Omega=-\mathcal G(\zeta-2\omega\sin\theta)=\omega\sin\theta-\mathcal G\zeta.
\end{equation}
Here we have used \eqref{gsint}.
Therefore the velocity field $\mathbf v$ can be expressed in terms of $\zeta$ via the following Biot-Savart law:
\[\mathbf v=J\nabla \psi=J\nabla( \omega\sin\theta-\mathcal G\zeta).\]
 To conclude, we have obtained the following vorticity equation for $\zeta$:
\begin{equation}\label{ave}
\partial_t\zeta+J\nabla( \omega\sin\theta-\mathcal G\zeta)\cdot\nabla\zeta=0. \tag{$V_{\omega}$}
\end{equation}
This is a first-order nonlinear transport equation on $\mathbb S^2.$

Note that $(V_{0})$ is invariant under three-dimensional rigid rotations, and \eqref{ave} is invariant under  three-dimensional rigid rotations about the polar axis for general $\omega\in\mathbb R$. Also note that 
\begin{equation}\label{iffsl}
\mbox{$\zeta(\varphi,\theta,t)$ solves $(V_{0})$ if and only if $\zeta(\varphi+\omega t,\theta,t)$ solves \eqref{ave}.}
\end{equation}
 See \cite{CG}, Section 2.3.

In the rest of this paper, we will mainly be focusing on this equation rather than its primitive form \eqref{e0}.

\subsection{Steady solutions and RH  waves}\label{sec23}
 Steady solutions  of the Euler equation are characterized   by having $\nabla\psi$ and $\nabla(\Delta \psi+2\omega\sin\theta)$ parallel.
In particular, this holds when  $\psi$ is zonal, i.e., $\psi$ depends only on $\theta$. For some nonlinear stability results of zonal flows, we refer the interested reader to \cite{Cap,CG,T} and the references therein.

Except for zonal solutions, steady solutions can also be obtained by considering the following semilinear elliptic equation for the stream function $\psi$:
\begin{equation}\label{semm}
\Delta\psi+2\omega\sin\theta=\mathfrak g(\psi),
\end{equation}
where $\mathfrak g\in C^1(\mathbb R)$. In terms of the absolute vorticity,   \eqref{semm} can be written as
\[\zeta=\mathfrak g(\omega\sin\theta-\mathcal G\zeta).\]
Here we have used \eqref{gsint2}. Recently,  some rigidity results on steady solutions related to the elliptic equation \eqref{semm} were proved by Constantin and Germain (\cite{CG}, Theorem 4).

Now we introduce the Rossby-Haurwitz (RH) waves. Consider
 \begin{equation}\label{drhw}
 \zeta(\varphi,\theta,t)=\alpha\sin\theta+Y(\varphi-ct,\theta),\quad Y\in \mathbb E_j,\,\,Y\neq 0.
 \end{equation}
 By direct calculations, one can check that if $\alpha, \omega$ and $c$ satisfy the relation
 \begin{equation}\label{cao}
 c=\alpha\left(\frac{1}{2}- \frac{1}{j(j+1)}\right)- \omega,
 \end{equation}
 then $\zeta$ given by \eqref{drhw} solves the     vorticity equation \eqref{ave}.
 We call such a solution $\zeta$ an  \emph{Rossby-Haurwitz wave}  of degree $j$.
 %From \eqref{drhw}, using rotational invariance of $(V_0)$ and the property \eqref{iffsl}, one can obtain a class of solutions of  \eqref{ave} that are slightly more general than the RH waves, which we call generalized RH waves. See Appendix \ref{appc}.
It is easy to see that an  RH wave of the form \eqref{drhw} is steady if and only if $Y$ depends only on $\theta$, or $\alpha, \omega$ satisfy the relation
\[\omega=\alpha\left(\frac{1}{2}- \frac{1}{j(j+1)}\right).\]
For example,
a degree-1 RH wave  has the form
   \begin{equation}\label{dg1rh}
   \zeta(\varphi,\theta,t)=Y(\varphi+\omega t,\theta),\quad Y\in \mathbb E_1,\,\,Y\neq 0,
   \end{equation}
  which is steady if and only if
 $\omega=0$, or $Y$ depends only on $\theta$;
A degree-2 RH wave has the form
   \begin{equation}\label{dg2rh}
   \zeta(\varphi,\theta,t)=\alpha\sin\theta+Y(\varphi-ct,\theta),\quad Y\in \mathbb E_2,\,\,Y\neq 0,\,\,c=\frac{1}{3}\alpha-\omega,
   \end{equation}
 which is steady if and only if $\alpha=3\omega,$ or  $Y$ depends only on $\theta$. We refer the interested reader to \cite{Nualart}
for some recent results on the structure of the set of steady solutions   near a  zonal RH wave  of degree 1 or 2.

Given $\alpha\in\mathbb R$ and $Y\in\mathbb E_j,$ we call $\alpha\sin\theta+Y$ an \emph{RH state} of degree $j$.
In view of  \eqref{drhw},  the  set of all rotations of the RH state $\alpha\sin\theta+Y$ about the polar axis, denoted by $\mathcal H^+_{\alpha\sin\theta+Y}$  according to \eqref{hyp},
is an invariant set  of  the vorticity equation \eqref{ave}. More precisely, for any smooth solution $\zeta$ of \eqref{ave},
\begin{equation}\label{rhorbt}
 \zeta_0\in \mathcal H^+_{\alpha\sin\theta+Y}\quad \Longrightarrow \quad \zeta_t\in \mathcal H^+_{\alpha\sin\theta+Y}\,\,\forall\,t>0.
\end{equation}
Here $\zeta_t$ is the abbreviation of $\zeta(\cdot,t).$  For this reason, we call $\mathcal H^+_{\alpha\sin\theta+Y}$ an  \emph{RH orbit}.

 \subsection{Conservation laws}\label{sec24}
In the study of nonlinear stability of equilibria of non-dissipative systems,
 conservation laws usually play an important role.
For the Euler equation on a rotating sphere, there are three basic conservation laws:
\begin{itemize}
    \item [(i)] The kinetic energy of the fluid is conserved, i.e.,
    \begin{equation}\label{cc1}
    \frac{1}{2}\int_{\mathbb S^{2}}|\mathbf v_t|^{2}d\sigma=\frac{1}{2}\int_{\mathbb S^{2}}|\mathbf v_0|^{2}d\sigma,\quad\forall\,t>0.
    \end{equation}
    \item [(ii)]  The distribution function of the absolute vorticity $\zeta$ is conserved, i.e.,
       \begin{equation}\label{cc2}
 \zeta_{t}\in\mathcal R_{\zeta_{0}},\quad \forall\,t>0.
    \end{equation}
    See \eqref{uu01} for the definition of $\mathcal R_{\zeta_{0}}$.
       \item [(iii)] The quantities $e^{im\omega t}c_{1}^{m}(t),$ $m=-1,0,1,$ are conserved, i.e.,
            \begin{equation}\label{cc3}
                   e^{im\omega t}c_{1}^{m}(t)=  c_{1}^{m}(0), \quad \forall\,t>0,\,m=-1,0,1,
      \end{equation}
       where $c_{1}^{m}(t)$  is the Fourier coefficient  of the absolute vorticity $\zeta$ associated with the spherical harmonic $Y_{1}^{m}$, i.e.,
              \begin{equation}\label{cc32}
              c_{1}^{m}=\int_{\mathbb S^{2}}\zeta_{t}\overline{Y_{1}^{m}}d\sigma,\quad m=-1,0,1.
      \end{equation}

       \end{itemize}
For detailed proofs of the conservation laws \eqref{cc1}-\eqref{cc3},  we refer the reader to \cite{CG}, Section 2.4 (therein the conservation law \eqref{cc3} is given in terms of the relative vorticity).

Let $\zeta$ be a smooth solution to the vorticity equation \eqref{ave}.  Then $\zeta_t$ is of zero mean for any  $t>0$, thus can be expanded in terms of the basis $\{Y_j^m\}_{j\geq 1,|m|\leq j}$ as follows:
\begin{equation}\label{expd}
\zeta_t =\sum_{j\geq 1}\sum_{|m|\le j}c_j^m(t)Y_j^m,\quad c_j^m(t)=\int_{\mathbb S^2}\zeta_t\overline{ Y_j^m} d\sigma
\end{equation}
One can check that the three conservation laws \eqref{cc1}-\eqref{cc3}  can be equivalently expressed as follows:
\begin{equation}\label{cl1}
\sum_{j\geq 1}\sum_{|m|\le j}\frac{|c_j^m(t)|^2}{j(j+1)}=\sum_{j\geq 1}\sum_{|m|\le j}\frac{|c_j^m(0)|^2}{j(j+1)},\quad\forall\,t>0,
\end{equation}
\begin{equation}\label{cl2}
     {\zeta_t}\in\mathcal R_{\zeta_0},\quad\forall\,t>0.
 \end{equation}
\begin{equation}\label{cl3}
       e^{ -i\omega t}c_1^{ -1}(t)=c_1^{-1}(0),\quad  e^{i\omega t}c_1^{ 1}(t)=c_1^1(0),\quad  c_1^0(t)=c_1^0(0), \quad\forall\,t>0.
\end{equation}
The  conservation laws \eqref{cl1}-\eqref{cl3} will be frequently used in this paper.
Note that a straightforward corollary of \eqref{cl2} is   that
 \begin{equation}\label{cl4}
 \sum_{j\geq 1}\sum_{|m|\le j} |c_j^m(t)|^2= \sum_{j\geq 1}\sum_{|m|\le j} |c_j^m(0)|^2,\quad\forall\,t>0.
\end{equation}

  %  It is easy to check that the following quantities
   % \begin{equation}\label{cc4}
     %    |c_1^{-1}|,\quad |c_1^1|,\quad c_1^0,
    %  \end{equation}
   %  \begin{equation}\label{cc5}
     %     \sum_{j\geq 2}\sum_{|m|\le j} |c_j^m|^2,
    %  \end{equation}
    % \begin{equation}\label{cc6}
    %     \sum_{j\geq 2}\sum_{|m|\le j} \frac{|c_j^m|^2}{j(j+1)},
    %  \end{equation}
     %    \begin{equation}\label{cc6}
      %   \sum_{j\geq 3}\sum_{|m|\le j}  % \left(\frac{1}{6}-\frac{1}{j(j+1)}\right) {|c_j^m|^2},
    %  \end{equation}
    %  are  conserved. Moreover, if $\omega=0,$ then $c^{\pm1}_1$ are also conserved.

As an application of the conservation laws \eqref{cl3} and \eqref{cl4}, we can
easily prove the stability of degree-1 RH waves in  $L^2$ norm of the absolute vorticity.
Consider a smooth solution $\zeta$ to the vorticity equation \eqref{ave} with the expansion \eqref{expd}.
Given $Y\in\mathbb E_1$ with
 \[Y=a Y_1^0+bY_1^1+cY_1^{-1}\in\mathbb E_1,\quad b=-\overline c,\]
it is clear that
\begin{equation}\label{inc1}
    \|\zeta_0-Y\|_{L^2(\mathbb S^2)}^2=| c_1^0(0)-a|^2+|c_1^{1}(0)-b|^2+|c_1^{-1}(0)-c|^2+\sum_{j\geq 2}\sum_{|m|\leq j}|c_j^m(0)|^2.
\end{equation}
By the conservation laws \eqref{cl3} and \eqref{cl4}, it holds for any $t>0$ that
  \begin{equation}\label{inc10}
  c_1^0(t)= c_1^0(0),\quad  e^{\pm i\omega t}c_1^{\pm 1}(t)= c_1^{\pm1}(0),\quad \sum_{j\geq 2}\sum_{|m|\le j} |c_j^m(t)|^2=\sum_{j\geq 2}\sum_{|m|\le j} |c_j^m(0)|^2.
  \end{equation}
From \eqref{inc1} and \eqref{inc10}, we have that
\begin{equation}\label{inc20}
| c_1^0(t)-a|^2+|c_1^{1}(t)-e^{-i\omega t}b|^2+|c_1^{-1}(t)-e^{i\omega t}c|^2+\sum_{j\geq 2}\sum_{|m|\leq j}|c_j^m(t)|^2= \|\zeta_0-Y\|_{L^2(\mathbb S^2)}^2.
\end{equation}
Denote
\[Y^t=aY_1^0+e^{-i\omega t}b Y_1^{1}+e^{i\omega t}c Y_1^{-1}.\]
Then \eqref{inc20} can be written as
\begin{equation}\label{inc27}
  \|\zeta_0-Y\|_{L^2(\mathbb S^2)}= \|\zeta_t-Y^t\|_{L^2(\mathbb S^2)}.
\end{equation}
Since $b=-\overline c,$ we can apply Lemma \ref{ce1} in Appendix \ref{appb} to deduce that  $Y^t$ is in fact a rotation of $Y$ about the polar axis, or $Y^t\in\mathcal H^+_Y$ according to the notation \eqref{hyp2}. From \eqref{inc27}, we immediately see that the RH orbit $\mathcal H^+_{Y}$ is stable in $L^2$ norm of the absolute vorticity, i.e.,  for
 any smooth solution $\zeta$ of the vorticity equation \eqref{ave},
 \begin{equation*}
 \min_{f\in \mathcal H^+_{Y} }\|f-\zeta_0\|_{L^2(\mathbb S^2)}<<1 \quad\Longrightarrow\quad \min_{f\in \mathcal H^+_{Y}}\|f-\zeta_t\|_{L^2(\mathbb S^2)}<<1\,\,\,\,\forall\,t>0.
 \end{equation*}
 If additionally $\omega=0,$ then $Y^{t}=Y$ for any $t>0,$ hence  \eqref{inc27} implies the stability of $Y$  in $L^2$ norm of the absolute vorticity, i.e.,
  for
 any smooth solution $\zeta$ of the vorticity equation \eqref{ave},
 \begin{equation*}
 \|\zeta_0-Y\|_{L^2(\mathbb S^2)}<<1 \quad\Longrightarrow\quad  \|\zeta_t-Y\|_{L^2(\mathbb S^2)}<<1\,\,\,\,\forall\,t>0.
 \end{equation*}

%Note that when $\omega\neq 0$ (non-steady) or $\beta,\gamma\neq 0$ (non-zonal) the above theorem is sharp due to the existence of degree-1 RH travelling waves.

\subsection{Main results}\label{sec25}
The main purpose of this paper is to investigate the nonlinear stability of
the  degree-2 RH waves. Consider the following degree-2 RH state at the absolute vorticity level:
\[\alpha\sin\theta+Y,\quad \alpha\in\mathbb R,\,\,  Y\in\mathbb E_2.\]
Our first result deals with the case  $\alpha\neq 0.$

     \begin{theorem}[Stability of degree-2 RH waves: $\alpha\neq 0$]\label{thm1}
Let $1<p<+\infty$.   Let $\alpha\in\mathbb R$, $Y\in\mathbb E_2$ such that   $\alpha\neq 0,$ $Y\neq 0$. Then the RH orbit $\mathcal H^+_{\alpha\sin\theta+Y}$ (defined according to  \eqref{hyp} or \eqref{hyp2}) is stable in $L^p$ norm of the absolute vorticity. More precisely, for  any $\varepsilon>0,$ there exists $\delta>0$, such that
 for any smooth solution $\zeta$ of the vorticity equation \eqref{ave},
 \begin{equation}\label{hrb1}
 \min_{f\in \mathcal H^+_{\alpha\sin\theta+Y} }\|f-\zeta_0\|_{L^p(\mathbb S^2)}<\delta\quad\Longrightarrow\quad \min_{f\in \mathcal H^+_{\alpha\sin\theta+Y}}\|f-\zeta_t\|_{L^p(\mathbb S^2)}<\varepsilon\,\,\forall\,t>0.
 \end{equation}
    \end{theorem}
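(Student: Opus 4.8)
The plan is to derive Theorem~\ref{thm1} from the abstract stability criterion of Theorem~\ref{stac}, by displaying the RH orbit $\mathcal H^+_{\alpha\sin\theta+Y}$ as an isolated set of maximizers of a conserved functional over a suitable invariant set and then checking compactness of maximizing sequences. Write $\zeta^*=\alpha\sin\theta+Y$ and take as functional the reduced kinetic energy
\[
\mathcal E[\zeta]:=\frac12\int_{\mathbb S^2}\zeta\,\mathcal G\zeta\,d\sigma=\frac12\sum_{j\ge1}\sum_{|m|\le j}\frac{|c_j^m|^2}{j(j+1)},
\]
which is conserved by \eqref{cl1} and, since $\mathcal G$ maps $\mathring L^p(\mathbb S^2)$ into $\mathring W^{2,p}\hookrightarrow C^0$ by Lemma~\ref{lpt}, is well defined for every $1<p<+\infty$. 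As invariant set I would take
\[
\mathcal K:=\big\{\zeta\in\mathcal R_{\zeta^*}\ :\ \text{the }\mathbb E_1\text{-projection of }\zeta\text{ equals }\alpha\sin\theta\big\}.
\]
This set is invariant: $\mathcal R_{\zeta^*}$ is preserved by \eqref{cl2}, while \eqref{cl3} propagates the value of $c_1^0$ and keeps $c_1^{\pm1}$ equal to zero in time (these coefficients only rotate, so vanishing is preserved). The decisive algebraic point is that on $\mathcal K$ the $\mathbb E_1$-part of $\mathcal E$ is frozen and $\|\zeta\|_{L^2}$ is fixed, so maximizing $\mathcal E$ over $\mathcal K$ reduces to maximizing $\sum_{j\ge2}|c_j^m|^2/(j(j+1))$ under the constraint that $\sum_{j\ge2}|c_j^m|^2$ is a constant.

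Since $1/(j(j+1))\le 1/6$ for all $j\ge2$, with equality only at $j=2$, one has
\[
\sum_{j\ge2}\sum_{|m|\le j}\frac{|c_j^m|^2}{j(j+1)}\ \le\ \frac16\sum_{j\ge2}\sum_{|m|\le j}|c_j^m|^2,
\]
with equality precisely when $\zeta$ has no nonzero Fourier mode of degree $\ge3$. As $\zeta^*$ puts all of its degree-$\ge2$ content in $\mathbb E_2$, it attains the maximum, and every maximizer of $\mathcal E$ over $\mathcal K$ is of the form $\alpha\sin\theta+\tilde Y$ with $\tilde Y\in\mathbb E_2$ and $\alpha\sin\theta+\tilde Y\in\mathcal R_{\zeta^*}$. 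This is the clean part of the variational characterization.

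The crux is then the rigidity statement that such maximizers are exactly the rotations of $\zeta^*$ about the polar axis, i.e. $\tilde Y\in\mathcal H^+_Y$, so that the maximizer set coincides with (or, allowing for discrete reflection copies, contains as an isolated component) the RH orbit $\mathcal H^+_{\zeta^*}$. This is where I expect the main difficulty, and where the hypothesis $\alpha\ne0$ is essential. Identifying $\mathbb E_2$ with traceless symmetric forms $\mathbf x\mapsto\mathbf x^\top A\mathbf x$, the relation $\alpha\sin\theta+\tilde Y\in\mathcal R_{\alpha\sin\theta+Y}$ equates all the moments $\int_{\mathbb S^2}(\alpha x_3+\mathbf x^\top A\mathbf x)^k\,d\sigma$. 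For $\alpha\ne0$ the mixed terms (for instance $\int x_3^2\,\tilde Y\propto\tilde A_{33}$ and $\int x_3\,\tilde Y^2$) couple the orientation of $\tilde A$ relative to the $x_3$-axis to these conserved moments, forcing $\tilde A$ to agree with $A$ up to a rotation fixing the polar axis; when $\alpha=0$ only the $\mathbf{SO}(3)$-invariants (the eigenvalues of $A$) are constrained, which is exactly why the companion result must allow the full group $\mathbf{SO}(3)$. Carrying out this classification of degree-$2$ harmonics up to rearrangement, and tracking the precise role of $\alpha$, is the principal algebraic obstacle.

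For compactness I would invoke the rearrangement machinery of Section~\ref{sec3}: a maximizing sequence is bounded in every $L^p$ and, its distribution being fixed, equi-integrable, hence has a weak-$L^p$ limit; since $\mathcal G$ is smoothing, $\mathcal E$ is weakly sequentially continuous, so the weak limit is again a maximizer, the degree-$\ge3$ content must vanish, and the rearrangement theory of Section~\ref{sec3} identifies the limit as an element of $\mathcal R_{\zeta^*}$ while upgrading the convergence to strong $L^p$. Finally, to pass from the constrained problem on $\mathcal K$ to a genuine perturbation $\zeta_0$ (whose conserved data are only close to, not equal to, those of $\zeta^*$), I would feed the variational characterization and the compactness into Theorem~\ref{stac}: the conservation laws \eqref{cl1}--\eqref{cl3} ensure that $\zeta_t$ is, uniformly in $t$, an almost-maximizer of $\mathcal E$ on a rearrangement class with $\mathbb E_1$-projection close to $\alpha\sin\theta$, and the isolatedness of the maximizer set together with compactness then keeps $\min_{f\in\mathcal H^+_{\zeta^*}}\|f-\zeta_t\|_{L^p(\mathbb S^2)}$ small. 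The contradiction/continuity argument packaged in Theorem~\ref{stac} along a putative escaping sequence yields \eqref{hrb1}.
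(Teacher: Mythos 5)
Your overall strategy (variational characterization via Theorem \ref{stac}, using energy plus first-eigenspace information, then compactness) is the same as the paper's, but two of your steps have genuine gaps.

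First, the constrained set $\mathcal K=\{\zeta\in\mathcal R_{\zeta^*}:\mathbb P_1\zeta=\alpha\sin\theta\}$ is incompatible with Theorem \ref{stac}. That criterion is formulated for maximization of a conserved functional over a \emph{full} rearrangement class $\mathcal R$, and its proof hinges on the ``follower'' construction: the comparison functions $\upsilon^n_t=f\circ\Psi^n(\cdot,t)$ lie in $\mathcal R_f$ but their $\mathbb E_1$-projections are not controlled, so they leave $\mathcal K$; likewise a genuine perturbation $\zeta_0$ has $\mathbb P_1\zeta_0$ only \emph{close} to $\alpha\sin\theta$, and by \eqref{cl3} it never enters $\mathcal K$ at any later time. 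So the constrained characterization cannot simply be ``fed into'' Theorem \ref{stac} as you propose in your last paragraph. Note also that your constraint is not a cosmetic choice: with the pure kinetic energy $\frac12\int\zeta\,\mathcal G\zeta\,d\sigma$, the unconstrained maximum over $\mathcal R_{\zeta^*}$ is \emph{not} attained near $\zeta^*$ (the energy rewards pushing $L^2$ mass into $\mathbb E_1$, where the eigenvalue weight is $1/2$ rather than $1/6$), so you genuinely need the constraint, and that is exactly what breaks the criterion. The paper resolves this by modifying the functional instead of the admissible set: it maximizes
\begin{equation*}
\mathcal E(f)=\frac{1}{2}\int_{\mathbb S^2}f\mathcal Gf\,d\sigma-\frac{1}{4}\sum_{m=-1,0,1}\Bigl|\int_{\mathbb S^2}f\overline{Y_1^m}\,d\sigma\Bigr|^2+\frac{\alpha}{6}\int_{\mathbb S^2}f\sin\theta\,d\sigma
\end{equation*}
over all of $\mathcal R_{\alpha\sin\theta+Y}$; the quadratic correction cancels the $j=1$ contribution of the energy, the linear term completes a square in $c_1^0$, both corrections are conserved by \eqref{cl3}, and the maximizer set comes out to be exactly $\mathcal R_{\alpha\sin\theta+Y}\cap(\alpha\sin\theta+\mathbb E_2)$ — the same set your constrained problem produces, but now in a form Theorem \ref{stac} accepts.

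Second, your rigidity step asserts that the moment conditions force $\tilde A$ to agree with $A$ up to a rotation fixing the polar axis, i.e.\ that the maximizer set coincides with $\mathcal H_{\alpha\sin\theta+Y}$. This is precisely the statement the paper explicitly leaves as a conjecture (see the remark after Proposition \ref{yc088}), noting that establishing it ``may be a very difficult task.'' What the paper actually proves is strictly weaker and suffices: $\mathcal H_{\alpha\sin\theta+Y}$ is \emph{isolated} in the maximizer set $\mathcal M$, which is exactly the hypothesis Theorem \ref{stac} needs for a proper subset $\mathcal N\subsetneqq\mathcal M$. Even this weaker statement costs real work: the paper computes the moments $I_2,\dots,I_7$ of $\alpha\sin\theta+Y$ (with Maple), reduces them to the polynomial system \eqref{abcde1}--\eqref{abcde2} in the invariants $(a,u,v,w)$, and proves in Lemma \ref{aps1} that two distinct solutions must have $a$-coordinates separated by a definite amount; here $\alpha\neq0$ enters quantitatively (the system degenerates when $\alpha=0$). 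Your sketch gestures at the right mechanism (mixed moments coupling the orientation of $\tilde A$ to the $x_3$-axis when $\alpha\neq0$), but supplies no argument, and the statement you need it to yield is an open problem, not a routine computation. A complete proof along your lines would have to either prove that conjecture or retreat, as the paper does, to a quantitative isolatedness statement extracted from finitely many moment identities.
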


The conclusion of Theorem \ref{thm1} can also be equivalently stated as follows: for  any $\varepsilon>0,$ there exists $\delta>0$, such that
 for any smooth solution $\zeta$ of the vorticity equation \eqref{ave},  if
 \begin{equation*}
 \|\zeta_0-(\alpha\sin\theta+Y)\|_{L^p(\mathbb S^2)}<\delta,
 \end{equation*}
 then at any time $t>0$, there exists $Y^{t}\in \mathcal H^+_{Y}$ such that
   \begin{equation*}
 \|\zeta_t-(\alpha\sin\theta+Y^{t})\|_{L^p(\mathbb S^2)}<\varepsilon.
 \end{equation*}

A straightforward corollary of Theorem \ref{thm1} is the following stability result for degree-2 zonal RH waves.

      \begin{corollary}[Stability of degree-2 zonal RH waves: $\alpha\neq 0$]\label{thm10}
Let $1<p<+\infty$, and let $\alpha$, $\beta$ be non-zero real numbers. Then  the degree-2 RH wave  $\alpha\sin\theta+\beta Y_2^0$  is   stable in $L^p$ norm of the absolute vorticity. More precisely, for  any $\varepsilon>0,$ there exists $\delta>0$, such that
 for any smooth solution $\zeta$ of the vorticity equation \eqref{ave},
 \begin{equation*}
  \| \zeta_0-(\alpha\sin\theta+\beta Y_2^0)\|_{L^p(\mathbb S^2)}<\delta\quad\Longrightarrow\quad  \| \zeta_t-(\alpha\sin\theta+\beta Y_2^0)\|_{L^p(\mathbb S^2)}<\varepsilon\,\,\forall\,t>0.
 \end{equation*}
    \end{corollary}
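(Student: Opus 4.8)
The plan is to obtain Corollary \ref{thm10} as an immediate specialization of Theorem \ref{thm1}, the point being that for a \emph{zonal} RH state the RH orbit degenerates to a single function, so that the orbital stability of Theorem \ref{thm1} becomes honest (non-orbital) stability of the state itself.

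Setting $Y:=\beta Y_2^0$, I would first check that the hypotheses of Theorem \ref{thm1} are met: $Y\in\mathbb E_2$, and since $\beta\neq 0$ we have $Y\neq 0$, while $\alpha\neq 0$ by assumption. The key observation is that $\alpha\sin\theta+\beta Y_2^0$ is zonal. Indeed, from the explicit formula $Y_2^0=\sqrt{5/16\pi}\,(3\sin^2\theta-1)$, both $\sin\theta$ and $Y_2^0$ depend only on the latitude $\theta$. Hence, recalling the description \eqref{hyp2} of the orbit under rotations about the polar axis, namely $(\varphi,\theta)\mapsto(\varphi+s,\theta)$ for $s\in\mathbb R$, the state $\alpha\sin\theta+\beta Y_2^0$ is fixed by every such rotation, and therefore
\[
\mathcal H^+_{\alpha\sin\theta+\beta Y_2^0}=\{\alpha\sin\theta+\beta Y_2^0\}.
\]

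Once the orbit is reduced to this single point, the two minima over $f\in\mathcal H^+_{\alpha\sin\theta+\beta Y_2^0}$ appearing in \eqref{hrb1} collapse to the plain $L^p$ distances from $\zeta_0$ and $\zeta_t$ to $\alpha\sin\theta+\beta Y_2^0$ itself. Thus both the hypothesis and the conclusion of Theorem \ref{thm1} specialize verbatim to those of Corollary \ref{thm10}, and the result follows. I do not expect a genuine obstacle at this stage: all the substantive work is contained in Theorem \ref{thm1}, and the corollary is merely the observation that the zonal symmetry of $\alpha\sin\theta+\beta Y_2^0$ removes the rotational degeneracy inherent in the general RH orbit.
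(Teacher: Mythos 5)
Your proposal is correct and coincides with the paper's intended argument: the paper itself presents Corollary \ref{thm10} as a straightforward specialization of Theorem \ref{thm1}, resting precisely on the observation that $\alpha\sin\theta+\beta Y_2^0$ is zonal, so its orbit $\mathcal H^+_{\alpha\sin\theta+\beta Y_2^0}$ under rotations about the polar axis is the singleton $\{\alpha\sin\theta+\beta Y_2^0\}$ and the minima in \eqref{hrb1} collapse to plain $L^p$ distances. No gap to report.
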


   For  the case $\alpha=0$,  we  can only prove stability up to transformations in  $\mathbf S\mathbf O(3).$

 \begin{theorem}[Stability of degree-2 RH waves: $\alpha=0$]\label{thm2}
Let $1<p<+\infty,$ and let $Y\in\mathbb E_2$ such that $Y\neq 0.$ Then the set  $\mathcal O^+_{Y}$ (defined by  \eqref{oyp}) is  stable in $L^p$ norm of the absolute vorticity. More precisely, for  any $\varepsilon>0,$ there exists $\delta>0$, such that
 for any smooth solution $\zeta$ of the vorticity equation \eqref{ave},
 \begin{equation}\label{hrb2}
 \min_{f\in\mathcal O^+_{ Y}}\|f-\zeta_0\|_{L^p(\mathbb S^2)}<\delta\quad\Longrightarrow\quad \min_{f\in\mathcal O^+_{Y}}\|f-\zeta_t\|_{L^p(\mathbb S^2)}<\varepsilon\,\,\forall\,t>0.
 \end{equation}
 \end{theorem}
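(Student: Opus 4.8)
The plan is to realize $\mathcal O^+_Y$ as the \emph{full} set of maximizers of a conserved energy functional, restricted to the rearrangement class of $Y$ together with the dynamically preserved constraint that the projection onto $\mathbb E_1$ vanishes, and then to conclude via the stability criterion of Theorem \ref{stac}. Concretely, I would work with
\[E(\zeta):=\tfrac12\int_{\mathbb S^2}\zeta\,\mathcal G\zeta\,d\sigma=\tfrac12\sum_{j\ge1}\sum_{|m|\le j}\frac{|c_j^m|^2}{j(j+1)},\]
which is conserved along smooth solutions by \eqref{cl1}. I would also use that the rearrangement class is preserved \eqref{cl2} and that the degree-one content $\|P_1\zeta_t\|_{L^2}^2=|c_1^0|^2+2|c_1^1|^2$ is conserved by \eqref{cl3}, where $P_j$ denotes the $L^2$-projection onto $\mathbb E_j$. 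Since $Y\in\mathbb E_2$ has $P_1 Y=0$, this last conservation law is exactly what allows a nearby perturbation to retain a small degree-one part for all time.

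For the variational characterization I would maximize $E$ over $\mathcal S:=\{g\in\mathcal R_Y:\ P_1 g=0\}$. For $g\in\mathcal S$ one has $\|g\|_{L^2}=\|Y\|_{L^2}$, and since $\tfrac{1}{j(j+1)}\le\tfrac16$ for $j\ge2$ with equality only at $j=2$,
\[E(g)=\tfrac12\sum_{j\ge2}\frac{\|P_jg\|_{L^2}^2}{j(j+1)}\le\tfrac1{12}\sum_{j\ge2}\|P_jg\|_{L^2}^2=\tfrac1{12}\|Y\|_{L^2}^2=E(Y),\]
with equality precisely when $g\in\mathbb E_2$. Hence the maximizers of $E$ over $\mathcal S$ are exactly $\mathbb E_2\cap\mathcal R_Y$, and the crux becomes the rigidity identification $\mathbb E_2\cap\mathcal R_Y=\mathcal O^+_Y$. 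I would prove this by writing each $g\in\mathbb E_2$ as the restriction to $\mathbb S^2$ of $\mathbf x\mapsto\mathbf x^\top A\mathbf x$ for a unique symmetric traceless $3\times3$ matrix $A$; rotations act by $A\mapsto R^\top A R$, the distribution function of $g$ depends only on the unordered eigenvalues of $A$, and these are determined by $\int g^2$ and $\int g^3$ (equivalently by $\mathrm{tr}\,A^2$ and $\det A$). Thus two degree-two harmonics are rearrangements of one another iff their matrices are orthogonally similar, and the similarity may be taken in $\mathbf{SO}(3)$ upon replacing $Q$ by $-Q$ if necessary, giving $\mathbb E_2\cap\mathcal R_Y=\mathcal O^+_Y$. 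I expect this rigidity statement to be the main obstacle: it is what pins the maximizer set down to exactly the orbit we want, and it has no analogue in general degree.

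For the compactness I would take any sequence $g_n\in\mathcal R_{f_n}$ with $f_n\to Y$ in $L^p$, $E(g_n)\to E(Y)$ and $P_1 g_n\to0$. Boundedness of $\{g_n\}$ in the reflexive space $L^p$ ($1<p<+\infty$) gives a weak limit $g$ along a subsequence; since $\mathcal G:\mathring L^p\to\mathring W^{2,p}$ is compact (Lemma \ref{lpt} together with Rellich), $E$ is sequentially weakly continuous, so $E(g)=E(Y)$, and $P_1 g=\lim P_1 g_n=0$. The Hardy--Littlewood--P\'olya majorization $g\prec Y$ passes to the weak limit, as it only tests against the linear-growth convex functions $(\,\cdot\,-s)_+$, which are continuous on $L^p$; in particular $\int g^2\le\int Y^2$, so $g\in L^2$. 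The spectral inequality above then forces $g\in\mathbb E_2$ with $\|g\|_{L^2}=\|Y\|_{L^2}$, and equality in the strictly convex majorization bound $\int g^2\le\int Y^2$ upgrades $g\prec Y$ to $g\in\mathcal R_Y$, whence $g\in\mathbb E_2\cap\mathcal R_Y=\mathcal O^+_Y$. Finally $\|g_n\|_{L^p}=\|f_n\|_{L^p}\to\|Y\|_{L^p}=\|g\|_{L^p}$, so by uniform convexity of $L^p$ the weak convergence improves to $g_n\to g$ strongly in $L^p$.

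With these ingredients the conclusion follows from Theorem \ref{stac}, or directly by contradiction: were \eqref{hrb2} to fail, there would be solutions whose data approach $\mathcal O^+_Y$ but which reach a fixed distance $\ep_0$ from $\mathcal O^+_Y$ at some times $t_n$ (selected using time continuity of $\zeta$ in $L^p$). The conservation laws \eqref{cl1}--\eqref{cl3} then guarantee that $g_n:=\zeta_{t_n}$ satisfies exactly the hypotheses of the compactness step: its distribution function matches that of the data and hence converges to that of $Y$, $E(g_n)=E(\zeta_0)\to E(Y)$, and $P_1 g_n\to0$ because $\|P_1\zeta_{t_n}\|_{L^2}=\|P_1\zeta_0\|_{L^2}\to0$. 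Therefore $g_n\to\mathcal O^+_Y$ in $L^p$, contradicting $\mathrm{dist}_{L^p}(g_n,\mathcal O^+_Y)=\ep_0$. The only genuinely new difficulty beyond the $\alpha\neq0$ case is threading the approximate degree-one constraint through the weak-limit argument, which the conservation law \eqref{cl3} resolves cleanly.
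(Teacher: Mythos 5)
Your proposal is correct in substance but follows a genuinely different route from the paper's. The paper never imposes the constraint $\mathbb P_1 g=0$; instead it folds that constraint into the objective, maximizing the penalized functional $\mathcal E(f)=\frac12\int_{\mathbb S^2} f\mathcal G f\,d\sigma-\frac14\sum_{m}\left|\int_{\mathbb S^2} f\overline{Y_1^m}\,d\sigma\right|^2$ over all of $\mathcal R_Y$; in Fourier terms this equals $\frac12\sum_{j\ge2}\sum_{|m|\le j}|c_j^m|^2/(j(j+1))$, so its \emph{unconstrained} maximizers are exactly $\mathcal R_Y\cap\mathbb E_2$, and it is conserved by \eqref{cl1} and \eqref{cl3}. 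This matters for your first claimed conclusion: your constrained problem over $\mathcal S=\{g\in\mathcal R_Y:\ \mathbb P_1g=0\}$ does \emph{not} literally fit Theorem \ref{stac}, whose hypotheses require $\mathcal M$ to be a level set of a conserved functional on the whole rearrangement class, so ``the conclusion follows from Theorem \ref{stac}'' is inaccurate as stated — it is your direct contradiction argument that actually carries the proof (or else one penalizes as the paper does). That direct argument is sound: the approximate degree-one constraint is threaded through by \eqref{cl3} exactly as you say, and your compactness step — formulated for $g_n\in\mathcal R_{f_n}$ with $f_n\to Y$ rather than for sequences in $\mathcal R_Y$ itself — removes the need for the paper's ``follower'' (flow-map) construction inside Theorem \ref{stac}, at the price of invoking Burton's characterization of the weak closure of a rearrangement class (majorization passes to weak limits; equality of $L^2$ norms then makes the limit an extreme point, hence a genuine rearrangement, which is the content of Lemmas \ref{recvx} and \ref{rl3}). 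The rigidity core is the same as the paper's Proposition \ref{sh1}: traceless quadratic forms whose unordered spectrum is determined by $\int g^2$ and $\int g^3$. Your additional observation that the $\mathbf O(3)$-similarity can be realized in $\mathbf S\mathbf O(3)$ by replacing $Q$ with $-Q$ (equivalently, degree-2 harmonics are even, so $\mathcal O_Y=\mathcal O^+_Y$) is a genuine simplification: the paper instead proves the dichotomy Lemma \ref{tppp} and argues that $\mathcal O^+_Y$ is isolated in $\mathcal M$, a step your identity renders unnecessary.
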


 The conclusion of Theorem \ref{thm2} can also  be equivalently stated as follows: for  any $\varepsilon>0,$ there exists $\delta>0$, such that
 for any smooth solution $\zeta$ of the vorticity equation \eqref{ave},  if
 \begin{equation*}
 \|\zeta_0-Y\|_{L^p(\mathbb S^2)}<\delta,
 \end{equation*}
  then at any time $t>0$, there exists $Y^{t}\in \mathcal O^+_{Y}$ such that
   \begin{equation*}
 \|\zeta_t-Y^{t}\|_{L^p(\mathbb S^2)}<\varepsilon.
 \end{equation*}

It is worth mentioning that the stability conclusions in Theorems \ref{thm1} and \ref{thm2} can also be measured in terms of the $L^p$ norm of the relative vorticity  or the $W^{2,p}$ norm of the stream function. For example, the conclusion in Theorem \ref{thm1} can be equivalently stated as follows:
\begin{itemize}
\item [(i)] (Stability in terms of the relative vorticity) Let $Y\in\mathbb E_{2}$ with $Y\neq 0$, and let $\alpha\in\mathbb R$ with $\alpha\neq -2\omega.$ Then for any $\varepsilon>0,$ there exists some $\delta>0$, such that for any  smooth Euler flow on $\mathbb S^{2} $ with   relative vorticity  $\Omega$, we have that
\begin{equation*}
 \min_{f\in \mathcal H^+_{\alpha\sin\theta+Y} }\|f-\Omega_0\|_{L^p(\mathbb S^2)}<\delta\,\,\Longrightarrow\,\, \min_{f\in \mathcal H^+_{\alpha\sin\theta+Y}}\|f-\Omega_t\|_{L^p(\mathbb S^2)}<\varepsilon\,\,\forall\,t>0.\end{equation*}
This can be easily verified using the fact that $\zeta=\Omega+2\omega\sin\theta.$
\item [(ii)] (Or stability in terms of the stream function) Let $Y\in\mathbb E_{2}$ with $Y\neq 0$, and let $\alpha\in\mathbb R$ with $\alpha\neq  \omega.$ Then for any $\varepsilon>0,$ there exists some $\delta>0$, such that for any  smooth Euler flow on $\mathbb S^{2} $ with   stream function  $\psi$, we have that
 \[\min_{f\in \mathcal H^{+}_{\alpha\sin\theta+Y}}\|f-\psi_0\|_{W^{2,p}(\mathbb S^2)}<\delta\,\,\Longrightarrow\,\, \min_{f\in \mathcal H^{+}_{\alpha\sin\theta+Y}}\|f-\psi_t\|_{W^{2,p}(\mathbb S^2)}<\varepsilon\,\,\forall\,t>0.\]
This follows from   the fact that $\psi=\omega\sin\theta-\mathcal G\zeta$ and  the $L^p$ estimate in Lemma \ref{lpt}.

\end{itemize}

We also mention that the stabilities in Theorems \ref{thm1} and \ref{thm2} may also hold for less regular perturbations. See  Remark \ref{lessregular} in Section \ref{sec4}.

 \section{Preliminaries}\label{sec3}
 \subsection{$L^p$ theory for Poisson equation on a sphere}\label{sec31}

The following result may be known to some extent, but it seems that we can not find it in the literature. Therefore we  present and prove it here for the reader's convenience.
\begin{lemma}\label{lpt}
Let $1<p<+\infty$. Then for any $f\in \mathring L^p(\mathbb S^2)$, there is a unique solution $u\in \mathring W^{2,p}(\mathbb S^2)$ to the following Poisson equation on $\mathbb S^{2}$:
\begin{equation}\label{deuf}
 -\Delta u=f \,\,\mbox{ \rm a.e. on }\,\, \mathbb S^2.
\end{equation}
Moreover, this unique solution $u$ satisfies the following $L^{p}$ estimate:
\begin{equation}\label{estp}
\|u\|_{W^{2,p}(\mathbb S^2)}\leq C\|f\|_{L^p(\mathbb S^2)},
\end{equation}
where $C$ is a positive number depending only on $p$.
\end{lemma}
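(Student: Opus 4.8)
\emph{Strategy.} The plan is to deduce the whole statement from a single \emph{a priori} $L^p$ elliptic estimate and then bootstrap. Concretely, I would first establish the local-to-global estimate
\begin{equation}\label{apriori}
\|u\|_{W^{2,p}(\mathbb S^2)}\le C\left(\|\Delta u\|_{L^p(\mathbb S^2)}+\|u\|_{L^p(\mathbb S^2)}\right),\qquad u\in W^{2,p}(\mathbb S^2),
\end{equation}
then upgrade it on the zero-mean subspace to $\|u\|_{W^{2,p}}\le C\|\Delta u\|_{L^p}$ for $u\in\mathring W^{2,p}(\mathbb S^2)$, and finally combine this with the $L^2$ solvability already encoded in $\mathcal G$ together with a density argument to obtain existence, uniqueness, and the bound \eqref{estp}.

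\emph{The local estimate \eqref{apriori}.} Since $\mathbb S^2$ is compact, I would cover it by finitely many coordinate charts with a subordinate smooth partition of unity $\{\chi_k\}$. In each chart the Laplace--Beltrami operator $\Delta$ is a second-order operator that is uniformly elliptic with smooth coefficients, so the classical interior $L^p$ elliptic (Calder\'on--Zygmund / Agmon--Douglis--Nirenberg) estimates apply to $\chi_k u$, giving $\|\chi_k u\|_{W^{2,p}}\le C(\|\Delta(\chi_k u)\|_{L^p}+\|\chi_k u\|_{L^p})$. Expanding $\Delta(\chi_k u)=\chi_k\Delta u+2\nabla\chi_k\cdot\nabla u+(\Delta\chi_k)u$, summing over $k$, and then using the interpolation (Ehrling) inequality $\|u\|_{W^{1,p}}\le\eta\|u\|_{W^{2,p}}+C_\eta\|u\|_{L^p}$ to absorb the commutator terms, yields \eqref{apriori}. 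The absence of a boundary simplifies matters considerably, since only interior estimates are needed.

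\emph{Upgrading and existence.} To remove the lower-order term on $\mathring W^{2,p}(\mathbb S^2)$ I would argue by contradiction: if no uniform constant works, there is a sequence $u_n\in\mathring W^{2,p}$ with $\|u_n\|_{W^{2,p}}=1$ and $\|\Delta u_n\|_{L^p}\to 0$. The compact embedding $W^{2,p}(\mathbb S^2)\hookrightarrow L^p(\mathbb S^2)$ (Rellich--Kondrachov) extracts an $L^p$-convergent subsequence; feeding this into \eqref{apriori} applied to $u_n-u_m$ shows $(u_n)$ is Cauchy in $W^{2,p}$, with limit $u$ satisfying $\|u\|_{W^{2,p}}=1$ and $\Delta u=0$. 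By elliptic regularity $u$ is smooth, hence constant on the compact manifold $\mathbb S^2$, and the zero-mean constraint forces $u=0$, a contradiction. This gives
\[
\|u\|_{W^{2,p}(\mathbb S^2)}\le C\|\Delta u\|_{L^p(\mathbb S^2)},\qquad u\in\mathring W^{2,p}(\mathbb S^2),
\]
from which uniqueness is immediate. For existence, I would take $f\in\mathring L^p$, approximate it by smooth zero-mean functions $f_n\to f$ in $L^p$, solve $-\Delta u_n=f_n$ via the spherical-harmonic representation of $\mathcal G$ (legitimate since each $f_n\in\mathring L^2$), and apply the estimate to $u_n-u_m$ to conclude $u_n\to u$ in $\mathring W^{2,p}$ with $-\Delta u=f$ and $\|u\|_{W^{2,p}}\le C\|f\|_{L^p}$.

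\emph{Main obstacle.} The genuinely analytic ingredient is the local Calder\'on--Zygmund estimate underlying \eqref{apriori}; once it is available, the compactness argument, the identification of harmonic functions with constants, and the approximation step are all soft. A secondary point deserving care is the range $1<p<2$, where $\mathring L^p\not\subset\mathring L^2$, so $\mathcal G$ cannot be applied to $f$ directly; this is precisely why the approximation by smooth functions (for which the $L^2$ theory does apply) is required, after which the a priori estimate transfers the bound to the limit.
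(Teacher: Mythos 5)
Your proposal is correct, and it reaches the conclusion by a genuinely different route at the one place where real work is needed. Both you and the paper start from the same chart/partition-of-unity estimate $\|u\|_{W^{2,p}(\mathbb S^2)}\le C\left(\|\Delta u\|_{L^p(\mathbb S^2)}+\|u\|_{L^p(\mathbb S^2)}\right)$ (the paper quotes the interior Calder\'on--Zygmund estimates of Gilbarg--Trudinger plus a gluing procedure), and both finish existence identically: approximate $f$ by smooth zero-mean functions, solve by spherical harmonics, and pass to the limit using the a priori bound. The divergence is in how the lower-order term $\|u\|_{L^p}$ is removed. The paper does this by hand in three cases: $p=2$ via the Fourier expansion (with the explicit constant $\|u\|_{L^2}\le\frac12\|f\|_{L^2}$), $p>2$ via H\"older/Sobolev interpolation between $L^2$ and $L^\infty$ with an $\varepsilon$-absorption, and $1<p<2$ by duality against the exponent $p/(p-1)>2$. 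You instead run a single compactness contradiction: Rellich--Kondrachov plus the a priori estimate makes a normalized sequence with $\Delta u_n\to0$ Cauchy in $W^{2,p}$, the limit is harmonic, hence smooth and constant, hence zero by the zero-mean constraint (which survives the limit, since $L^p$ convergence on a finite measure space gives convergence of means), contradicting the normalization; uniqueness is then instant from the improved estimate, rather than via the paper's regularity-then-integration-by-parts step. Your argument is shorter and uniform in $p$; the price is a non-effective constant and reliance on the compact embedding $W^{2,p}(\mathbb S^2)\hookrightarrow L^p(\mathbb S^2)$ and Weyl-type regularity for the harmonic limit, whereas the paper's case analysis is longer and asymmetric in $p$ but elementary and quantitative. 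Both routes are sound; the soft details you leave implicit (density of smooth zero-mean functions in $\mathring L^p(\mathbb S^2)$, smoothness of harmonic $W^{2,p}$ functions) are standard.
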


\begin{proof}
For simplicity, denote by $C$  various positive constants depending only on $p$. For clarity, the proof is divided into four steps.

{\bf Step 1.}
If $f\in\mathring C^\infty(\mathbb S^2),$
then  \eqref{estp} admits a unique solution  $u\in\mathring C^\infty(\mathbb S^2).$
To prove this, we expand $f$ in terms of the basis $\{Y_j^m\}_{j\geq 1,|m|\leq j}$ as follows:
 \begin{equation}\label{expf}
 f=\sum_{j\geq 1}\sum_{|m|\le j}c_j^mY_j^m, \quad c_j^m=\int_{\mathbb S^2}f\overline{ Y_j^m}d\sigma.
 \end{equation}
Then is easy to check that
 \begin{equation}\label{expu}
 u=\sum_{j\geq 1}\sum_{|m|\le j}\frac{c_j^m}{j(j+1)}Y_j^m
 \end{equation}
 belongs to $\mathring C^\infty(\mathbb S^2)$ and satisfies \eqref{estp}. To prove uniqueness, we assume that \eqref{deuf} has two solutions $u_1, u_2\in\mathring C^\infty(\mathbb S^2)$. Then
$ \Delta (u_1-u_2)=0$.
By integration by parts,
\[\int_{\mathbb S^2}|\nabla(u_1-u_2)|^2d\sigma=0,\]
implying that $u_1-u_2$ is constant. Taking into account the fact that both $u_{1}$ and $u_{2}$ are of zero mean,
we obtain $u_1\equiv u_2$.

{\bf Step 2.} If $u\in\mathring C^\infty(\mathbb S^2)$ is a solution to the Poisson equation \eqref{deuf}, then the $L^p$ estimate \eqref{estp} holds.  First by standard interior $L^p$ estimates (cf. \cite{GT}, Theorem 9.11) and a gluing procedure, it is easy to prove  the  estimate:
\begin{equation}\label{lso00}
\|u\|_{W^{2,p}(\mathbb S^2)}\leq C(\|u\|_{L^p(\mathbb S^2)}+\|f\|_{L^p(\mathbb S^2)}).
\end{equation}
Below we show that the term $\|u\|_{L^p(\mathbb S^2)}$ can be removed. We distinguish three cases.
\begin{itemize}
\item[Case 1.]  $p=2$. Expand $f$ as in \eqref{expf}. Then
\begin{equation}\label{lso1}
\|f\|_{L^2(\mathbb S^2)}^2=\sum_{j\ge 1}\sum_{|m|\leq j}|c_j^m|^2.
\end{equation}
On the other hand, by the uniqueness  assertion in Step 1, $u$ must be given by \eqref{expu}. Hence
\begin{equation}\label{lso2}
\|u\|_{L^2(\mathbb S^2)}^2=\sum_{j\ge 1}\sum_{|m|\leq j}\frac{|c_j^m|^2}{j^2(j+1)^2}.
\end{equation}
From \eqref{lso1} and \eqref{lso2}, we obtain
\begin{equation}\label{lso3}
\|u\|_{L^2(\mathbb S^2)}\leq \frac{1}{2}\|f\|_{L^2(\mathbb S^2)}.
\end{equation}
Hence \eqref{estp} holds.
    \item [Case 2.] $p> 2$. In this case, by H\"older's inequality and Sobolev embeddings (cf. \cite{Hebey}, Section 3.3),
    \begin{equation}\label{lso9}
    \begin{split}
        \|u\|_{L^p(\mathbb S^2)}&\leq \|u\|^{2/p}_{L^2(\mathbb S^2)}\|u\|^{1-2/p}_{L^\infty(\mathbb S^2)}\\
      & \leq C\|u\|^{2/p}_{L^2(\mathbb S^2)}\|u\|^{1-2/p}_{W^{2,p}(\mathbb S^2)}\\
      &\leq  C\|f\|^{2/p}_{L^2(\mathbb S^2)}\|u\|^{1-2/p}_{W^{2,p}(\mathbb S^2)}\\
      &\leq \varepsilon \|u\|_{W^{2,p}(\mathbb S^2)}+C_\varepsilon\|f\|_{L^2(\mathbb S^2)}\\
     &\leq \varepsilon \|u\|_{W^{2,p}(\mathbb S^2)}+C_\varepsilon\|f\|_{L^p(\mathbb S^2)},
    \end{split}
    \end{equation}
    where $\varepsilon>0$ is to be determined and $C_\varepsilon$ depends on $\varepsilon$ and $p$. Note that in the third inequality we have used the estimate \eqref{lso3} in Case 1.  The desired estimate \eqref{estp} follows from \eqref{lso00} and \eqref{lso9} by choosing $\varepsilon$ small enough.

     \item [Case 3.] $1<p<2$. We deal with this case  by duality. For  any $g\in \mathring C^\infty(\mathbb S^2)$, by Step 1, there is a unique  $v\in \mathring C^\infty(\mathbb S^2)$  such that
     \[-\Delta v=g.\]
     Since $p/(p-1)> 2$, we deduce from     Case 2  that
     \begin{equation}\label{gpw0}
         \|v\|_{L^{p/(p-1)}(\mathbb S^2)}\leq C\|g\|_{L^{p/(p-1)}(\mathbb S^2)}.
     \end{equation}
     By integration by parts,
      \begin{equation}\label{abgg0}
        \int_{\mathbb S^2}  ugd\sigma = \int_{\mathbb S^2}  vfd\sigma \leq \|f\|_{L^p(\mathbb S^2)}\|v\|_{L^{p/(p-1)}(\mathbb S^2)} \leq  C\|f\|_{L^p(\mathbb S^2)}\|g\|_{L^{p/(p-1)}(\mathbb S^2)}.
    \end{equation}
   Note that \eqref{abgg0} holds for any $g\in \mathring C^\infty(\mathbb S^2)$. Hence \eqref{estp} follows from the following fact:
    \begin{equation}\label{lso12}
    \|u\|_{L^{p}(\mathbb S^2)}=\sup\left\{\int_{\mathbb S^2}  ugd\sigma\mid g\in C^\infty(\mathbb S^2) \right\} =\sup\left\{\int_{\mathbb S^2}  ugd\sigma\mid g\in \mathring C^\infty(\mathbb S^2) \right\}.
    \end{equation}
Note that in the second equality of \eqref{lso12} we used the fact that $u\in\mathring C^\infty(\mathbb S^2).$
\end{itemize}

 {\bf Step 3.} If  $f\in \mathring L^{p}(\mathbb S^2)$, then there is a solution   $u\in \mathring W^{2,p}(\mathbb S^2)$ to the Poisson equation \eqref{deuf}, and such $u$ satisfies the $L^p$ estimate \eqref{estp}. First we choose a sequence of  functions $\{f_n\}\subset \mathring C^\infty(\mathbb S^2)$ such that $f_n\to f$ in $L^p(\mathbb S^2)$ as $n\to+\infty$. For every $n\in\mathbb Z_+$, there is a solution $u_n\in\mathring C^\infty(\mathbb S^2)$ to  \eqref{deuf}   by   Step 1.
 Moreover, the following estimates hold   by   Step 2:
  \begin{equation}\label{bsp1}
      \| u_{n}\|_{W^{2,p}(\mathbb S^2)}\leq C\| f_{n}\|_{L^{p}(\mathbb S^2)},
 \end{equation}
 \begin{equation}\label{bsp2}
      \|u_{m}-u_{n}\|_{W^{2,p}(\mathbb S^2)}\leq C\|f_{m}-f_{n}\|_{L^{p}(\mathbb S^2)}.
 \end{equation}
From \eqref{bsp2}, $\{u_n\}$ is a Cauchy sequence in $\mathring W^{2,p}(\mathbb S^2),$ thus $\{u_n\}$ has  a limit  $u$ in $\mathring W^{2,p}(\mathbb S^2).$
  It is clear that $u$ solves \eqref{deuf}. Moreover, passing to the limit $n\to+\infty$ in \eqref{bsp1} yields the $L^p$ estimate \eqref{estp}.

 {\bf Step 4.} Finally we prove uniqueness.
Suppose \eqref{deuf} has two solutions $u_1, u_2\in\mathring W^{2,p}(\mathbb S^2)$. Then
$ \Delta (u_1-u_2)=0$. By standard elliptic regularity theory (cf.  \cite{GT}, Section 6),  $u_1-u_2$ is smooth. Therefore we can apply the conclusion in Step 1 to obtain
 $u_1\equiv u_2$.
\end{proof}

For $f\in\mathring L^p(\mathbb S^2),$ denote by $\mathcal Gf$ the unique solution to \eqref{deuf} in $\mathring W^{2,p}(\mathbb S^2)$.
The following lemma provides some basic properties of the operator $\mathcal G$.
\begin{lemma}\label{pg0}
Let $1<p<+\infty.$ Then
\begin{itemize}
    \item [(i)] $\mathcal G$ is a bounded linear operator from $\mathring L^{p}(\mathbb S^{2})$ onto $\mathring W^{2,p}(\mathbb S^{2})$;
    \item[(ii)] $\mathcal G$ is a compact linear operator from $\mathring L^{p}(\mathbb S^{2})$ to $\mathring L^{r}(\mathbb S^2)$ for any $1\leq r\leq +\infty$;
    \item[(iii)]$\mathcal G$ is symmetric, i.e.,
    \begin{equation}\label{symc}
\int_{\mathbb S^{2}}f\mathcal Gg d\sigma=\int_{\mathbb S^{2}}g\mathcal Gf d\sigma,\quad\forall\, f,g\in\mathring{L}^{p}(\mathbb S^{2});
\end{equation}
\item[(iv)]$\mathcal G$ is positive-definite, i.e.,
\begin{equation}\label{podef}
\int_{\mathbb S^{2}}f\mathcal Gf d\sigma \geq 0, \quad\forall\,f\in\mathring{L}^{p}(\mathbb S^{2}),
\end{equation}
and the  equality holds  if and only if $f\equiv 0.$
\end{itemize}
\end{lemma}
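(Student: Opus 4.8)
The plan is to deduce all four properties from the existence, uniqueness and $L^p$ estimate already established in Lemma \ref{lpt}, combined with standard Sobolev theory on the compact manifold $\mathbb{S}^2$ and integration by parts. For part (i), linearity of $\mathcal G$ follows from the linearity of $\Delta$ together with the uniqueness assertion of Lemma \ref{lpt}: if $-\Delta u_i=f_i$ with $u_i\in\mathring W^{2,p}(\mathbb S^2)$, then $a_1u_1+a_2u_2\in\mathring W^{2,p}(\mathbb S^2)$ solves the Poisson equation with datum $a_1f_1+a_2f_2$, so $\mathcal G(a_1f_1+a_2f_2)=a_1\mathcal Gf_1+a_2\mathcal Gf_2$. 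Boundedness is exactly the estimate \eqref{estp}. For surjectivity onto $\mathring W^{2,p}(\mathbb S^2)$, given $u\in\mathring W^{2,p}(\mathbb S^2)$ I set $f:=-\Delta u\in L^p(\mathbb S^2)$; since $\Delta u={\rm div}(\nabla u)$ and the integral of a divergence over the closed manifold $\mathbb S^2$ vanishes, $f$ has zero mean, i.e. $f\in\mathring L^p(\mathbb S^2)$, and the uniqueness in Lemma \ref{lpt} forces $\mathcal Gf=u$.

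For part (ii), I factor $\mathcal G$ as the bounded operator $\mathcal G\colon\mathring L^p(\mathbb S^2)\to\mathring W^{2,p}(\mathbb S^2)$ from (i) followed by the Sobolev embedding $\mathring W^{2,p}(\mathbb S^2)\hookrightarrow\mathring L^\infty(\mathbb S^2)$. Because $\dim\mathbb S^2=2$ and $2p>2$ for every $p>1$, this embedding is not only continuous but compact (it factors through $C^0$ via Rellich-Kondrachov and Arzel\`a-Ascoli). Composing with the continuous embedding $\mathring L^\infty(\mathbb S^2)\hookrightarrow\mathring L^r(\mathbb S^2)$, which holds for all $1\le r\le+\infty$ since $\mathbb S^2$ has finite measure, shows that $\mathcal G\colon\mathring L^p(\mathbb S^2)\to\mathring L^r(\mathbb S^2)$ is compact for every such $r$.

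For parts (iii) and (iv), the essential tool is Green's identity on the closed manifold. Writing $u=\mathcal Gf$ and $v=\mathcal Gg$, for smooth data $f,g\in\mathring C^\infty(\mathbb S^2)$ one has classically $\int_{\mathbb S^2}fv\,d\sigma=\int_{\mathbb S^2}(-\Delta u)v\,d\sigma=\int_{\mathbb S^2}\nabla u\cdot\nabla v\,d\sigma=\int_{\mathbb S^2}u(-\Delta v)\,d\sigma=\int_{\mathbb S^2}ug\,d\sigma$, which is precisely \eqref{symc} for smooth data, and taking $g=f$ yields $\int_{\mathbb S^2}f\mathcal Gf\,d\sigma=\int_{\mathbb S^2}|\nabla u|^2\,d\sigma\ge0$. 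To pass to general $f,g\in\mathring L^p(\mathbb S^2)$ I approximate by $f_n,g_n\in\mathring C^\infty(\mathbb S^2)$ in $L^p(\mathbb S^2)$ and invoke the continuity of $\mathcal G\colon\mathring L^p(\mathbb S^2)\to\mathring W^{2,p}(\mathbb S^2)\hookrightarrow L^\infty(\mathbb S^2)$ from (i): since $\mathcal Gg_n\to\mathcal Gg$ in $L^\infty(\mathbb S^2)$ while $f_n\to f$ in $L^p(\mathbb S^2)$, H\"older's inequality gives $\int_{\mathbb S^2}f_n\mathcal Gg_n\,d\sigma\to\int_{\mathbb S^2}f\mathcal Gg\,d\sigma$, and symmetrically for the reversed expression, so \eqref{symc} and the nonnegativity \eqref{podef} extend to all of $\mathring L^p(\mathbb S^2)$; for the equality case, the identity $\int_{\mathbb S^2}f\mathcal Gf\,d\sigma=\int_{\mathbb S^2}|\nabla\mathcal Gf|^2\,d\sigma$ persists in the limit (using $\nabla\mathcal Gf_n\to\nabla\mathcal Gf$ in $L^2(\mathbb S^2)$, which holds since $W^{1,p}(\mathbb S^2)\hookrightarrow L^2(\mathbb S^2)$ for all $p>1$), so vanishing of the left side forces $\nabla\mathcal Gf\equiv0$, whence $\mathcal Gf$ is constant and, by its zero-mean normalization, $\mathcal Gf\equiv0$ and $f=-\Delta\mathcal Gf\equiv0$.

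The only points demanding genuine care are the justification of the integration by parts at merely $W^{2,p}$ regularity, which I handle through the density argument above rather than by tracking Sobolev exponents by hand, and the verification in (ii) that the dimension count $2p>2$ indeed produces a \emph{compact} embedding into $L^\infty(\mathbb S^2)$ for every $p>1$; both are standard once set up, so I expect no serious obstacle, with the density passage to the limit being the most delicate routine step.
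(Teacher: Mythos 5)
Your proposal is correct and follows essentially the same route as the paper: item (i) from the $L^p$ estimate of Lemma \ref{lpt}, item (ii) by composing with the (compact) Sobolev embedding $\mathring W^{2,p}(\mathbb S^2)\hookrightarrow \mathring L^r(\mathbb S^2)$, and items (iii)--(iv) by integration by parts. The paper states these three ingredients in a single line, while you supply the details it omits (surjectivity via $f:=-\Delta u$, and the smooth-approximation argument justifying Green's identity and the equality case at $W^{2,p}$ regularity), all of which check out.
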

 \begin{proof}
 Item (i) is a direct consequence of the $L^p$ estimate in Lemma \ref{lpt}. Item (ii) follows from item (i) and standard Sobolev embeddings (cf. \cite{Hebey}, Section 3.3). Items (iii) and (iv) follow  from integration by parts.
 \end{proof}

\subsection{A Poincar\'e-type inequality}\label{sec32}
Suppose $f\in\mathring L^2(\mathbb S^2)$ has the Fourier expansion
 \begin{equation}\label{ugi}
 f=\sum_{j\geq 1}\sum_{|m|\le j}c_j^mY_j^m, \quad c_j^m:=\int_{\mathbb S^2}f\overline{ Y_j^m}d\sigma.
 \end{equation}
Then the orthogonal projection of $f$ onto $\mathbb E_j$ with respect to the $L^2$ inner product  is given by
\begin{equation}\label{potrj}
\mathbb P_jf=\sum_{i\leq j}\sum_{|m|\le i}c_i^mY_i^m,
\end{equation}
and   the orthogonal projection of $f$ onto $\mathbb E^\perp_j$, the orthogonal complement of $\mathbb E_j^\perp$ in $\mathring L^2(\mathbb S^2)$,  is given by
\begin{equation}\label{potrj6}
\mathbb P^\perp_jf=f-\mathbb P_jf.
\end{equation}

\begin{lemma}\label{poin}
The following  Poincar\'e-type inequality holds:
\begin{equation}\label{eei}
\int_{\mathbb S^2} \mathbb P^\perp_jf {\mathcal G \mathbb P^\perp_j f}d\sigma\leq \frac{1}{(j+1)(j+2)}\int_{\mathbb S^2}|\mathbb P^\perp_j f|^2d\sigma,\quad \forall\, f\in\mathring L^2(\mathbb S^2).
\end{equation}
In particular,
\begin{equation}\label{p1}
\int_{\mathbb S^2} \mathbb P^\perp_1f {\mathcal G \mathbb P^\perp_1 f}d\sigma\leq
\frac{1}{6}\int_{\mathbb S^2}|\mathbb P^\perp_1 f|^2d\sigma,\quad \forall\, f\in\mathring L^2(\mathbb S^2).
\end{equation}
\end{lemma}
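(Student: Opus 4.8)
The plan is to diagonalize both sides of \eqref{eei} in the spherical-harmonic basis, which turns the inequality into an elementary comparison of eigenvalues. First I would expand $f$ as in \eqref{ugi} and read off from \eqref{potrj}--\eqref{potrj6} that $\mathbb P^\perp_j f$ retains precisely the modes of degree $i\geq j+1$, namely
\[
\mathbb P^\perp_j f=\sum_{i\geq j+1}\sum_{|m|\le i}c_i^m Y_i^m .
\]
Since $\mathcal G$ acts diagonally on this basis, multiplying each $Y_i^m$ by $1/(i(i+1))$ (as recorded just after \eqref{grop}), it follows that
\[
\mathcal G\mathbb P^\perp_j f=\sum_{i\geq j+1}\sum_{|m|\le i}\frac{c_i^m}{i(i+1)}Y_i^m .
\]

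Next I would compute both integrals in \eqref{eei} by Parseval's identity, using that $f$ is real-valued together with the orthonormality relation $\int_{\mathbb S^2}Y_i^m\overline{Y_{i'}^{m'}}\,d\sigma=\delta_{ii'}\delta_{mm'}$. To handle the left-hand side one writes $\mathcal G\mathbb P^\perp_j f=\overline{\mathcal G\mathbb P^\perp_j f}$ (it is a real function) and pairs it with $\mathbb P^\perp_j f$; orthonormality then collapses the double sum to its diagonal, giving
\[
\int_{\mathbb S^2}\mathbb P^\perp_j f\,\mathcal G\mathbb P^\perp_j f\,d\sigma=\sum_{i\geq j+1}\sum_{|m|\le i}\frac{|c_i^m|^2}{i(i+1)},\qquad
\int_{\mathbb S^2}|\mathbb P^\perp_j f|^2\,d\sigma=\sum_{i\geq j+1}\sum_{|m|\le i}|c_i^m|^2 .
\]

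The inequality then follows from the monotonicity of the eigenvalues: for every index $i\geq j+1$ one has $i(i+1)\geq (j+1)(j+2)$, hence $1/(i(i+1))\leq 1/((j+1)(j+2))$. Inserting this bound term by term into the first sum and comparing with the second yields \eqref{eei}, and specializing to $j=1$ produces the factor $1/((1+1)(1+2))=1/6$ in \eqref{p1}. The only point that requires a little care is the rigorous justification of the term-by-term integration, i.e.\ interchanging the infinite sums with the integral; this is legitimate because the partial sums converge in $\mathring L^2(\mathbb S^2)$, the operator $\mathcal G$ is bounded on $\mathring L^2(\mathbb S^2)$ by Lemma \ref{pg0}, and the $L^2$ inner product is continuous, so Parseval applies to the limits. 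No genuine obstacle arises here; the estimate is in fact sharp, equality holding exactly when $f$ is supported on the single degree $i=j+1$.
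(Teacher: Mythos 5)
Your proposal is correct and follows essentially the same route as the paper: expand in the spherical-harmonic basis, use that $\mathcal G$ acts diagonally with eigenvalues $1/(i(i+1))$, and compare these eigenvalues termwise with $1/((j+1)(j+2))$ for $i\geq j+1$. The only (inessential) difference is that you spell out the Parseval/interchange-of-limits justification, which the paper leaves implicit; note also that your closing equality characterization should be phrased for $\mathbb P^\perp_j f$ (equivalently $\mathbb P^\perp_{j+1}f=0$), as in the paper's remark.
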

\begin{proof}
Suppose $f$ has the Fourier expansion \eqref{ugi}. Then
\[\mathbb P^\perp_j f=\sum_{i\geq j+1}\sum_{|m|\le i}c_i^mY_i^m,\quad \mathcal G\mathbb P^\perp_j f=\sum_{i\geq  j+1}\sum_{|m|\le i}\frac{c_i^m}{i(i+1)}Y_i^m.\]
Therefore
\begin{align*}
\int_{\mathbb S^2} \mathbb P^\perp_jf {\mathcal G \mathbb P^\perp_j f}d\sigma&=\sum_{i\geq j+1}\sum_{|m|\le i}\frac{|c_i^m|^2}{i(i+1)}\\
&\leq \frac{1}{(j+1)(j+2)}\sum_{i\geq j+1}\sum_{|m|\le i} |c_i^m|^2\\
&=\frac{1}{(j+1)(j+2)}\int_{\mathbb S^2}|\mathbb P^\perp_j f|^2d\sigma.
\end{align*}
\end{proof}

\begin{remark}
Checking the proof of Lemma \ref{poin}, it is easy to see that the inequality   \eqref{eei} is an equality if and only if $\mathbb P^{\perp}_{j+1}f=0,$ or equivalently,
$f\in\bigoplus_{i=1}^{j+1}\mathbb E_{i}.$

\end{remark}

\subsection{Properties of rearrangements}\label{sec33}
 In this subsection, we provide some  properties of the set of rearrangements of a given function. The  results   are just stated  for the convenience of later  applications, not in their greatest generality.

\begin{lemma}\label{lm33}
Let $1<p<+\infty.$ Let $\mathcal R$ be the set of rearrangements of a  function in  $L^p(\mathbb S^2)$. Let $g\in L^{p/(p-1)}(\mathbb S^2).$   Suppose $\tilde f\in\mathcal R$ such that
\[\tilde f(\mathbf x)=  \mathfrak g(g(\mathbf x)) \quad\mbox{\rm  a.e.  $\mathbf x\in \mathbb S^2$}\]
for some increasing function $\mathfrak g:\mathbb R\mapsto\mathbb R\cup\{\pm\infty\}.$ Then
\[\int_{\mathbb S^2}\tilde fgd\sigma=\sup_{f\in\mathcal R}\int_{\mathbb S^2}fgd\sigma.\]
\end{lemma}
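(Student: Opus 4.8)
The plan is to prove the two inequalities separately. Since $\tilde f\in\mathcal R$, the bound $\int_{\mathbb S^2}\tilde f g\,d\sigma\leq\sup_{f\in\mathcal R}\int_{\mathbb S^2}f g\,d\sigma$ is immediate, so the entire content is to show that $\tilde f$ is a maximizer, i.e. $\int_{\mathbb S^2}f g\,d\sigma\leq\int_{\mathbb S^2}\tilde f g\,d\sigma$ for every $f\in\mathcal R$. The key device is the signed layer-cake representation: for any $h\in L^p(\mathbb S^2)$ one writes $h=\int_0^{\infty}\mathbf 1_{\{h>\tau\}}\,d\tau-\int_{-\infty}^0\mathbf 1_{\{h<\tau\}}\,d\tau$, whence, by Fubini (justified since $g\in L^{p/(p-1)}$ and $h\in L^p$),
\[
\int_{\mathbb S^2}h g\,d\sigma=\int_0^{\infty}\Big(\int_{\{h>\tau\}}g\,d\sigma\Big)d\tau-\int_{-\infty}^0\Big(\int_{\{h<\tau\}}g\,d\sigma\Big)d\tau.
\]
Applying this to both $h=\tilde f$ and $h=f$, and recalling that any two elements of $\mathcal R$ share the same distribution function, so that $\mathsf m(\{\tilde f>\tau\})=\mathsf m(\{f>\tau\})$ and $\mathsf m(\{\tilde f<\tau\})=\mathsf m(\{f<\tau\})$ for every $\tau$, reduces the claim to a level-by-level comparison.

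The heart of the argument is the identification of the level sets of $\tilde f$. Because $\tilde f=\mathfrak g(g)$ with $\mathfrak g$ increasing, for each $\tau$ the super-level set $\{\tilde f>\tau\}=\{\mathfrak g(g)>\tau\}$ is, up to a null set, of the form $\{g>c_\tau\}\subseteq\{\tilde f>\tau\}\subseteq\{g\geq c_\tau\}$ for a suitable threshold $c_\tau\in[-\infty,+\infty]$; symmetrically, $\{\tilde f<\tau\}$ lies between $\{g<c_\tau'\}$ and $\{g\leq c_\tau'\}$. I would then invoke the bathtub principle: among all measurable $A\subseteq\mathbb S^2$ of a prescribed measure, $\int_A g\,d\sigma$ is maximized by the super-level sets of $g$ and minimized by the sub-level sets. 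Concretely, if $\{g>c\}\subseteq A\subseteq\{g\geq c\}$ and $\mathsf m(A')=\mathsf m(A)$, then $\int_{A'}g-\int_A g=\int_{A'\setminus A}g-\int_{A\setminus A'}g\leq c\,\mathsf m(A'\setminus A)-c\,\mathsf m(A\setminus A')=0$, using $g\leq c$ on $A'\setminus A$ and $g\geq c$ on $A\setminus A'$. Since $\mathsf m(\{f>\tau\})=\mathsf m(\{\tilde f>\tau\})$, this yields $\int_{\{f>\tau\}}g\,d\sigma\leq\int_{\{\tilde f>\tau\}}g\,d\sigma$; the dual statement for sub-level sets gives $\int_{\{\tilde f<\tau\}}g\,d\sigma\leq\int_{\{f<\tau\}}g\,d\sigma$. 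Substituting these two pointwise-in-$\tau$ inequalities into the layer-cake formula (the second enters with a minus sign, so both push the same way) and integrating in $\tau$ gives $\int_{\mathbb S^2}f g\,d\sigma\leq\int_{\mathbb S^2}\tilde f g\,d\sigma$, completing the proof.

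I expect the main obstacle to be the bookkeeping around level sets $\{g=c\}$ of positive measure, where the increasing function $\mathfrak g$ need not separate values cleanly and the threshold $c_\tau$ must be chosen so that the measures of $\{\tilde f>\tau\}$ and the corresponding super-level set of $g$ are genuinely compatible; this is exactly the flexibility that the two-sided inclusion $\{g>c\}\subseteq A\subseteq\{g\geq c\}$ in the bathtub principle is designed to absorb. An alternative, equivalent route is to bound $\int_{\mathbb S^2}f g\,d\sigma$ above by $\int_0^{4\pi}f^{\Delta}(s)g^{\Delta}(s)\,ds$ via the Hardy--Littlewood inequality, where $f^{\Delta},g^{\Delta}$ denote decreasing rearrangements and the right-hand side depends only on the common distribution of the elements of $\mathcal R$, and then to verify that the hypothesis $\tilde f=\mathfrak g(g)$ forces equality for $\tilde f$; this is the formulation closest to Burton's framework in \cite{BMA,BHP}, which could be cited directly in place of the self-contained argument above.
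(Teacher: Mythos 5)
Your proof is correct, but it takes a genuinely different route from the paper: the paper's entire proof of this lemma is a one-line citation, ``It is a consequence of Theorem 1 and Lemma 3 in \cite{BMA}'', i.e.\ it outsources the statement to Burton's rearrangement theory --- exactly the alternative you mention in your closing sentence. What you have written is in effect a self-contained reproof of Burton's result: the signed layer-cake formula reduces the linear functional $\int_{\mathbb S^2}fg\,d\sigma$ to the level-set quantities $\int_{\{f>\tau\}}g\,d\sigma$ and $\int_{\{f<\tau\}}g\,d\sigma$; the hypothesis $\tilde f=\mathfrak g(g)$ with $\mathfrak g$ increasing sandwiches each super-level (resp.\ sub-level) set of $\tilde f$ between a strict and a non-strict level set of $g$, since $\{s\in\mathbb R\mid\mathfrak g(s)>\tau\}$ is an up-set and hence an interval $(c_\tau,+\infty)$ or $[c_\tau,+\infty)$; and your two-sided bathtub computation then gives the pointwise-in-$\tau$ comparison, with both the $+$ and $-$ contributions pushing the same way. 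The bookkeeping you flag is indeed routine: the inclusion $\{g>c_\tau\}\subseteq\{\tilde f>\tau\}\subseteq\{g\ge c_\tau\}$ absorbs any atom $\{g=c_\tau\}$ of positive measure, the degenerate thresholds $c_\tau=\pm\infty$ occur only when the relevant sets are null or co-null (so the comparison is trivial there), equimeasurability of $f$ and $\tilde f$ does determine $\mathsf m(\{f<\tau\})$ as well as $\mathsf m(\{f>\tau\})$, and Fubini is justified since $g\in L^{p/(p-1)}$ and $f\in L^p$. As for what each approach buys: the paper's citation is short and situates the lemma inside the variational framework of \cite{BMA,BHP} on which the rest of the paper leans repeatedly, while your argument costs a page but makes the lemma independent of the reference and makes transparent exactly where the monotonicity of $\mathfrak g$ and the equimeasurability defining $\mathcal R$ enter.
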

\begin{proof}
It is a consequence of Theorem 1 and Lemma 3 in \cite{BMA}.
\end{proof}

\begin{lemma}\label{lemlg}
Let $1<p<+\infty.$ Let $\mathcal R$ be the set of rearrangements of a  function in  $L^p(\mathbb S^2)$. Denote by  $\overline{\mathcal R^{w}}$ the weak closure of $\mathcal R$ in $L^p(\mathbb S^2).$    Let $g\in L^{p/(p-1)}(\mathbb S^2).$  Define  $L_{g}: L^{p}(\mathbb S^{2})\to\mathbb R$ as follows:
\[L_{g}(f)=\int_{\mathbb S^{2}}fgd\sigma,\quad f\in  L^{p}(\mathbb S^{2}).\]
Suppose $\tilde f\in\mathcal R$ satisfies
\[\tilde f(\mathbf x)=  \mathfrak g(g(\mathbf x)) \quad\mbox{\rm  a.e.  $\mathbf x\in \mathbb S^2$}\]
for some increasing function $\mathfrak g:\mathbb R\mapsto\mathbb R\cup\{\pm\infty\}.$ Then $\tilde f$ is the unique maximizer of  $L_{g}$ relative to $\overline{\mathcal R^{w}}$.

\end{lemma}

\begin{proof}
See Theorem 3 in \cite{BMA}.

\end{proof}

\begin{lemma}\label{recvx}
Let $1< p<+\infty$. Let $\mathcal R$ be the set of rearrangements of a function in  $L^p(\mathbb S^2)$. Denote by  $\overline{\mathcal R^{w}}$ the weak closure of $\mathcal R$ in $L^p(\mathbb S^2).$ Then
\begin{itemize}
    \item [(i)] $\overline{\mathcal R^{w}}$ is convex, thus is the closed convex hull of $\mathcal R$;
    \item[(ii)] $\mathcal R$ is exactly the set of extreme points of  $\overline{\mathcal R^{w}}$.
    \end{itemize}
\end{lemma}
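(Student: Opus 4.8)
The plan is to prove both parts through the Hardy--Littlewood rearrangement inequality combined with a majorization characterization of the weak closure, in the spirit of Burton \cite{BMA,BHP}. Throughout, let $f_0\in L^p(\mathbb S^2)$ be the base function with $\mathcal R=\mathcal R_{f_0}$, write $p'=p/(p-1)$, and for $h\in L^p(\mathbb S^2)$ let $h^\Delta:[0,4\pi]\to\mathbb R$ denote the decreasing rearrangement of $h$ onto the interval $[0,|\mathbb S^2|]=[0,4\pi]$. Since every element of $\mathcal R$ is equimeasurable with $f_0$, they all share the norm $\|f_0\|_{L^p(\mathbb S^2)}$; hence $\mathcal R$ is bounded, and as $1<p<+\infty$ makes $L^p(\mathbb S^2)$ reflexive, $\mathcal R$ is relatively weakly compact and $\overline{\mathcal R^{w}}$ is weakly compact.

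For (i) the key is to identify $\overline{\mathcal R^{w}}$ with a manifestly convex set. By the Hardy--Littlewood inequality, $\int_{\mathbb S^2}fg\,d\sigma\le\int_0^{4\pi}f_0^\Delta g^\Delta\,ds$ for every $f\in\mathcal R$ and every $g\in L^{p'}(\mathbb S^2)$, while Lemma \ref{lm33} shows the supremum over $f\in\mathcal R$ equals $\int_0^{4\pi}f_0^\Delta g^\Delta\,ds$, attained by a monotone rearrangement; taking $g\equiv\pm1$ also forces $\int_{\mathbb S^2}f\,d\sigma=\int_{\mathbb S^2}f_0\,d\sigma$. I would then introduce
\[
\mathcal M:=\left\{h\in L^p(\mathbb S^2)\ \Big|\ \int_{\mathbb S^2}hg\,d\sigma\le\int_0^{4\pi}f_0^\Delta g^\Delta\,ds\ \ \forall\,g\in L^{p'}(\mathbb S^2)\right\}
\]
and claim $\overline{\mathcal R^{w}}=\mathcal M$. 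The inclusion $\overline{\mathcal R^{w}}\subseteq\mathcal M$ is immediate, since each defining inequality is weakly continuous and linear in $h$, hence preserved under weak limits. The reverse inclusion $\mathcal M\subseteq\overline{\mathcal R^{w}}$ is the substantial point: it asserts that every function majorized by $f_0$ is a weak limit of rearrangements, which I expect to be the main obstacle, requiring an explicit approximation (slicing the range of $h$ and reshuffling mass on level sets to produce rearrangements converging weakly to $h$). Granting the identification, convexity is automatic: for each fixed $g$ the set $\{h:\int_{\mathbb S^2}hg\,d\sigma\le\int_0^{4\pi}f_0^\Delta g^\Delta\,ds\}$ is a closed half-space, so $\mathcal M$ is an intersection of closed half-spaces, hence convex and weakly closed, and therefore equal to the closed convex hull of $\mathcal R$.

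For (ii), I would first show each $f\in\mathcal R$ is an extreme point of $\overline{\mathcal R^{w}}$, using only the easy inclusion $\overline{\mathcal R^{w}}\subseteq\mathcal M$. Suppose $f=\tfrac12(h_1+h_2)$ with $h_1,h_2\in\overline{\mathcal R^{w}}$. The majorization constraints applied with $\Phi(t)=|t|^p$ give $\int_{\mathbb S^2}|h_i|^p\,d\sigma\le\|f_0\|_{L^p(\mathbb S^2)}^p=\int_{\mathbb S^2}|f|^p\,d\sigma$, whence
\[
\int_{\mathbb S^2}|f|^p\,d\sigma=\int_{\mathbb S^2}\Big|\tfrac{h_1+h_2}{2}\Big|^p\,d\sigma\le\tfrac12\int_{\mathbb S^2}|h_1|^p\,d\sigma+\tfrac12\int_{\mathbb S^2}|h_2|^p\,d\sigma\le\int_{\mathbb S^2}|f|^p\,d\sigma,
\]
so equality holds throughout; strict convexity of $t\mapsto|t|^p$ for $1<p<+\infty$ then forces $h_1=h_2=f$ a.e. Conversely, I must show every extreme point lies in $\mathcal R$, and here I would argue by contraposition within $\mathcal M=\overline{\mathcal R^{w}}$: if $h\in\mathcal M$ is not equimeasurable with $f_0$, its distribution strictly differs, and I would construct a nonzero $w\in L^\infty(\mathbb S^2)$ supported on a union of two equal-measure pieces of level sets of $h$, with $\int_{\mathbb S^2}w\,d\sigma=0$ and sufficiently small amplitude, so that both $h\pm w$ still satisfy all the majorization inequalities defining $\mathcal M$; then $h=\tfrac12\bigl((h+w)+(h-w)\bigr)$ exhibits $h$ as a nontrivial convex combination, so $h$ is not extreme. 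Engineering this admissible perturbation while respecting every majorization constraint is the delicate step, and together with the weak-density claim in (i) it is where the real work lies; both ultimately reduce to the finite nonatomic Hardy--Littlewood--P\'olya majorization theory underlying Burton's results \cite{BMA,BHP}.
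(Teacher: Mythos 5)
Your executed pieces are correct, and your architecture mirrors the actual literature proof: weak compactness of $\overline{\mathcal R^{w}}$, the inclusion $\overline{\mathcal R^{w}}\subseteq\mathcal M$, the mean constraint, and the strict-convexity argument showing that each $f\in\mathcal R$ is an extreme point (this last piece is complete and is essentially how that half of (ii) is proved in the literature). However, the two steps you explicitly defer are not loose ends; together they \emph{are} the lemma, and neither is carried out. For (i), note that by the Hahn--Banach separation theorem your set $\mathcal M$ is by its very definition the closed convex hull of $\mathcal R$: it is the intersection of all closed half-spaces $\{h:\int_{\mathbb S^2}hg\,d\sigma\le\sup_{f\in\mathcal R}\int_{\mathbb S^2}fg\,d\sigma\}$, once Lemma \ref{lm33} and the Hardy--Littlewood inequality identify the support function $\sup_{f\in\mathcal R}\int_{\mathbb S^2}fg\,d\sigma$ with $\int_0^{4\pi}f_0^\Delta g^\Delta\,ds$. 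Consequently the ``substantial point'' $\mathcal M\subseteq\overline{\mathcal R^{w}}$ is not a reduction of (i) to something more tractable; it is literally a restatement of (i), namely that the weak closure contains (hence equals) the closed convex hull. Your remark that it requires ``an explicit approximation (slicing the range of $h$ and reshuffling mass on level sets)'' names the theorem to be proved --- Ryff's weak-density theorem for orbits of rearrangements, transplanted to the nonatomic measure space $\mathbb S^2$ --- without proving any of it.

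For (ii), the converse direction has the same status. Given $h\in\overline{\mathcal R^{w}}\setminus\mathcal R$, you must produce $w\neq0$ such that $h\pm w$ satisfy every one of the uncountably many inequalities defining $\mathcal M$. The only available room is the strict slack $\int_0^s h^\Delta(t)\,dt<\int_0^s f_0^\Delta(t)\,dt$ on some interval of values of $s$ (which holds because equality for all $s$ would force $h^\Delta=f_0^\Delta$, i.e.\ $h\in\mathcal R$), and the perturbation must be localized on level sets of $h$ whose values correspond to that interval so that no constraint is destroyed; making this rigorous is exactly the content of Ryff's extreme-point theorem, i.e.\ of Lemma 2.3 in \cite{BHP}, and your sketch stops at announcing it. Since the paper's own proof is nothing but the citation to Lemmas 2.2 and 2.3 of \cite{BHP}, a blind proof is expected to supply precisely these two arguments; as it stands, your proposal proves one half of (ii) and reduces the remainder to the cited theorems, which is a genuine gap.
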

\begin{proof}
See  Lemmas 2.2 and 2.3 in \cite{BHP}.
\end{proof}

The following  lemma is a corollary of  Lemma \ref{recvx}.

\begin{lemma}\label{rl3}
Let $1<p<+\infty$, and $\mathcal R$ be the set of rearrangements of a  function  in $L^p(\mathbb S^2)$. Then $\mathcal R$ is strongly closed in $L^p(\mathbb S^2).$

\end{lemma}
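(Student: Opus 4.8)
The plan is to deduce strong closedness directly from the extreme-point characterization in Lemma \ref{recvx}, with the strict convexity of $L^p(\mathbb S^2)$ for $1<p<+\infty$ as the decisive analytic input. Let $\{f_n\}\subset\mathcal R$ converge strongly in $L^p(\mathbb S^2)$ to some $f$; the goal is to show $f\in\mathcal R$.

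First I would record that every element of $\mathcal R$ has the same $L^p$ norm. Indeed, if $g\in\mathcal R$, then $g$ and the fixed function share the same distribution function, so their pushforward laws on $\mathbb R$ coincide and $\int_{\mathbb S^2}|g|^p\,d\sigma$ equals a common value $\rho^p$ for all $g\in\mathcal R$. Hence $\|f_n\|_{L^p(\mathbb S^2)}=\rho$ for every $n$, and strong convergence gives $\|f\|_{L^p(\mathbb S^2)}=\rho$. In particular $\mathcal R$ is contained in the closed ball $\overline{B}_\rho:=\{g\in L^p(\mathbb S^2)\mid \|g\|_{L^p(\mathbb S^2)}\le\rho\}$. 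Since strong convergence implies weak convergence, $f$ lies in the weak closure $\overline{\mathcal R^{w}}$, which by Lemma \ref{recvx}(i) is the closed convex hull of $\mathcal R$. Because $\overline{B}_\rho$ is convex and strongly (hence weakly) closed and contains $\mathcal R$, it also contains $\overline{\mathcal R^{w}}$; thus $\rho$ is the largest norm attained on $\overline{\mathcal R^{w}}$, and $f$ attains it.

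The key step is to show that a norm-maximal point of $\overline{\mathcal R^{w}}$ must be an extreme point. Suppose $f=\tfrac12(g+h)$ with $g,h\in\overline{\mathcal R^{w}}$ and $g\neq h$. Then $\|g\|_{L^p(\mathbb S^2)}\le\rho$ and $\|h\|_{L^p(\mathbb S^2)}\le\rho$, and the strict convexity of the $L^p$ norm for $1<p<+\infty$ (via Clarkson's inequalities) yields
\[
\rho=\|f\|_{L^p(\mathbb S^2)}=\Big\|\tfrac12(g+h)\Big\|_{L^p(\mathbb S^2)}<\tfrac12\big(\|g\|_{L^p(\mathbb S^2)}+\|h\|_{L^p(\mathbb S^2)}\big)\le\rho,
\]
a contradiction. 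Hence $f$ is an extreme point of $\overline{\mathcal R^{w}}$, and by Lemma \ref{recvx}(ii) the extreme points of $\overline{\mathcal R^{w}}$ are precisely the elements of $\mathcal R$. Therefore $f\in\mathcal R$, which proves that $\mathcal R$ is strongly closed in $L^p(\mathbb S^2)$.

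The only ingredient beyond Lemma \ref{recvx} that carries real content is the strict convexity of $L^p$, and this is exactly where the hypothesis $1<p<+\infty$ is used; the remaining steps are bookkeeping about distribution functions and the elementary observation that the closed convex hull of a set lying on a sphere is trapped inside the corresponding ball. I do not anticipate any genuine obstacle here — the lemma is essentially a packaging of Lemma \ref{recvx} together with a standard geometric property of uniformly convex spaces.
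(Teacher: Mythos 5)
Your proof is correct and takes essentially the same route as the paper: both arguments show that the strong limit has the common norm $\rho$, lies in $\overline{\mathcal R^{w}}\subset\overline{B}_\rho$, is forced by the strict convexity of $L^p(\mathbb S^2)$ ($1<p<+\infty$) to be an extreme point of $\overline{\mathcal R^{w}}$, and then invoke Lemma \ref{recvx}(ii) to conclude it belongs to $\mathcal R$. The only blemish is the intermediate inequality $\bigl\|\tfrac12(g+h)\bigr\|_{L^p(\mathbb S^2)}<\tfrac12\bigl(\|g\|_{L^p(\mathbb S^2)}+\|h\|_{L^p(\mathbb S^2)}\bigr)$, which can fail for distinct \emph{positively proportional} $g,h$; the statement you actually need, and which strict convexity does provide (the proportional case being ruled out anyway by $\|g\|,\|h\|\le\rho=\|\tfrac12(g+h)\|_{L^p(\mathbb S^2)}$), is that the midpoint of two distinct points of the closed ball $\overline{B}_\rho$ has norm strictly less than $\rho$.
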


\begin{proof}
Define
\[\mathcal B =\{g\in L^p(\mathbb S^2)\mid \|g\|_{L^p(\mathbb S^2)}\leq \|f\|_{L^p(\mathbb S^2)},\,\,f\in\mathcal R\},\]
\[\mathcal S =\{g\in L^p(\mathbb S^2)\mid \|g\|_{L^p(\mathbb S^2)}=\|f\|_{L^p(\mathbb S^2)},\,\,f\in\mathcal R\}. \]
Note that $\mathcal B$ and $\mathcal S$ are well-defined since  the $L^p$ norm of $f$ does not depend on the choice of $f$ in $\mathcal R.$ It is clear that
\begin{equation}\label{bsbs}
 \overline{\mathcal R^{w}}\subset\mathcal  B.
\end{equation}
Let $\{f_n\}\subset\mathcal R$ be a sequence such that $f_n$ converges  to some $\tilde f\in   \overline{\mathcal R^{w}}$ strongly in $L^p(\mathbb S^2)$ as $n\to+\infty.$ Then it is obvious that  $\tilde f\in\mathcal  S$. Therefore  $\tilde f$ must be an extreme point of $\mathcal B$ since $L^p(\mathbb S^2)$ is a strictly convex space when $1<p<+\infty$. Taking into account \eqref{bsbs}, we deduce that $\tilde f$ is also an extreme point of  $\overline{\mathcal R^{w}}$. Hence
 $\tilde f\in\mathcal R$ by  Lemma \ref{recvx}(ii).
\end{proof}

The following standard result is stated as a lemma since it is frequently used in the sequel.
\begin{lemma}\label{ucvx}
Let $1<p<\infty$. Let $\{f_n\}$ be a sequence in $L^p(\mathbb S^2)$ such that $f_n$ converges to some $f$ weakly in $L^p(\mathbb S^2)$, and
\[\lim_{n\to+\infty}\|f_n\|_{L^p(\mathbb S^2)}=\|f\|_{L^p(\mathbb S^2)}.\]
Then $f_n$ converges to $f$ strongly in $L^p(\mathbb S^2)$.
\end{lemma}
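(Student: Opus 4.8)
The plan is to exploit the uniform convexity of $L^p(\mathbb S^2)$ for $1<p<+\infty$, which follows from Clarkson's inequalities, together with the weak lower semicontinuity of the norm. The underlying mechanism is the Radon--Riesz (Kadec--Klee) property: in a uniformly convex space, weak convergence combined with convergence of the norms forces strong convergence. First I would dispose of the trivial case $f\equiv 0$, in which $\|f_n\|_{L^p(\mathbb S^2)}\to 0$ and so $f_n\to 0$ strongly at once; hence I may assume $f\not\equiv 0$ in what follows.

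For $p\geq 2$ I would apply Clarkson's first inequality to the pair $f_n,f$, in the form
\[
\left\|\frac{f_n-f}{2}\right\|_{L^p(\mathbb S^2)}^p\leq \frac12\left(\|f_n\|_{L^p(\mathbb S^2)}^p+\|f\|_{L^p(\mathbb S^2)}^p\right)-\left\|\frac{f_n+f}{2}\right\|_{L^p(\mathbb S^2)}^p.
\]
The first term on the right converges to $\|f\|_{L^p(\mathbb S^2)}^p$ by the hypothesis on the norms. Since $f_n$ converges weakly to $f$, the averages $(f_n+f)/2$ also converge weakly to $f$, so weak lower semicontinuity of the norm yields $\liminf_{n}\|(f_n+f)/2\|_{L^p(\mathbb S^2)}^p\geq \|f\|_{L^p(\mathbb S^2)}^p$. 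Combining these two facts, the right-hand side has nonpositive limit superior, whence $\|f_n-f\|_{L^p(\mathbb S^2)}\to 0$.

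For $1<p<2$ the cleanest route is to package everything through uniform convexity rather than to invoke Clarkson's second inequality directly. Setting $g_n=f_n/\|f_n\|_{L^p(\mathbb S^2)}$ and $g=f/\|f\|_{L^p(\mathbb S^2)}$ for $n$ large (so that $f_n\neq 0$), one checks that $g_n$ converges weakly to $g$ and $\|g_n\|_{L^p(\mathbb S^2)}=\|g\|_{L^p(\mathbb S^2)}=1$. Weak lower semicontinuity gives $\liminf_n\|(g_n+g)/2\|_{L^p(\mathbb S^2)}\geq 1$, while the triangle inequality gives $\|(g_n+g)/2\|_{L^p(\mathbb S^2)}\leq 1$; hence $\|(g_n+g)/2\|_{L^p(\mathbb S^2)}\to 1$. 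Uniform convexity of $L^p(\mathbb S^2)$ then forces $\|g_n-g\|_{L^p(\mathbb S^2)}\to 0$: otherwise some subsequence stays a fixed distance $\varepsilon$ from $g$, and the modulus of convexity would bound the corresponding averages below $1$, a contradiction. Finally $f_n=\|f_n\|_{L^p(\mathbb S^2)}g_n$ and the convergence of the norms give $\|f_n-f\|_{L^p(\mathbb S^2)}\to 0$.

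The only genuinely delicate point is the range $1<p<2$, where Clarkson's first inequality is unavailable and his second inequality is weaker and awkward to apply termwise; this is precisely why I would route that case through the abstract uniform-convexity argument. Everything else is routine, the essential ingredient in all cases being that weak lower semicontinuity of the $L^p$ norm supplies the lower bound on the averaged term $(f_n+f)/2$ that the convexity estimate needs.
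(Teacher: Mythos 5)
Your proof is correct, and it is essentially the same argument the paper relies on: the paper simply cites Section 3.7 of Brezis's book, which establishes exactly this Radon--Riesz property of uniformly convex spaces via the normalization/uniform-convexity argument (and Clarkson's inequalities) that you write out. You have merely supplied the details that the paper delegates to the reference.
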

\begin{proof}
See Section 3.7 in \cite{Bre}.
\end{proof}

 \section{Stability criterion}\label{sec4}
The aim of this section is to prove a general stability criterion for subsequent use. The proof relies on the conservation law \eqref{cl2} and the transport nature of the vorticity equation \eqref{ave}.
This section is mostly inspired by Burton \cite{BAR}. See also Theorem 3.2 in \cite{Wang}.

Throughout this section,  let $1<p<+\infty$ be fixed. Given $\mathcal A\subset L^{p}(\mathbb S^{2})$ and $\mathcal B\subset\mathcal A,$ we say that $\mathcal B$ is isolated in $\mathcal A$ (with respect to $L^{p}$ norm) if  
\begin{equation}\label{deisd}
\inf_{f\in\mathcal B,\,g\in\mathcal A\setminus\mathcal B}\|f-g\|_{L^{p}(\mathbb S^{2})}>0.
\end{equation}
Note that we set 
\[\inf_{f\in\mathcal B,\,g\in\mathcal A\setminus\mathcal B}\|f-g\|_{L^{p}(\mathbb S^{2})}=+\infty\]
if $\mathcal B$ or $\mathcal A\setminus \mathcal B$ is empty.

\begin{theorem}[Stability criterion]\label{stac}
Let $\mathcal R$ be the set of rearrangements of some function in $\mathring L^p(\mathbb S^2)$ (cf. \eqref{uu01}). Let $\mathcal E: \mathring L^p(\mathbb S^2)\mapsto\mathbb R$ be a functional satisfying the following conditions:
 \begin{itemize}
     \item[(i)] $\mathcal E$ is locally uniformly continuous in $\mathring L^p(\mathbb S^2)$, i.e., for any $r>0$ and any $\varepsilon>0,$ there exists $\delta>0,$ such that for any $f_1, f_2\in\mathring L^p(\mathbb S^2)$ with
   $\|f_1\|_{L^p(\mathbb S^2)}\leq r,$ $\|f_2\|_{L^p(\mathbb S^2)}\leq r,$
   \[\|f_1- f_2\|_{L^p(\mathbb S^2)}<\delta\quad\Longrightarrow\quad \|\mathcal E(f_1)-\mathcal E (f_2)\|_{L^p(\mathbb S^2)}<\varepsilon.\]
     In particular, $\mathcal E$ is continuous in $\mathring L^p(\mathbb S^2).$
     \item[(ii)]  $\mathcal E$ is a flow invariant of the vorticity equation \eqref{ave}, i.e., for any smooth solution $\zeta$ of the vorticity equation \eqref{ave}, it holds that
     \[\mathcal E(\zeta_t)=\mathcal E(\zeta_0),\quad\forall\,t>0.\]
 \end{itemize}
Let $\mathcal M$ be a nonempty subset of $\mathcal R$  satisfying the following assumptions:
  \begin{itemize}
     \item [(a)] There exists $M\in\mathbb R$ such that
     \[\mathcal M=\{f\in\mathcal R\mid \mathcal E(f)=M\}.\]
     \item[(b)]    For any   sequence $\{f_n\}\subset\mathcal R$ such that
     \[ \lim_{n\to+\infty}\mathcal E(f_n)=M,\]
     there are a subsequence $\{f_{n_j}\}$ of $\{f_n\}$ and some $\tilde f\in\mathcal M$ such that
     \[\lim_{j\to+\infty}\|f_{n_j}-\tilde f\|_{L^p(\mathbb S^2)}=0.\]
 \end{itemize}
 Let $\mathcal N\subset\mathcal M$ be nonempty. If $\mathcal N$ is isolated in $\mathcal M$ (cf. \eqref{deisd}), then $\mathcal N$ is a stable set of the vorticity equation \eqref{ave} in $L^p$ norm of the absolute vorticity, i.e.,  for  any $\varepsilon>0,$ there exists $\delta>0$, such that  for any smooth solution $\zeta$ of the vorticity equation \eqref{ave},
 \begin{equation}\label{impp1}
 \min_{f\in\mathcal N}\|f-\zeta_0\|_{L^p(\mathbb S^2)}<\delta\quad\Longrightarrow\quad \min_{f\in\mathcal N}\|f-\zeta_t\|_{L^p(\mathbb S^2)}<\varepsilon\,\,\forall\,t>0.
 \end{equation}
\end{theorem}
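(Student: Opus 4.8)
The plan is to argue by contradiction, using conservation of $\mathcal E$ to pin the energy of the perturbed flow at the critical level $M$, the transport structure of \eqref{ave} to relocate the evolved vorticity into the rearrangement class $\mathcal R$, and finally the compactness hypothesis (b) together with the isolation of $\mathcal N$ to produce a contradiction.

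First I would set $d:=\inf_{f\in\mathcal N,\,g\in\mathcal M\setminus\mathcal N}\|f-g\|_{L^p(\mathbb S^2)}\in(0,+\infty]$, which is positive by the isolation assumption, and reduce to proving \eqref{impp1} for a fixed $\varepsilon\in(0,d)$ (the claim for larger $\varepsilon$ follows from that for smaller ones, using the same $\delta$). Suppose the conclusion fails for such an $\varepsilon$. Then there exist smooth solutions $\zeta^{(n)}$ and times $t_n>0$ with $\mathrm{dist}(\zeta^{(n)}_0,\mathcal N)<1/n$ and $\mathrm{dist}(\zeta^{(n)}_{t_n},\mathcal N)\ge\varepsilon$, where $\mathrm{dist}(u,\mathcal N):=\inf_{f\in\mathcal N}\|f-u\|_{L^p(\mathbb S^2)}$ is $1$-Lipschitz in $u$. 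Since $t\mapsto\zeta^{(n)}_t$ is continuous into $L^p(\mathbb S^2)$, the intermediate value theorem yields a time $\tau_n\in(0,t_n]$ with $\mathrm{dist}(\zeta^{(n)}_{\tau_n},\mathcal N)=\varepsilon$; write $w_n:=\zeta^{(n)}_{\tau_n}$.

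Next I would pick $g_n\in\mathcal N\subset\mathcal R$ with $\|\zeta^{(n)}_0-g_n\|_{L^p(\mathbb S^2)}<1/n$. All elements of $\mathcal R$ share a common $L^p$ norm, so all sequences involved are bounded in $L^p(\mathbb S^2)$; since $\mathcal E(g_n)=M$, the local uniform continuity (i) gives $\mathcal E(\zeta^{(n)}_0)\to M$, and flow invariance (ii) then gives $\mathcal E(w_n)=\mathcal E(\zeta^{(n)}_0)\to M$. The key step is to replace $w_n$, which lies in the rearrangement class of $\zeta^{(n)}_0$ rather than in $\mathcal R$, by a genuine element of $\mathcal R$. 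Here the transport nature of \eqref{ave} is essential: because $\mathbf v=J\nabla\psi$ is smooth and divergence-free, its flow induces a measure-preserving diffeomorphism $\Phi_n$ of $\mathbb S^2$ with $w_n=\zeta^{(n)}_0\circ\Phi_n^{-1}$. Setting $h_n:=g_n\circ\Phi_n^{-1}$, measure-preservation shows $h_n\in\mathcal R_{g_n}=\mathcal R$ and $\|w_n-h_n\|_{L^p(\mathbb S^2)}=\|\zeta^{(n)}_0-g_n\|_{L^p(\mathbb S^2)}\to0$; applying (i) once more yields $\mathcal E(h_n)\to M$.

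Finally, $\{h_n\}\subset\mathcal R$ with $\mathcal E(h_n)\to M$, so by hypothesis (b) a subsequence converges strongly to some $\tilde f\in\mathcal M$, and since $\|w_n-h_n\|_{L^p(\mathbb S^2)}\to0$ the corresponding subsequence of $\{w_n\}$ also converges to $\tilde f$. Continuity of $\mathrm{dist}(\cdot,\mathcal N)$ then forces $\mathrm{dist}(\tilde f,\mathcal N)=\varepsilon>0$, so $\tilde f\in\mathcal M\setminus\mathcal N$; but isolation gives $\mathrm{dist}(\tilde f,\mathcal N)\ge d>\varepsilon$, a contradiction. I expect the main obstacle to be precisely the relocation of $w_n$ into $\mathcal R$: the compactness hypothesis (b) speaks only about sequences lying in $\mathcal R$, whereas a general perturbation $\zeta^{(n)}_0$ need not be a rearrangement of the reference function, and it is exactly the area-preserving transport structure of the vorticity equation that bridges this gap.
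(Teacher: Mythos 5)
Your proof is correct and rests on the same two pillars as the paper's own argument: the ``follower'' construction (composing a nearby element of $\mathcal R$ with the area-preserving flow map to relocate the evolved perturbation back into $\mathcal R$, which is exactly how the paper bridges the gap you identify at the end), and the time-continuity of $t\mapsto\zeta_t$ combined with the isolation of $\mathcal N$. The organization differs, though: the paper proceeds in two stages, first proving that $\mathcal M$ itself is a stable set (a sequential statement proved via the follower and hypothesis (b)), and then deducing stability of $\mathcal N$ by a separate real-variable argument on the distance functions $\rho_{\mathcal M}$, $\rho_{\mathcal N}$, $\rho_{\mathcal M\setminus\mathcal N}$, using their continuity in time and a $d/2$ threshold. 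You merge the two stages into a single contradiction: the intermediate value theorem pins times $\tau_n$ at which $\mathrm{dist}(\zeta^{(n)}_{\tau_n},\mathcal N)$ equals exactly $\varepsilon\in(0,d)$, and the follower-plus-compactness step is applied at those pinned times, so the limit $\tilde f\in\mathcal M$ sits at distance exactly $\varepsilon$ from $\mathcal N$ and directly contradicts isolation. Your single-pass version is leaner (no standalone $\mathcal M$-stability statement, no $d/2$ bookkeeping), while the paper's version isolates the stability of $\mathcal M$ as a reusable intermediate fact; a further cosmetic difference is that the paper's follower transports a fixed limit function $f\in\mathcal M$, whereas yours transports the approximating elements $g_n\in\mathcal N$ — both work equally well.
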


\begin{remark}
 Theorem \ref{stac} only requires $\mathcal M$ to be a set of $L^{p}$ functions, which means that it can  be applied to nonsmooth solutions. See Theorems \ref{ast1} and \ref{thmzonal} in Section \ref{sec5}.
\end{remark}

\begin{remark}\label{crek}
By the assumptions (a) and (b), one can easily check that $\mathcal M$ is compact in $L^p(\mathbb S^2)$. As a consequence, if   $\mathcal N$ is isolated in $\mathcal M$, then   $\mathcal N$ and $\mathcal M\setminus \mathcal N$ are also compact in $L^p(\mathbb S^2)$.
\end{remark}

\begin{remark}
In subsequent applications,  $\mathcal M$ is usually chosen as the set of maximizers or minimizers of $\mathcal E$ relative to $\mathcal R,$ or equivalently,
\[M=\max_{\mathcal  R}\mathcal E\quad\mbox{or}\quad M=\min_{\mathcal  R}\mathcal E.\]
\end{remark}

\begin{proof}

First we show that $\mathcal M$ is a  stable set. It suffices to show that for any  sequence of   solutions $\{\zeta^n\}$ of the vorticity equation \eqref{ave} and any sequence $\{t_n\}\subset\mathbb R_+$, if
 \begin{equation}\label{s34}
 \lim_{n\to+\infty}\|\zeta_0^n-f\|_{L^p(\mathbb S^2)}=0
 \end{equation}
for some $f\in\mathcal M,$
then there exits some subsequence $\{\zeta^{n_j}_{t_{n_j}}\}$ of $\{\zeta^{n}_{t_{n}}\}$ such that
\begin{equation}\label{s342}
  \lim_{j\to+\infty}\|\zeta_{t_{n_j}}^{n_j}-\tilde f\|_{L^p(\mathbb S^2)}= 0
 \end{equation}
 for some $\tilde f\in\mathcal M.$
If the  perturbations belong to $\mathcal R,$ i.e., $\{\zeta_t^n\}\subset\mathcal R$  for any $t\geq0$ (which only requires $\{\zeta_0^n\}\subset\mathcal R$ by \eqref{cl2}),   then the proof is easy:
  \begin{align*}
  \lim_{n\to+\infty}\|\zeta^n_0-f\|_{L^p(\mathbb S^2)}=0
   & \Longrightarrow \lim_{n\to+\infty} \mathcal E(\zeta^n_{0})=M\quad(\mbox{by  (i) and (a)})\\
     & \Longrightarrow   \lim_{n\to+\infty}\mathcal E(\zeta^n_{t_n})=M\quad(\mbox{by (ii)})\\
   & \Longrightarrow \lim_{n\to+\infty}\|\zeta^n_{t_n}-\tilde f\|_{L^p(\mathbb S^2)}=0 \mbox{ for some $\tilde f\in\mathcal M$}\quad(\mbox{by (b)}).
  \end{align*}
To deal with general perturbations,
 we follow Burton's idea in \cite{BAR} (see also \cite{BR,Cap,WZ,WGu2}). Let $\mathbf v^n=J\nabla \mathcal G( 2\omega\sin\theta-\zeta^n)$ be the velocity field related to $\zeta^n$, and let $\Phi^{n}:\mathbb S^2\times[0,+\infty)\mapsto\mathbb S^2$ be the corresponding flow map,  i.e.,
\begin{equation}\label{flowmap}
\begin{cases}
\frac{d\Phi^{n}(\mathbf x,t)}{dt}=\mathbf v^n( \Phi^{n}(\mathbf x,t),t),&t>0,\\
\Phi^{n}(\mathbf x,t)=\mathbf x,&\forall\,\mathbf x\in  \mathbb S^2.
\end{cases}
\end{equation}
Denote by $\Psi^{n}(t,\cdot)$ the inverse map of $\Phi^{n}(t,\cdot)$. Define a sequence of ``followers" $\{\upsilon^n\}$ as follows:
\begin{equation}\label{naf3}
\upsilon^n_t(\mathbf x)=f(\Psi^{n}(\mathbf x,t)),\quad\mathbf x\in\mathbb S^2,\,t\geq 0.
\end{equation}
Since $\mathbf v^{n}$ is  divergence-free, we see that both  $\Phi^{n}$ and  $\Psi^{n}$ are area-preserving. As a result, we have that
\begin{equation}\label{pfth8i}
\upsilon^n_t\in  {\mathcal R}_{f}=\mathcal R,\quad \forall\,t\geq 0.
\end{equation}
On the other hand, by the vorticity-transport formula (cf. \cite{MB}, Section 1.6),
\begin{equation}\label{naf4}
\zeta^n_t(\mathbf x) =\zeta^n_0(\Psi^{n}(\mathbf x,t))\quad \forall\,\mathbf x\in\mathbb S^2,\,t\ge 0.
\end{equation}
From \eqref{naf3} and \eqref{naf4},
we obtain
\[(\upsilon^n_t-\zeta^n_t)(\mathbf x)=(f-\zeta^n_0)(\Psi^{n}(\mathbf x,t)),\quad \forall\,\mathbf x\in\mathbb S^2,\,t\ge 0,\]
which implies that
\[\upsilon^n_t-\zeta^n_t\in\mathcal R_{f-\zeta^n_0},\quad\forall\,t\ge 0.\]
  In particular,
  \begin{equation}\label{s692}
  \lim_{n\to+\infty}\|\upsilon^n_{t_n}-\zeta^n_{t_n}\|_{L^p(\mathbb S^2)}=\lim_{n\to+\infty}\|f-\zeta^n_0 \|_{L^p(\mathbb S^2)}=0.
  \end{equation}
Now \eqref{s342} follows from the following argument:
  \begin{align*}
   &\lim_{n\to+\infty}\|\zeta^n_0-f\|_{L^p(\mathbb S^2)} =0  \\
   \Longrightarrow&  \lim_{n\to+\infty} \mathcal E(\zeta^n_{0})=M\quad(\mbox{by  (i)})\\
   \Longrightarrow  & \lim_{n\to+\infty}\mathcal E(\zeta^n_{t_n})=M\quad(\mbox{by (ii)})\\
 \Longrightarrow & \lim_{n\to+\infty}\mathcal E(\upsilon^n_{t_n})=M  \quad (\mbox{by \eqref{s692} and (i)})\\
    \Longrightarrow &\lim_{j\to+\infty}\|\upsilon^{n_j}_{t_{n_j}}-\tilde f\|_{L^p(\mathbb S^2)}=0\,\,\mbox{ for some $\{\upsilon^{n_j}_{t_{n_j}}\}\subset\{\upsilon^n_{t_n}\}$  and   $\tilde f\in\mathcal M$}\quad (\mbox{by (b) and \eqref{pfth8i}})\\
 \Longrightarrow &\lim_{j\to+\infty}\|\zeta^{n_j}_{t_{n_j}}-\tilde f\|_{L^p(\mathbb S^2)}= 0\quad (\mbox{by \eqref{s692}}).
  \end{align*}

Having proved that $\mathcal M$ is stable, we can easily show that  
$\mathcal N$ is also stable by using the time continuity of solutions. Suppose $\mathcal N\subsetneqq \mathcal M$. Then both $\mathcal N$ and $\mathcal M\setminus \mathcal N$ are nonempty and compact in $L^p(\mathbb S^2)$   (cf. Remark \ref{crek}).
Denote
\begin{equation}\label{pds}
d:=\min_{f\in\mathcal N,\, g\in\mathcal M\setminus\mathcal N}\|f-g\|_{L^p(\mathbb S^2)}>0.
\end{equation}
 Fix $0<\varepsilon< {d}/{2}.$
  Since  $\mathcal M$ is stable, there exists $\delta>0$, such that for any    smooth solution $\zeta$  of the vorticity equation \eqref{ave},    if
  \begin{equation}\label{bu21}
 \min_{f\in\mathcal N}\|\zeta_{0}-f\|_{L^{p}(\mathbb S^{2})}<\delta,
 \end{equation}
 then
\begin{equation}\label{bu22}
 \min_{f\in\mathcal M}\|\zeta_{t}-f\|_{L^{p}(\mathbb S^{2})}<\varepsilon,\quad\forall\,t>0.
\end{equation}
Without loss of generality, we assume that
$
\delta< {d}/{2}.
$
Denote
\[\rho_{\mathcal M}(t)= \min_{f\in\mathcal M}\|\zeta_{t}-f\|_{L^{p}(\mathbb S^{2})},\,\, \rho_{\mathcal N}(t)= \min_{f\in\mathcal N}\|\zeta_{t}-f\|_{L^{p}(\mathbb S^{2})},\,\, \rho_{\mathcal M\setminus \mathcal N}(t)= \min_{f\in\mathcal M\setminus\mathcal N}\|\zeta_{t}-f\|_{L^{p}(\mathbb S^{2})}.\]
Then it is easy to check that
\begin{equation}\label{bu91}
\rho_{\mathcal M}(t)=\min\{\rho_{\mathcal N}(t), \rho_{\mathcal M\setminus \mathcal N}(t)\},\quad \forall\,t\geq 0,
\end{equation}
\begin{equation}\label{bu919}
 \rho_{\mathcal N}(t)+\rho_{\mathcal M\setminus \mathcal N}(t)\geq d,\quad \forall\,t\geq 0.
\end{equation}
Since  $\zeta$ is   smooth, it holds that $\zeta\in C([0,+\infty);L^p(\mathbb S^2))$, which implies that   $\rho_{\mathcal M},$ $\rho_{\mathcal N}$ and $\rho_{\mathcal M\setminus \mathcal N}$ are all continuous functions over $[0,+\infty).$
From \eqref{bu22}, we have that
\begin{equation}\label{bu99}
\rho_{\mathcal M}(t)<\varepsilon<\frac{d}{2},\quad\forall\,t>0.
\end{equation}
It follows from \eqref{bu21}   that
\begin{equation}\label{bu92}
\rho_{\mathcal N}(0)<\delta<\frac{d}{2}.
\end{equation}
Below we show that
$\rho_{\mathcal N}(t)<\varepsilon$ for any $t>0.$
Suppose by contradiction that there exists some $t_{1}>0$ such that
\begin{equation}\label{bu232}
\rho_{\mathcal N}(t_{1})\geq \varepsilon.
\end{equation}
From  \eqref{bu91}, \eqref{bu99} and \eqref{bu232}, we have  that
 \begin{equation}\label{bu24}
\rho_{\mathcal M\setminus \mathcal N}(t_{1}) <\varepsilon<\frac{d}{2}.
\end{equation}
which in combination with \eqref{bu919} yields
 \begin{equation}\label{bu242}
 \rho_{\mathcal N}(t_{1})>\frac{d}{2},
 \end{equation}
 From \eqref{bu92}, \eqref{bu242} and the continuity of $\rho_{\mathcal N}$,  there exists some $t_{2}>0$ such that
   \begin{equation}\label{bu31}
\rho_{\mathcal N}(t_{2})=\frac{d}{2}.
\end{equation}
From \eqref{bu919} and \eqref{bu31}, we have that
   \begin{equation}\label{bu318}
\rho_{\mathcal M\setminus \mathcal N}(t_{2})\geq \frac{d}{2},
\end{equation}
which together with \eqref{bu91} gives
   \begin{equation}\label{bu3187}
\rho_{\mathcal M }(t_{2})=\frac{d}{2}.
\end{equation}
This is an obvious contradiction to   \eqref{bu99}.
 To summarize, we have proved that for any $0<\varepsilon<d/2$, there exists some $\delta>0$, such that  for any    smooth solution $\zeta$  of the vorticity equation \eqref{ave},    if
  \begin{equation}\label{bu216}
 \min_{f\in\mathcal N}\|\zeta_{0}-f\|_{L^{p}(\mathbb S^{2})}<\delta,
 \end{equation}
 then
\begin{equation}\label{bu226}
 \min_{f\in\mathcal N}\|\zeta_{t}-f\|_{L^{p}(\mathbb S^{2})}<\varepsilon,\quad\forall\,t>0.
\end{equation}
Thus the stability of $\mathcal N$ has been verified.
\end{proof}

\begin{remark}\label{lessregular}
In Theorem \ref{stac}, we only consider smooth perturbations to avoid some technical discussions. Checking the proof, the
 stability  may hold for less regular perturbations  as long as the they satisfy the conserved laws involved in the proof, belong to $C([0,+\infty);L^p(\mathbb S^2))$, and admit an area-preserving flow map
such that we can construct a ``follower'' of the perturbed solution.
   \end{remark}

\section{Warm-up: applications of stability criterion}\label{sec5}

In this section, we apply Theorem \ref{stac} to prove the stability of some steady/traveling solutions of the Euler equation in the $L^p$ setting, including  degree-1 RH waves,  Arnold-type flows, and zonal flows with monotone absolute vorticity.
This section can explain our ideas well and serve as a warm-up for the proofs  in Sections \ref{sec6} and \ref{sec7}.

\subsection{Stability of degree-1 RH waves}\label{sec51}

In Section \ref{sec24}, we have proved the  stability of degree-1 RH orbit in $L^2$ norm  of the absolute vorticity  based on the conservation laws \eqref{cl3} and \eqref{cl4}. Now with Theorem \ref{stac} at hand,  we can show that the  stability actually holds in $L^p$ norm   of the absolute vorticity for any $1<p<+\infty.$
    \begin{theorem}\label{sd1rhw}
Let $1<p<+\infty$, and  let  $Y\in\mathbb E_1$ with $Y\neq 0$.
 \begin{itemize}
    \item [(i)]If   $\omega=0,$ then  $Y$  is stable in $L^p$ norm of the absolute vorticity as in Theorem \ref{stac}.
    \item[(ii)] For general $\omega\in\mathbb R,$   the RH orbit $\mathcal H^+_{Y}$  is  stable in $L^p$ norm of the absolute vorticity as in Theorem \ref{stac}. In particular, $Y$ is stable in $L^p$ norm of the absolute vorticity if $Y$ is zonal (i.e., $Y=\beta Y_{1}^{0}$ for some $\beta\in\mathbb R$).
\end{itemize}
    \end{theorem}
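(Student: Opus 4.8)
The plan is to apply the stability criterion of Theorem~\ref{stac} to the conserved functional
\[
\mathcal E(\zeta):=\frac12\int_{\mathbb S^2}\zeta\,\mathcal G\zeta\,d\sigma,
\]
taking $\mathcal R=\mathcal R_Y$. Since $\mathcal E(\zeta)=\frac12\sum_{j\geq1}\sum_{|m|\le j}|c_j^m|^2/(j(j+1))$, it is flow-invariant by \eqref{cl1} for every $\omega\in\mathbb R$, and its local uniform continuity on $\mathring L^p(\mathbb S^2)$ follows from the boundedness of $\mathcal G$ (Lemma~\ref{pg0}) together with H\"older's inequality. The first step is the variational characterization $M:=\max_{\mathcal R_Y}\mathcal E$ with $\mathcal M:=\{f\in\mathcal R_Y\mid\mathcal E(f)=M\}$. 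Writing $f=\mathbb P_1 f+\mathbb P_1^\perp f$ as in \eqref{potrj}, using that $\mathcal G$ acts as $\tfrac12\mathrm{Id}$ on $\mathbb E_1$ and that the Poincar\'e-type bound \eqref{p1} controls the orthogonal part, I obtain
\[
\int_{\mathbb S^2} f\,\mathcal Gf\,d\sigma\le\frac12\|\mathbb P_1 f\|_{L^2(\mathbb S^2)}^2+\frac16\|\mathbb P_1^\perp f\|_{L^2(\mathbb S^2)}^2.
\]
Since $\|f\|_{L^2(\mathbb S^2)}=\|Y\|_{L^2(\mathbb S^2)}$ is constant on $\mathcal R_Y$ and the weight $\tfrac14$ of $\|\mathbb P_1 f\|_{L^2}^2$ strictly exceeds the weight $\tfrac1{12}$ of $\|\mathbb P_1^\perp f\|_{L^2}^2$, the bound is maximal exactly when $\mathbb P_1^\perp f=0$. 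Identifying $\mathbb E_1$ with the linear functions $\mathbf a\cdot\mathbf x$ on $\mathbb S^2$ and noting that two of them are rearrangements precisely when $|\mathbf a|$ agrees, this gives $\mathcal M=\mathcal R_Y\cap\mathbb E_1=\mathcal O_Y$, the sphere of degree-$1$ harmonics of the size of $Y$.

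Next I would verify the compactness hypothesis (b) of Theorem~\ref{stac}. Given a maximizing sequence $\{f_n\}\subset\mathcal R_Y$, it is bounded in $L^2(\mathbb S^2)$ and in $L^p(\mathbb S^2)$, so along a subsequence $f_n\rightharpoonup\tilde f$ weakly in $L^p(\mathbb S^2)$; the compactness of $\mathcal G$ (Lemma~\ref{pg0}(ii)) renders $\mathcal E$ weakly sequentially continuous, whence $\mathcal E(\tilde f)=M$. Because $\|\tilde f\|_{L^2(\mathbb S^2)}\le\|Y\|_{L^2(\mathbb S^2)}$ by weak lower semicontinuity, the displayed inequality forces equality throughout, so $\mathbb P_1^\perp\tilde f=0$ and $\|\tilde f\|_{L^2(\mathbb S^2)}=\|Y\|_{L^2(\mathbb S^2)}$; thus $\tilde f\in\mathcal O_Y=\mathcal M\subset\mathcal R_Y$. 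Then $f_n\rightharpoonup\tilde f$ together with $\|f_n\|_{L^p(\mathbb S^2)}=\|\tilde f\|_{L^p(\mathbb S^2)}$ and Lemma~\ref{ucvx} upgrade the convergence to strong convergence in $L^p(\mathbb S^2)$. With (a) and (b) in hand, Theorem~\ref{stac} applied with $\mathcal N=\mathcal M=\mathcal O_Y$ (the isolation condition being vacuous) yields stability of the orbit $\mathcal O_Y$ in $L^p$ norm of the absolute vorticity.

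Finally I would refine this orbital stability into the two assertions by invoking the linear conservation law \eqref{cl3}, using that the projection $\mathbb P_1:\mathring L^p(\mathbb S^2)\to\mathbb E_1$ is a bounded operator into a finite-dimensional space. If $\zeta_t$ is $L^p$-close to some $Y^t=\mathbf a_t\cdot\mathbf x\in\mathcal O_Y$ with $|\mathbf a_t|=|\mathbf a|$, then $\mathbb P_1\zeta_t$ is close to $Y^t$ in $\mathbb E_1$. In part (ii), for general $\omega$, the conserved coefficient $c_1^0$ keeps the $x_3$-component $(\mathbf a_t)_3$ near $a_3$; combined with $|\mathbf a_t|=|\mathbf a|$ this pins the horizontal magnitude of $\mathbf a_t$ near that of $\mathbf a$, so $Y^t$ lies near the circle $\mathcal H^+_Y$ and hence $\zeta_t$ is close to $\mathcal H^+_Y$. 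When $Y$ is zonal this circle degenerates to $\{Y\}$, giving the stated special case. In part (i) with $\omega=0$, \eqref{cl3} conserves all three coefficients $c_1^m$, so $\mathbf a_t$ stays near $\mathbf a$ and $\zeta_t$ is close to $Y$ itself. The identification of the limiting sets with rotations about the polar axis is exactly the computation recorded in Section~\ref{sec24}, relying on Lemma~\ref{ce1} of Appendix~\ref{appb}.

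I expect the main obstacle to be this last upgrade rather than the variational or compactness steps: a plain application of Theorem~\ref{stac} only controls $\zeta_t$ near the full continuum $\mathcal O_Y$, inside which the target sets $\mathcal H^+_Y$ and $\{Y\}$ are \emph{not} isolated, so Theorem~\ref{stac} cannot distinguish them by itself. One must therefore combine the nonlinear orbital stability quantitatively with the sphere-specific linear conservation law \eqref{cl3} to single out the correct element of $\mathcal O_Y$. By contrast, the variational characterization and the compactness are made transparent here by the spectral gap encoded in \eqref{p1}, which forces every maximizer into $\mathbb E_1$ directly.
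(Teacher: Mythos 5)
Your proposal is correct, but it takes a genuinely different route from the paper's proof. The paper designs a separate conserved functional for each part so that the maximizer set relative to $\mathcal R_Y$ is \emph{exactly} the target set: for (i) it uses $\mathcal E(f)=|c_1^0+a|^2+|c_1^1+b|^2+|c_1^{-1}+c|^2$ (flow-invariant only when $\omega=0$), whose unique maximizer over $\mathcal R_Y$ is $Y$; for (ii) it uses $\mathcal E(f)=a\,c_1^0+|c_1^1|^2$ (flow-invariant for every $\omega$), whose maximizer set is $\mathcal H_Y=\mathcal H^+_Y$ by Lemma \ref{ce1} and \eqref{y1eq}; in both cases Theorem \ref{stac} then applies with $\mathcal N=\mathcal M$ and nothing further is needed. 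You instead maximize the pure energy $\frac12\int_{\mathbb S^2}f\mathcal Gf\,d\sigma$, which by the spectral gap \eqref{p1} has maximizer set the whole orbit $\mathcal O_Y=\mathcal R_Y\cap\mathbb E_1$ --- essentially the same mechanism the paper uses for degree-2 waves with $\alpha=0$ in Section \ref{sec7} --- and you then step outside the framework of Theorem \ref{stac} to pin down the correct element of $\mathcal O_Y$ by a second, quantitative use of the conservation law \eqref{cl3} ($\mathbb P_1\zeta_t=\mathbb P_1\zeta_0$ when $\omega=0$; conservation of $c_1^0$ combined with $|\mathbf a_t|=|\mathbf a|$ in general, which suffices since elements of $\mathcal H^+_Y$ are exactly those degree-1 states with the same vertical component and the same horizontal magnitude). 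Your closing diagnosis is accurate and worth stressing: this post-processing cannot be avoided with your functional, because $\{Y\}$ and $\mathcal H^+_Y$ are not isolated in the connected set $\mathcal O_Y$, so the criterion alone cannot distinguish them, whereas the paper sidesteps the issue entirely by folding \eqref{cl3} into the functional. What your route buys is a single transparent variational step shared by both parts, plus a compactness argument that shortcuts Lemma \ref{rl3} (the weak limit is identified directly as an element of $\mathcal O_Y\subset\mathcal R_Y$); what the paper's route buys is that the entire proof stays inside the stability criterion, with the refinement to $\{Y\}$ or $\mathcal H^+_Y$ emerging from the variational characterization itself rather than from a separate limiting argument layered on top of it.
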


 \begin{remark}
 Note that Theorem \ref{sd1rhw}(ii) is optimal if $\omega\neq0$.
In fact, given $Y\in\mathbb E_{1},$  the solution to the vorticity equation \eqref{ave} with initial absolute vorticity $Y$ has the following expression (cf. \eqref{dg1rh}):
 \[\zeta_{t}(\varphi,\theta)=Y(\varphi+\omega t,\theta).\]
  If $\omega\neq 0,$ then  it is easy to see that
  \[\{\zeta_{t}\mid t>0\}=\mathcal H^{+}_{Y},\]
  which implies that $\mathcal H^{+}_{Y}$ is the smallest stable set containing $Y$.
 \end{remark}

  \begin{proof}[Proof of Theorem \ref{sd1rhw}(i)]
  First we establish a suitable variational characterization for $Y$.
 Suppose $Y$ has the form
 \[Y=aY_1^0+bY_1^1+cY_1^{-1},\quad b=-\overline c,\quad a\in\mathbb R.\]
 For  $f\in \mathring L^p(\mathbb S^2)$, define
 \begin{equation}\label{ed1}
 \mathcal E(f)=|c_1^0+  a|^2+|c_1^1+b|^2+|c_1^{-1}+  c|^2,
 \end{equation}
 where
 \[c_1^0=\int_{\mathbb S^2}f\overline{Y_1^0}d\sigma,\quad c_1^1=\int_{\mathbb S^2}f\overline{Y_1^1}d\sigma,\quad c_1^{-1}=\int_{\mathbb S^2}f\overline{Y_1^{-1}}d\sigma.\]
Note that if $f\in\mathring L^2(\mathbb S^2),$ $c_1^0, c_1^1, c_1^{-1}$ are exactly the Fourier coefficients of $f$ related to the spherical harmonics $Y_1^0, Y_1^1, Y_1^{-1}$, respectively.  By Lemma \ref{pg0}(ii), it is easy to check that $\mathcal E$ is well-defined in $\mathring L^p(\mathbb S^2)$,   is  weakly continuous in $\mathring L^p(\mathbb S^2)$, and is uniformly continuous in any bounded subset of $\mathring L^p(\mathbb S^2)$ (i.e., $\mathcal E$ satisfies the condition (i) in Theorem \ref{stac}). Moreover, by the conservation law \eqref{cl3}, $\mathcal E$ is a flow invariant (i.e., $\mathcal E$ satisfies the condition (ii) in Theorem \ref{stac}).

 We claim that \begin{equation}\label{lf990}
     \mbox{$Y$ is the unique maximizer of $\mathcal E$ relative to $\mathcal R_Y$}.
 \end{equation}
 In fact, for any
 $f\in\mathcal R_Y$, it holds that $f\in\mathring L^{2}(\mathbb S^{2})$, thus we can expand  $f$ in terms of the basis $\{Y_j^m\}_{j\ge 1, |m|\le j}$ as follows:
 \[f=\sum_{j=1}^{+\infty}\sum_{|m|\leq j}c_j^mY_j^m,\quad c_j^m=\int_{\mathbb S^2}f\overline{Y_j^m}d\sigma.\]
 Then we have that
 \begin{equation}\label{pptt}
 \begin{split}
 \mathcal E(f)&\leq 2(|c_1^0|^2+  |a|^2+|c_1^1|^2+|b|^2+|c_1^{-1}|^2+  |c|^2)\\
 &=2(|a|^2+ |b|^2 +  |c|^2)+2\left(\|f\|^2_{L^2(\mathbb S^2)}-\sum_{j\geq 2}\sum_{|m|\leq j }|c_j^m|^2\right)\\
  &=2(|a|^2+ |b|^2 +  |c|^2)+2\left(\|Y\|^2_{L^2(\mathbb S^2)}-\sum_{j\geq 2}\sum_{|m|\leq j }|c_j^m|^2\right)\quad \mbox{(by $f\in\mathcal R_Y$)}\\
    &=4(|a|^2+ |b|^2 +  |c|^2)-2 \sum_{j\geq 2}\sum_{|m|\leq j }|c_j^m|^2 \\
 &\leq 4(|a|^2+ |b|^2 +  |c|^2).
 \end{split}
 \end{equation}
 Moreover, it is easy to see that  the first inequality in \eqref{pptt} is an equality if and only if
 \[c_1^0=a,\quad c_1^1=b,\quad c_1^{-1}=c,\]
  and the
last inequality  in \eqref{pptt} is an equality if and only if
 \[c_{j}^m=0, \quad \forall\,j\geq 2,\,|m|\leq j.\]
In other words, we have proved that \[\mathcal E(f)\leq 4(|a|^2+ |b|^2 +  |c|^2),\quad\forall\,f\in\mathcal R_Y,\]
and the equality holds if and only if $f=Y$. Therefore $Y$ is the unique maximizer of $\mathcal E$ relative to $\mathcal R_Y.$

Now we prove compactness. More precisely, we will show that  for any  sequence $\{f_n\}\subset\mathcal R_Y$ such that
\begin{equation}\label{cnm1}
\lim_{n\to+\infty}\mathcal E(f_{n})=\mathcal E(Y),
\end{equation}
it holds that
 \begin{equation}\label{cnmm}
 \lim_{n\to+\infty}\|f_{n}-Y\|_{L^{p}(\mathbb S^{2})}=0.
\end{equation}
Since $\{f_n\}$ is obviously bounded in $L^p(\mathbb S^2),$ there is a subsequence, denoted by $\{f_{n_j}\}$, such that   $f_{n_j}$ converges weakly to some  $\tilde f$ in $L^p(\mathbb S^2)$ as $j\to+\infty.$ It is clear that  $f_{n_j}$ also converges weakly to $\tilde f$ in $L^2(\mathbb S^2)$ as $j\to+\infty$. By the weak lower semicontinuity of the $L^2$ norm, we have that
 \begin{equation}\label{ias1}
 \|\tilde f\|_{L^2(\mathbb S^2)}\leq \|Y\|_{L^2(\mathbb S^2)}.
 \end{equation}
 By \eqref{cnm1} and the  weak continuity of $\mathcal E$ in $L^2(\mathbb S^2)$, we have that
 \begin{equation}\label{ias2}
 \mathcal E(\tilde f)=\mathcal E(Y)=4(|a|^2+ |b|^2 +  |c|^2).
  \end{equation}
Expand  $\tilde f$ as
 \[\tilde f=\sum_{j=1}^{+\infty}\sum_{|m|\leq j}\tilde c_j^mY_j^m,\quad \tilde c_j^m=\int_{\mathbb S^2}\tilde f\overline{Y_j^m}d\sigma.\]
  Similar to \eqref{pptt}, we have that
   \begin{equation}\label{pptt2}
 \begin{split}
 \mathcal E(\tilde f)&\leq 2(|\tilde c_1^0|^2+  |a|^2+|\tilde c_1^1|^2+|b|^2+|\tilde c_1^{-1}|^2+  |c|^2)\\
 &=2(|a|^2+ |b|^2 +  |c|^2)+2\left(\|\tilde f\|^2_{L^2(\mathbb S^2)}-\sum_{j\geq 2}\sum_{|m|\leq j }|\tilde c_j^m|^2\right)\\
  &\leq 2(|a|^2+ |b|^2 +  |c|^2)+2\left(\|Y\|^2_{L^2(\mathbb S^2)}-\sum_{j\geq 2}\sum_{|m|\leq j }|\tilde c_j^m|^2\right)\quad \mbox{(by \eqref{ias1})}\\
    &=4(|a|^2+ |b|^2 +  |c|^2)-2 \sum_{j\geq 2}\sum_{|m|\leq j }|\tilde c_j^m|^2 \\
 &\leq 4(|a|^2+ |b|^2 +  |c|^2).
 \end{split}
 \end{equation}
In combination with \eqref{ias2}, we deduce that  the three  inequalities in \eqref{pptt2} are in fact  equalities. In particular,
$\|\tilde f\|_{L^2(\mathbb S^2)}= \|Y\|_{L^2(\mathbb S^2)},$ and thus $\|f_{n_j}\|_{L^2(\mathbb S^2)}=\|\tilde f\|_{L^2(\mathbb S^2)}$ for any $j$ (since $\{f_{n_j}\}\subset\mathcal R_Y$).
Then by Lemma \ref{ucvx},  $f_{n_j}$ converges strongly to $\tilde f$ in $L^2(\mathbb S^2)$ as $j\to+\infty$. Applying Lemma \ref{rl3}, we obtain $\tilde f\in\mathcal R_Y,$ which together with   \eqref{lf990} and \eqref{ias2} yields  $\tilde f=Y$. In particular,
  $\|f_{n_j}\|_{L^p(\mathbb S^2)}=\|\tilde f\|_{L^p(\mathbb S^2)}$ for any $j$.
Applying Lemma \ref{ucvx} again, we see that  $f_{n_j}$  converges to $\tilde f=Y$ strongly in $L^p(\mathbb S^2)$ as $j\to+\infty.$ By a standard contradiction argument, we can further show that $f_{n}$ converges to $Y$ strongly in $L^p(\mathbb S^2)$ as $n\to+\infty.$ Hence the required compactness has been verified.

Applying Theorem \ref{stac} (with $\mathcal E$ defined by \eqref{ed1}  and $\mathcal N=\mathcal M=\{Y\}$), we deduce that $Y$ is stable as required.

\end{proof}

\begin{proof}[Proof of Theorem \ref{sd1rhw}(ii)]
First we prove a variational characterization for $\mathcal H^+_Y.$  Suppose $Y$ has the form
 \[Y=aY_1^0+bY_1^1+cY_1^{-1},\quad b=-\overline c,\quad a\in\mathbb R.\]
For $f\in  \mathring L^p(\mathbb S^2),$ define
 \begin{equation}\label{ed2}
 \mathcal E(f)=a c_1^0+   |c_1^{1}|^2,\quad c_1^0=\int_{\mathbb S^2}f\overline{Y_1^0}d\sigma,\quad c_1^1=\int_{\mathbb S^2}f\overline{Y_1^1}d\sigma.
 \end{equation}
  Then $\mathcal E$ is well-defined and weakly continuous in $\mathring L^p(\mathbb S^2),$ and satisfies the conditions (i)(ii) in Theorem \ref{stac}. It is easy to check that
    \begin{equation}\label{yeqt}
    \mathcal E(Y)= a^2+|b|^2.
    \end{equation}
   For any $f\in\mathcal R_Y\subset\mathring L^2(\mathbb S^2)$, expanding  $f$ as
 \[f=\sum_{j=1}^{+\infty}\sum_{|m|\leq j}c_j^mY_j^m,\quad c_j^m=\int_{\mathbb S^2}f\overline{Y_j^m}d\sigma,\]
 we have that
   \begin{equation}\label{9op}
   \begin{split}
   \mathcal E(f)&= a c_1^0 +\frac{1}{2} \left(\|f\|_2^2-|c_1^0|^2-\sum_{j\geq 2}\sum_{|m|\leq j}|c_j^m|^2\right)\quad(\mbox{using  $|c_1^1|=|c_1^{-1}|$})\\
   &\leq   a c_1^0 +\frac{1}{2} \left(\|f\|_2^2-|c_1^0|^2\right)\\
   &=  -\frac{1}{2}|c_1^0|^2 +a c_1^0 +\frac{1}{2}  \|Y\|_{L^{2}(\mathbb S^{2})}^2 \\
   &\leq\frac{1}{2}a^2+\frac{1}{2}\|Y\|_2^2\\ &=a^2+|b|^2\quad(\mbox{using  $ \|Y\|_{L^2(\mathbb S^2)}^2=a^2+2|b|^2$})\\
   &=\mathcal E(Y)\quad(\mbox{by \eqref{yeqt}}).
   \end{split}
   \end{equation}
   Moreover, the two inequalities in \eqref{9op} are equalities if and only if
   \[c_1^0=a,\quad c_j^m=0,\quad\forall\,j\geq 2.\]
In other words, $f$ is a maximizer of $\mathcal E$ relative to $\mathcal R_Y$ if and only if
\[f=aY_1^0+b'Y_1^1+c'Y_1^{-1},\quad b'=-\overline{c'},\quad |b'|=|b|.\]
In view of Lemma \ref{ce1} in  Appendix \ref{appb}, we deduce that
 $\mathcal H_Y$ is exactly the set of maximizers of $\mathcal E$ relative to $\mathcal R_Y$. Taking into account \eqref{y1eq}, we obtain  the following variational characterization for $\mathcal H^+_Y$:
\begin{equation}\label{hym1}
\mbox{ $\mathcal H^+_Y$ is exactly the set of maximizers of $\mathcal E$ relative to $\mathcal R_Y$.}
\end{equation}

Next we prove compactness, i.e.,  for any   sequence $\{f_n\}\subset\mathcal R_Y$ such that
\begin{equation}\label{ias6}
\lim_{n\to+\infty}\mathcal E(f_n)=\mathcal E(Y)=a^2+|b|^2,
\end{equation}
there is a subsequence, denoted by $\{f_{n_j}\}$, such that
  $f_{n_j} \to \tilde f$ strongly in $L^p(\mathbb S^2)$ as $j\to+\infty$ for some $\tilde f\in\mathcal H^+_Y$.
 In fact, since $\{f_n\}$ is obviously bounded in $L^p(\mathbb S^2)$, there is a subsequence $\{f_{n_j}\}$ converging to some $\tilde f$  weakly  in $L^p(\mathbb S^2),$ and thus  weakly in $L^2(\mathbb S^2)$, as $j\to+\infty$. By the weak lower semicontinuity of the $L^2$ norm and the weak continuity of $\mathcal E$ in $L^{2}(\mathbb S^{2})$, we have that
 \begin{equation}\label{ias9}
 \|\tilde f\|_{L^2(\mathbb S^2)}\leq \|Y\|_{L^2(\mathbb S^2)},\quad \mathcal E(\tilde f)=\mathcal E(Y)=a^2+|b|^2.
 \end{equation}
 Repeating the argument in \eqref{9op}, we deduce from \eqref{ias6} and \eqref{ias9} that
$\|\tilde f\|_{L^2(\mathbb S^2)} =\|Y\|_{L^2(\mathbb S^2)}.$  Hence $f_{n_j}$ converges strongly to $\tilde f$ in $L^2(\mathbb S^2)$ by Lemma \ref{ucvx}. Applying Lemma \ref{rl3}, we obtain $\tilde f\in\mathcal R_Y,$ which in combination with   \eqref{ias6} implies that $\tilde f$ is a maximizer of $\mathcal E$ relative to $\mathcal R_Y$. Hence $\tilde f\in\mathcal H^+_Y$ by \eqref{hym1}. Applying Lemma \ref{ucvx} again, we see that   $f_{n_j}$ converges  to   $\tilde f$ strongly in $L^p(\mathbb S^2)$  as  $j\to+\infty$.

Applying Theorem \ref{stac} (with  $\mathcal E$ defined by \eqref{ed2} and $\mathcal N=\mathcal M=\mathcal H^+_Y$), 
the stability of $\mathcal H^+_Y$ follows immediately.
\end{proof}

   \subsection{Arnold's first stability theorem}\label{sec52}

 Suppose  $\tilde \psi\in \mathring C^2(\mathbb S^2)$ solves the elliptic equation \eqref{semm}.
 Arnold's first stability theorem (cf. \cite{A1,A2,A3,MP}) in the classical sense asserts that,
    if \[\mathfrak g\in C^1(\mathbb R),\quad \inf_{\mathbb R}\mathfrak g'>0,\]
    then the corresponding steady flow is stable in  $H^2$ norm of the stream function (or equivalently, the $L^2$ norm of the absolute vorticity).
 In this subsection, we show that this theorem can be improved, in the sense that the conditions on $\tilde\psi$ and $\mathfrak g$ can be weakened, and the stability conclusion can be strengthened.

  Our result will be stated in terms of the absolute vorticity. Recall that  $\tilde \psi$ solves \eqref{semm} if and only the corresponding absolute vorticity $\tilde\zeta=\Delta\tilde\psi+2\omega\sin\theta$ satisfies
  \[\tilde \zeta=\mathfrak g(\omega\sin\theta-\mathcal G\tilde\zeta).\]
   \begin{theorem}\label{ast1}
         Let $1<p<+\infty.$ Let    $\tilde \zeta\in\mathring L^p(\mathbb S^2)$ satisfy
         \begin{equation}\label{lli}
         \tilde\zeta=\mathfrak g(\omega\sin\theta-\mathcal G\tilde\zeta)\quad\mbox{\rm a.e. on }\mathbb S^2,
         \end{equation}
         where $\mathfrak g:\mathbb R\mapsto\mathbb R\cup\{\pm\infty\}$ is increasing, i.e., $\mathfrak g(s_1)\leq \mathfrak g(s_2)$ whenever $s_1\leq s_2.$
         Then $\tilde\zeta$ is stable in $L^p$ norm of the absolute vorticity as in Theorem \ref{stac}.
    \end{theorem}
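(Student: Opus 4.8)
The plan is to invoke the stability criterion (Theorem \ref{stac}) with $\mathcal R=\mathcal R_{\tilde\zeta}$, with $\mathcal N=\mathcal M=\{\tilde\zeta\}$, and with the conserved functional taken to be (a constant shift of) the kinetic energy,
\[
\mathcal E(\zeta)=\frac12\int_{\mathbb S^2}\zeta\,\mathcal G\zeta\,d\sigma-\omega\int_{\mathbb S^2}\zeta\sin\theta\,d\sigma .
\]
First I would check the two hypotheses of Theorem \ref{stac}. Flow invariance (condition (ii)) follows from the conservation laws: $\int_{\mathbb S^2}\zeta\mathcal G\zeta\,d\sigma$ equals the quantity $\sum_{j,m}|c_j^m|^2/(j(j+1))$ conserved by \eqref{cl1}, while $\int_{\mathbb S^2}\zeta\sin\theta\,d\sigma$ is a fixed multiple of $c_1^0(t)$, conserved by \eqref{cl3}. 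Local uniform continuity (condition (i)) is immediate from the identity $\tfrac12(\int f_1\mathcal G f_1-\int f_2\mathcal G f_2)=\tfrac12\int(f_1-f_2)\mathcal G(f_1+f_2)\,d\sigma$ together with the boundedness of $\mathcal G$ from $\mathring L^p(\mathbb S^2)$ into $\mathring L^{p/(p-1)}(\mathbb S^2)$ (Lemma \ref{pg0}).

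The heart of the argument is the variational characterization: I claim $\tilde\zeta$ is the \emph{unique} minimizer of $\mathcal E$ over $\mathcal R_{\tilde\zeta}$. Writing $\tilde\psi=\omega\sin\theta-\mathcal G\tilde\zeta$ as in \eqref{gsint2} and using the symmetry of $\mathcal G$ (Lemma \ref{pg0}(iii)), a direct expansion gives, for every $\zeta\in\mathcal R_{\tilde\zeta}$,
\[
\mathcal E(\zeta)-\mathcal E(\tilde\zeta)=\frac12\int_{\mathbb S^2}(\zeta-\tilde\zeta)\,\mathcal G(\zeta-\tilde\zeta)\,d\sigma-\int_{\mathbb S^2}\tilde\psi\,(\zeta-\tilde\zeta)\,d\sigma .
\]
The first term is nonnegative by the positive-definiteness of $\mathcal G$, and vanishes only when $\zeta=\tilde\zeta$ (Lemma \ref{pg0}(iv)). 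For the second term, hypothesis \eqref{lli} says $\tilde\zeta=\mathfrak g(\tilde\psi)$ with $\mathfrak g$ increasing; since $\mathcal G\tilde\zeta\in\mathring W^{2,p}(\mathbb S^2)\hookrightarrow L^\infty(\mathbb S^2)$ for $1<p<+\infty$, we have $\tilde\psi\in L^{p/(p-1)}(\mathbb S^2)$, so Lemma \ref{lm33} applies with $g=\tilde\psi$ and yields $\int_{\mathbb S^2}\zeta\tilde\psi\,d\sigma\le\int_{\mathbb S^2}\tilde\zeta\tilde\psi\,d\sigma$, i.e. $-\int_{\mathbb S^2}\tilde\psi(\zeta-\tilde\zeta)\,d\sigma\ge0$. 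Hence $\mathcal E(\zeta)\ge\mathcal E(\tilde\zeta)$ with equality if and only if $\zeta=\tilde\zeta$. This proves the claim and shows that $\mathcal M=\{\tilde\zeta\}$ is exactly the level set $\{f\in\mathcal R_{\tilde\zeta}\mid\mathcal E(f)=M\}$ at the minimal value $M=\mathcal E(\tilde\zeta)$, so assumption (a) of Theorem \ref{stac} holds.

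It then remains to verify the compactness condition (b). Given $\{f_n\}\subset\mathcal R_{\tilde\zeta}$ with $\mathcal E(f_n)\to M$, boundedness in $L^p(\mathbb S^2)$ provides a subsequence $f_{n_j}\rightharpoonup\hat f$ weakly. Since $\mathcal G$ is compact (Lemma \ref{pg0}(ii)), $\mathcal G(f_{n_j}-\tilde\zeta)\to\mathcal G(\hat f-\tilde\zeta)$ strongly, so passing to the limit in the displayed identity gives
\[
0=\frac12\int_{\mathbb S^2}(\hat f-\tilde\zeta)\,\mathcal G(\hat f-\tilde\zeta)\,d\sigma-\int_{\mathbb S^2}\tilde\psi\,(\hat f-\tilde\zeta)\,d\sigma .
\]
The weak limit $\hat f$ lies in the weak closure $\overline{\mathcal R_{\tilde\zeta}^{\,w}}$, which by Lemma \ref{recvx} is the closed convex hull of $\mathcal R_{\tilde\zeta}$; as $\zeta\mapsto\int_{\mathbb S^2}\zeta\tilde\psi\,d\sigma$ is linear, $\tilde\zeta$ still maximizes it over this hull, so the second term above is again $\ge0$, while the first is $\ge0$. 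Both must therefore vanish, forcing $\hat f=\tilde\zeta$. Thus $f_{n_j}\rightharpoonup\tilde\zeta$ with $\|f_{n_j}\|_{L^p(\mathbb S^2)}=\|\tilde\zeta\|_{L^p(\mathbb S^2)}$, and Lemma \ref{ucvx} upgrades this to strong convergence $f_{n_j}\to\tilde\zeta$ in $L^p(\mathbb S^2)$. This verifies (b) with $\tilde f=\tilde\zeta$, and since $\mathcal N=\mathcal M=\{\tilde\zeta\}$ is trivially isolated in $\mathcal M$, Theorem \ref{stac} delivers the stability of $\tilde\zeta$.

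I expect the pivotal and most delicate step to be the variational characterization, namely the clean energy identity above and the recognition that the monotonicity of $\mathfrak g$ is precisely Burton's rearrangement optimality (Lemma \ref{lm33}), which simultaneously forces $\tilde\zeta$ to be an isolated minimizer and supplies the sign needed in the compactness step. The remaining analysis is comparatively routine, the one subtlety being that the weak limit need only belong to the weak closure of $\mathcal R_{\tilde\zeta}$, which is resolved by extending the rearrangement inequality to the convex hull via Lemma \ref{recvx}.
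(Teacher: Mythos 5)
Your proposal is correct and follows essentially the same route as the paper: the same functional $\mathcal E(\zeta)=\tfrac12\int_{\mathbb S^2}\zeta\,\mathcal G\zeta\,d\sigma-\omega\int_{\mathbb S^2}\zeta\sin\theta\,d\sigma$, the variational characterization of $\tilde\zeta$ via Burton's rearrangement optimality (Lemma \ref{lm33}) combined with positive-definiteness of $\mathcal G$, compactness via weak limits, Lemma \ref{recvx} and Lemma \ref{ucvx}, and finally Theorem \ref{stac} with $\mathcal N=\mathcal M=\{\tilde\zeta\}$. The only cosmetic difference is organizational: the paper first proves uniqueness of the minimizer over the weak closure $\overline{\mathcal R^w_{\tilde\zeta}}$ by strict convexity and then identifies it with $\tilde\zeta$, whereas you prove the minimality identity over $\mathcal R_{\tilde\zeta}$ directly and extend the linear-term inequality to the weak closure only in the compactness step — the ingredients and conclusions are identical.
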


    \begin{remark}
    By Theorem 4(i) in \cite{CG}, if $\tilde\zeta$ satisfies \eqref{lli} with \[ \omega=0,\quad \mathfrak g\in C^1(\mathbb R),\quad \inf_{\mathbb R}\mathfrak g>-2,\]
    then $\tilde \zeta\equiv 0.$ In this case, the conclusion of Theorem \ref{ast1} is trivial. However, when $\omega\neq 0$, this assertion may fail. A simple example is that when $\omega=1$ and
    \[ \mathfrak g: s\mapsto 2s,\quad s\in\mathbb R,\]
    the equation \eqref{lli} admits a nontrivial solution $\tilde \zeta=\sin\theta$.

    \end{remark}

  \begin{proof}
First we establish a suitable  variational characterization for $\tilde \zeta.$
   Define
    \begin{equation}\label{lf5}
    \mathcal E(f)=\frac{1}{2}\int_{\mathbb S^2}f\mathcal Gf d\sigma-\omega \int_{\mathbb S^2}\sin\theta f d\sigma.
    \end{equation}
  Then by Lemma \ref{pg0}(ii), $\mathcal E$ is  well-defined and weakly continuous in $\mathring L^p(\mathbb S^2)$,    and satisfies the conditions (i)  in Theorem \ref{stac}. Moreover, by the conservation laws \eqref{cl1} and \eqref{cl3}, $\mathcal E$ is a flow invariant of the vorticity equation \eqref{ave} (i.e., $\mathcal E$ satisfies the conditions (ii)  in Theorem \ref{stac}).

    We claim that
    \begin{equation}\label{obar}
    \mbox{$\tilde\zeta$ is the unique minimizer of $\mathcal E$ relative to $\overline{\mathcal R ^w_{\tilde\zeta}}.$}
    \end{equation}
To show this, first notice that $\mathcal E$ is strictly convex in $\mathring L^p(\mathbb S^2)$.  In fact, for any
  $s\in (0,1)$ and $\zeta_1,$ $\zeta_2\in \mathring L^p(\mathbb S^2)$ with $\zeta_1\neq \zeta_2$, applying Lemma \ref{pg0}(iii)(iv), we have that
\begin{align*}
    \mathcal E(s\zeta_1+(1-s)\zeta_2)&=s\mathcal E(\zeta_1)+(1-s)\mathcal E(\zeta_2)-\frac{s(1-s)}{2}\int_{\mathbb S^2}(\zeta_1-\zeta_2)\mathcal G(\zeta_1-\zeta_2)d\sigma\\
    &<s \mathcal E(\zeta_1)+(1-s)\mathcal E(\zeta_2).
\end{align*}
On the other hand, since $\overline{\mathcal R^{w}_{\tilde \zeta}}$, the weak closure of $\mathcal R_{\tilde\zeta}$ in $L^p(\mathbb S^2)$, is convex by  Lemma \ref{recvx}, we  deduce that
 $\mathcal E$ has a unique minimizer relative to   $\overline{\mathcal R^{w}_{\tilde \zeta}}$, denoted by $\hat\zeta$.
  To finish the proof, it suffices to show that $\tilde\zeta=\hat\zeta.$ To this end, we argue as follows:
  \begin{equation}\label{lf1}
  \begin{split}
0&\geq \mathcal E(\hat\zeta)-\mathcal E(\tilde\zeta)  \\
&= \frac{1}{2}\int_{\mathbb S^2}\hat\zeta\mathcal G\hat\zeta-\tilde\zeta\mathcal G\tilde\zeta d\sigma-\omega \int_{\mathbb S^2}\sin\theta (\hat\zeta-\tilde\zeta) d\sigma\\
&=\frac{1}{2}\int_{\mathbb S^2}(\hat\zeta-\tilde\zeta)\mathcal G (\hat\zeta-\tilde\zeta) d\sigma+\int_{\mathbb S^2}(\hat\zeta-\tilde\zeta)\mathcal G\tilde\zeta d\sigma-\omega \int_{\mathbb S^2}\sin\theta (\hat\zeta-\tilde\zeta) d\sigma\\
&=\frac{1}{2}\int_{\mathbb S^2}(\hat\zeta-\tilde\zeta)\mathcal G (\hat\zeta-\tilde\zeta) d\sigma+\int_{\mathbb S^2} \tilde\zeta(\omega  \sin\theta-\mathcal G\tilde\zeta ) d\sigma-\int_{\mathbb S^2} \hat\zeta( \omega  \sin\theta-\mathcal G\tilde\zeta ) d\sigma\\
 &\geq \int_{\mathbb S^2} \tilde\zeta(\omega  \sin\theta-\mathcal G\tilde\zeta ) d\sigma-\int_{\mathbb S^2} \hat\zeta( \omega  \sin\theta-\mathcal G\tilde\zeta ) d\sigma\\
 &\geq 0.
\end{split}
\end{equation}
  Note that in the second equality of \eqref{lf1} we have used the symmetry of $\mathcal G$ (cf. Lemma \ref{pg0}(iii)), and in the last inequality   we have used the following fact:
  \[\int_{\mathbb S^2} \tilde\zeta(\omega  \sin\theta-\mathcal G\tilde\zeta ) d\sigma\geq \int_{\mathbb S^2} f( \omega  \sin\theta-\mathcal G\tilde\zeta ) d\sigma,\quad\forall\,f\in\mathcal R_{\tilde\zeta},\]
  which is a consequence of Lemma \ref{lm33} and the condition \eqref{lli}. In view of the positive-definiteness of the operator $\mathcal G$ (cf. Lemma \ref{pg0}(iv)), it is easy to see that the second inequality in \eqref{lf1} is strict if $\tilde\zeta\neq \hat\zeta.$
  Hence the claim \eqref{obar} has been proved.

   Next we verify compactness. Given a sequence $\{f_n\}\subset\mathcal R_{\tilde\zeta}$ satisfying
   \begin{equation}\label{ww1}
   \lim_{n\to+\infty}\mathcal E(f_n)=\mathcal E(\tilde\zeta),
   \end{equation}
  we claim that $f_n$ converges strongly to $\tilde\zeta$ in $L^p(\mathbb S^2)$ as $n\to+\infty.$ Since $\{f_n\}$ is obviously bounded  in $L^p(\mathbb S^2)$, there is  a subsequence  $\{f_{n_j}\}$ such that $f_{n_j}$ converges weakly to some  $\tilde f$ in $L^p(\mathbb S^2)$ as $j\to+\infty.$ It is clear that $\tilde f\in \overline{\mathcal R^{w}_{\tilde \zeta}}$.  Moreover, in view of \eqref{ww1} and the weak continuity of $\mathcal E$, we have that
  \[\mathcal E(\tilde f)=\mathcal E(\tilde \zeta),\]
  which together with \eqref{obar} yields   $\tilde f=\tilde\zeta$. In particular, $\tilde f\in\mathcal R_{\tilde\zeta}.$
Applying  Lemma \ref{ucvx}, we deduce that $f_{n_{j}}$ converges strongly to $\tilde f=\tilde\zeta$ in $L^p(\mathbb S^2)$ as $j\to+\infty.$  By a contradiction argument, we can further show that  $f_{n}$ converges to $\tilde\zeta$ strongly in $L^p(\mathbb S^2)$ as $n\to+\infty.$

Having proved the variational characterization  for $\tilde\zeta$ and the related compactness, the desired stability of $\tilde \zeta$ follows  from Theorem \ref{stac} immediately.
  \end{proof}

Checking the proof of Theorem \ref{sd1rhw},  we have the following interesting corollary.
\begin{corollary}\label{cor555}
  Let $\tilde\zeta$ be as in Theorem \ref{sd1rhw}.
  \begin{itemize}
  \item[(i)] If $\omega=0$, then $\tilde\zeta\equiv 0.$
    \item[(ii)] For general $\omega\in\mathbb R$,  $\tilde\zeta$ must be zonal (i.e., $\tilde\zeta$ depends only on   $\theta$).
  \end{itemize}
\end{corollary}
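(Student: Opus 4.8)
The plan is to exploit the \emph{uniqueness} in the variational characterization of $\tilde\zeta$ established in the proof of Theorem~\ref{ast1}: namely, that $\tilde\zeta$ is the unique minimizer of the energy functional $\mathcal E$ in \eqref{lf5} over the weak closure $\overline{\mathcal R^w_{\tilde\zeta}}$ (cf. \eqref{obar}). Both conclusions will follow by combining this uniqueness with two structural features of $\mathcal E$, one rotational and one positivity.

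For part (ii), I would argue that $\tilde\zeta$ inherits the rotational symmetry of the problem. Let $\mathsf g_\beta$ denote the rotation about the polar axis sending $(\varphi,\theta)$ to $(\varphi+\beta,\theta)$. Three facts combine: (a) $\mathsf g_\beta$ is measure-preserving, so $\tilde\zeta\circ\mathsf g_\beta\in\mathcal R_{\tilde\zeta}\subset\overline{\mathcal R^w_{\tilde\zeta}}$; (b) $\sin\theta$ is invariant under $\mathsf g_\beta$, and the Laplace--Beltrami operator commutes with the isometry $\mathsf g_\beta$, whence $\mathcal G(\tilde\zeta\circ\mathsf g_\beta)=(\mathcal G\tilde\zeta)\circ\mathsf g_\beta$; (c) the integrals defining $\mathcal E$ are invariant under the measure-preserving change of variables $\mathsf g_\beta$. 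Together (b) and (c) yield $\mathcal E(\tilde\zeta\circ\mathsf g_\beta)=\mathcal E(\tilde\zeta)$, so $\tilde\zeta\circ\mathsf g_\beta$ is again a minimizer of $\mathcal E$ over $\overline{\mathcal R^w_{\tilde\zeta}}$. By the uniqueness in \eqref{obar}, $\tilde\zeta\circ\mathsf g_\beta=\tilde\zeta$ for every $\beta\in\mathbb R$, which is exactly the assertion that $\tilde\zeta$ is zonal.

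For part (i), with $\omega=0$ the functional reduces to $\mathcal E(f)=\tfrac12\int_{\mathbb S^2}f\mathcal Gf\,d\sigma$, which by the positive-definiteness of $\mathcal G$ (Lemma~\ref{pg0}(iv)) satisfies $\mathcal E(f)\geq 0$, with equality if and only if $f\equiv 0$. I would then observe that the zero function lies in $\overline{\mathcal R^w_{\tilde\zeta}}$: since $\tilde\zeta$ has zero mean, the constant function equal to its mean (namely $0$) belongs to the closed convex hull of $\mathcal R_{\tilde\zeta}$, which by Lemma~\ref{recvx}(i) coincides with $\overline{\mathcal R^w_{\tilde\zeta}}$ (equivalently, increasingly oscillatory rearrangements of $\tilde\zeta$ converge weakly to $0$). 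Consequently $\mathcal E(\tilde\zeta)\leq\mathcal E(0)=0$, while $\mathcal E(\tilde\zeta)\geq 0$; hence $\mathcal E(\tilde\zeta)=0$, and the positive-definiteness forces $\tilde\zeta\equiv 0$.

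The main obstacle is the verification in part (i) that $0\in\overline{\mathcal R^w_{\tilde\zeta}}$; this is the only step requiring genuine rearrangement theory beyond the lemmas already at hand, and it rests on the Hardy--Littlewood description of the closed convex hull of a rearrangement class as the set of functions dominated by $\tilde\zeta$, which always contains the mean-value constant. The remaining ingredients---the commutation of $\mathcal G$ with polar-axis rotations and the resulting invariance of $\mathcal E$---are routine once the isometry-invariance of $\Delta$ is noted, and the passage from ``is a minimizer'' to ``equals $\tilde\zeta$'' is handled entirely by the uniqueness already proved in \eqref{obar}.
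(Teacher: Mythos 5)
Your proposal is correct. Part (ii) is exactly the paper's argument: both $\mathcal E$ and $\mathcal R_{\tilde\zeta}$ are invariant under rotations about the polar axis, so each rotated copy $\tilde\zeta\circ\mathsf g_\beta$ is again a minimizer, and the uniqueness in \eqref{obar} forces $\tilde\zeta\circ\mathsf g_\beta=\tilde\zeta$, i.e. $\tilde\zeta$ is zonal. For part (i), however, you take a genuinely different route. The paper simply pushes the same symmetry argument further: when $\omega=0$, both $\mathcal E$ and $\mathcal R_{\tilde\zeta}$ are invariant under the full group $\mathbf S\mathbf O(3)$, so uniqueness gives $\tilde\zeta=\tilde\zeta\circ\mathsf g$ for every $\mathsf g\in\mathbf S\mathbf O(3)$, hence $\tilde\zeta$ is constant, and the zero-mean condition yields $\tilde\zeta\equiv 0$. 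You instead use positivity: with $\omega=0$ one has $\mathcal E(f)=\frac12\int_{\mathbb S^2}f\mathcal Gf\,d\sigma\geq 0$ with equality only at $f\equiv 0$, and since $0\in\overline{\mathcal R^w_{\tilde\zeta}}$, minimality forces $\mathcal E(\tilde\zeta)=0$, hence $\tilde\zeta\equiv 0$. This is valid, but note two things. First, it genuinely requires the uniqueness/minimality over the full weak closure $\overline{\mathcal R^w_{\tilde\zeta}}$ (as \eqref{obar} indeed provides), not merely over $\mathcal R_{\tilde\zeta}$, since $0$ is not a rearrangement of $\tilde\zeta$ unless $\tilde\zeta$ is trivial; the paper's argument, by contrast, only ever compares $\tilde\zeta$ with elements of $\mathcal R_{\tilde\zeta}$ itself. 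Second, the membership $0\in\overline{\mathcal R^w_{\tilde\zeta}}$ is true but is not among the paper's stated lemmas, so you must supply it — either via Burton's majorization description of the weak closure, as you indicate, or more economically by observing that the Haar average $\int_{\mathbf S\mathbf O(3)}\tilde\zeta\circ\mathsf g\,d\mu(\mathsf g)$ lies in the closed convex hull of $\mathcal R_{\tilde\zeta}$ (which equals $\overline{\mathcal R^w_{\tilde\zeta}}$ by Lemma \ref{recvx}(i)), is rotation-invariant, hence constant, and so equals the mean $0$. What your route buys is a sharper structural insight: for $\omega=0$ the constrained problem is trivial because the unconstrained global minimizer of the energy already lies in the constraint set; what the paper's route buys is complete self-containment, using nothing beyond the rotation invariances already in hand.
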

\begin{proof}
By \eqref{obar}, $\tilde\zeta$ is the unique minimizer of $\mathcal E$ relative to $ \mathcal R_{\tilde\zeta},$ where $\mathcal E$ is defined by \eqref{lf5}.

If $\omega=0$, then it is easy to see that both $\mathcal E$ and $\mathcal R_{\tilde\zeta}$ are invariant under the action of the rotation group $\mathbf S\mathbf O(3)$, i.e., for any $\mathsf g\in\mathbf S\mathbf O(3)$,
\[
\mathcal E(f\circ \mathsf g)=\mathcal E(f),
\]
\[
f\in\mathcal R_{\tilde\zeta}\,\,\Longrightarrow\,\, f\circ \mathsf g\in\mathcal R_{\tilde\zeta}.
\]
Therefore for any $\mathsf g\in\mathbf S\mathbf O(3)$, $\tilde\zeta\circ \mathsf g$ is also a maximizer of  $\mathcal E$ relative to $\mathcal R_{\tilde\zeta}.$ By uniqueness, we have that
\[\tilde\zeta=\tilde\zeta\circ \mathsf g,\quad \forall\,\mathsf g\in\mathbf S\mathbf O(3),\]
which implies that $\tilde \zeta$ is constant. On the other hand, since $\tilde\zeta$ is of zero mean, we further deduce that $\tilde\zeta\equiv 0.$

For general $\omega\in\mathbb R$,  both $\mathcal E$ and $\mathcal R_{\tilde\zeta}$  are invariant under rotations about the polar axis. Then,  by uniqueness again,  $\tilde \zeta$ must be zonal.
\end{proof}

\begin{remark}
The interested reader can compare the above corollary with Theorem  4(i) in \cite{CG}.
\end{remark}

\subsection{Arnold's second stability theorem}\label{sec53}

 Suppose  $\tilde \psi\in \mathring C^2(\mathbb S^2)$ solves \eqref{semm}.
  Arnold's second stability theorem (cf. \cite{A1,A2,A3,MP}) in the classical sense asserts that, if
  \begin{equation}\label{lf9}
   \mathfrak g\in C^1(\mathbb R),\quad -2<\inf_{\mathbb R}\mathfrak g'<0,
  \end{equation}
  then the corresponding steady flow is stable in $H^2$ norm of the stream function (or equivalently, the $L^2$ norm of the absolute vorticity).
   Recently, Constantin and Germain (cf. \cite{CG}, Theorem 5) showed that the  conditions \eqref{lf9} can be weakened to
     \begin{equation}\label{lf19}
   \mathfrak g\in C^1(\mathbb R),\quad -6<\inf_{\mathbb R}\mathfrak g'<0.
  \end{equation}
They proved this assertion based on the EC functional method, which involves  the following conserved quantities:
\[\int_{\mathbb S^2}\Omega\sin\theta d\sigma,\,\,\|\mathbb P_1\Omega\|_2,\,\,\int_{\mathbb S^2}|\mathbf v|^2d\sigma,\,\,\int_{\mathbb S^2}\mathfrak F(\Omega+2\omega\sin\theta)d\sigma,\]
where
\begin{equation}\label{lf12}
\mathfrak f=\mathfrak g^{-1},\quad\mathfrak F(s)=\int_0^s\mathfrak f(\tau)d\tau.
\end{equation}

 Below  we give an alternative proof of  Constantin and Germain's result in the $L^p$ setting based on Theorem \ref{stac}. We still state our result in terms of the absolute vorticity.

   \begin{theorem}\label{asst}
    Let $1<p<+\infty$ be fixed.
     Let    $\tilde \zeta\in\mathring L^p(\mathbb S^2)$ satisfy
         \begin{equation}\label{lli2}
         \tilde\zeta=\mathfrak g(\omega\sin\theta-\mathcal G\tilde\zeta)\quad\mbox{\rm  on }\mathbb S^2,
         \end{equation}
         where $\mathfrak g:\mathbb R\mapsto\mathbb R$  satisfies \eqref{lf19}.
          Then $\tilde\zeta$ is stable in $L^p$ norm of the absolute vorticity as in Theorem \ref{stac}.
    \end{theorem}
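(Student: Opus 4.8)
The plan is to deduce Theorem~\ref{asst} from the stability criterion Theorem~\ref{stac} applied to the singleton $\mathcal N=\mathcal M=\{\tilde\zeta\}$, so the whole task reduces to producing a conserved functional $\mathcal E$ for which $\tilde\zeta$ is the \emph{unique} maximizer over the rearrangement class $\mathcal R_{\tilde\zeta}$, together with the compactness of maximizing sequences. In contrast with Theorem~\ref{ast1}, where $\mathfrak g$ is increasing and a strictly convex energy is minimized over a convex set, here $\mathfrak g$ is \emph{decreasing}, the relevant functional is maximized, and its second variation is \emph{indefinite}. The entire role of the hypothesis $-6<\inf_{\mathbb R}\mathfrak g'$ is that this indefiniteness lives only on the first eigenspace $\mathbb E_1$, which is pinned down by the conservation law \eqref{cl3}.

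Concretely, I would take
\[
\mathcal E(f)=\frac12\int_{\mathbb S^2}f\mathcal Gf\,d\sigma-\omega\int_{\mathbb S^2}\sin\theta\,f\,d\sigma-\frac14\|\mathbb P_1 f\|_{L^2(\mathbb S^2)}^2+\frac12\int_{\mathbb S^2}f\,\mathbb P_1\tilde\zeta\,d\sigma .
\]
Each piece is a flow invariant: the energy $\int f\mathcal Gf$ by \eqref{cl1}, and $\|\mathbb P_1 f\|_{L^2}^2$, $\int\sin\theta\,f$ and $\int f\,\mathbb P_1\tilde\zeta$ by \eqref{cl3}. For the last term one first checks that when $\omega\neq0$ a steady state necessarily has $\mathbb P_1\tilde\zeta$ zonal, i.e. $c_1^{\pm1}(\tilde\zeta)=0$, since otherwise these coefficients would rotate in time and violate steadiness; when $\omega=0$ all of $c_1^0,c_1^{\pm1}$ are conserved and no restriction is needed. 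Since $\mathcal G$ and $\mathbb P_1$ are compact by Lemma~\ref{pg0}(ii), $\mathcal E$ is weakly sequentially continuous on $\mathring L^p(\mathbb S^2)$, hence satisfies hypotheses (i)--(ii) of Theorem~\ref{stac}.

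The heart of the argument is the variational characterization. Writing $\psi:=\omega\sin\theta-\mathcal G\tilde\zeta$ and $\eta:=f-\tilde\zeta$, a direct expansion in which the cross terms of the two $\mathbb P_1$-pieces cancel gives the clean identity
\[
\mathcal E(f)-\mathcal E(\tilde\zeta)=-\int_{\mathbb S^2}\eta\,\psi\,d\sigma+\frac12\int_{\mathbb S^2}\mathbb P_1^\perp\eta\,\mathcal G\,\mathbb P_1^\perp\eta\,d\sigma .
\]
For the first term I would use the convexity gain from the profile: since $\mathfrak f=\mathfrak g^{-1}$ satisfies $\mathfrak f'=1/\mathfrak g'\le-\kappa$ with $\kappa:=1/(6-\varepsilon_0)>1/6$ and $-6+\varepsilon_0=\inf_{\mathbb R}\mathfrak g'$, the primitive $\mathfrak F$ of \eqref{lf12} is concave with $\mathfrak F''\le-\kappa$, whence pointwise $\mathfrak F(f)\le\mathfrak F(\tilde\zeta)+\psi(f-\tilde\zeta)-\tfrac\kappa2(f-\tilde\zeta)^2$ (using $\psi=\mathfrak f(\tilde\zeta)$ from \eqref{lli2}); integrating and using $\int\mathfrak F(f)=\int\mathfrak F(\tilde\zeta)$ for $f\in\mathcal R_{\tilde\zeta}$ yields $\int\eta\psi\ge\frac\kappa2\|\eta\|_{L^2(\mathbb S^2)}^2$. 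For the second term I would invoke the Poincar\'e inequality \eqref{p1}, namely $\frac12\int\mathbb P_1^\perp\eta\,\mathcal G\mathbb P_1^\perp\eta\le\frac1{12}\|\mathbb P_1^\perp\eta\|_{L^2}^2\le\frac1{12}\|\eta\|_{L^2}^2$. Combining the two,
\[
\mathcal E(f)-\mathcal E(\tilde\zeta)\le-\Big(\frac\kappa2-\frac1{12}\Big)\|\eta\|_{L^2(\mathbb S^2)}^2<0\qquad(f\neq\tilde\zeta),
\]
so $\tilde\zeta$ is the unique maximizer and $\mathcal M=\{\tilde\zeta\}$. This coercive bound simultaneously yields compactness: for any $\{f_n\}\subset\mathcal R_{\tilde\zeta}$ with $\mathcal E(f_n)\to\mathcal E(\tilde\zeta)$ it forces $f_n\to\tilde\zeta$ strongly in $L^2$, and the upgrade to $L^p$ then follows from $\|f_n\|_{L^p}=\|\tilde\zeta\|_{L^p}$ via Lemmas~\ref{rl3} and \ref{ucvx} (the same chain of inequalities, valid for $f$ in the weak closure $\overline{\mathcal R^{w}_{\tilde\zeta}}$ since $\int\mathfrak F(f)\ge\int\mathfrak F(\tilde\zeta)$ there by concavity of $\mathfrak F$ and Lemma~\ref{recvx}, confirms $M=\mathcal E(\tilde\zeta)$ is genuinely the maximum). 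Feeding this into Theorem~\ref{stac} gives the stability of $\tilde\zeta$.

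The main obstacle is exactly the indefiniteness of the quadratic form and the sharp matching of constants. The bare energy is controlled only on $\mathbb E_1^\perp$, where $\mathcal G\le\tfrac16$, whereas on $\mathbb E_1$ it carries the eigenvalue $\tfrac12$, which the convexity gain $\kappa/2$ cannot absorb; the two $\mathbb P_1$-terms in $\mathcal E$ are engineered precisely to annihilate the $\mathbb E_1$ part of both the energy and its first variation, reducing everything to $\mathbb P_1^\perp\eta$, where the constant $1/6$ of \eqref{p1} is beaten by $\kappa/2$. The delicate points are therefore verifying that these corrective terms are \emph{genuinely conserved}---which is where the zonality of $\mathbb P_1\tilde\zeta$ for $\omega\neq0$ is essential---and ensuring the strict inequality $\kappa>1/6$ so that the estimate is coercive rather than merely non-positive.
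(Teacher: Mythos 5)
Your proposal is correct, and while its skeleton matches the paper's (apply Theorem \ref{stac} to a functional of the form ``energy $-\,\omega$-term $+\,\mathbb E_1$-corrections'', prove uniqueness via concavity of $\mathfrak F=\int\mathfrak f$ together with the Poincar\'e inequality \eqref{p1}, and handle the $\omega\neq 0$ case by noting $c_1^{\pm1}(\tilde\zeta)=0$ — which you prove directly from steadiness and \eqref{cl3}, where the paper cites Theorem 4(iii) of \cite{CG}), it diverges from the paper in two substantive ways. First, the coefficients: the paper takes $-\tfrac16\|\mathbb P_1 f\|_{L^2}^2+\tfrac13\int f\,\mathbb P_1\tilde\zeta$, you take $-\tfrac14\|\mathbb P_1 f\|_{L^2}^2+\tfrac12\int f\,\mathbb P_1\tilde\zeta$; your choice makes the $\mathbb E_1$ part of the quadratic form cancel \emph{exactly} (since $\mathcal G=\tfrac12$ on $\mathbb E_1$), yielding the clean identity $\mathcal E(f)-\mathcal E(\tilde\zeta)=-\int\eta\psi+\tfrac12\int\mathbb P_1^\perp\eta\,\mathcal G\mathbb P_1^\perp\eta$, whereas the paper's choice keeps the quadratic part positive-definite on all of $\mathring L^2$ and lets the $\mathbb E_1$ terms cancel only at the end of a longer chain. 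Second, and more importantly, the compactness argument: the paper uses only the non-coercive strict inequality \eqref{lf29} (with constant exactly $\tfrac1{12}$), so to pass from a weakly convergent maximizing sequence to a limit in $\mathcal R_{\tilde\zeta}$ it must invoke Burton's machinery — strict convexity of $\mathcal E$ on $\mathring L^2$ (equation \eqref{eisc}, which is precisely where the coefficient $\tfrac16$ is essential; with your $\tfrac14$ the form degenerates on $\mathbb E_1$ and that argument would fail) plus the extreme-point characterization Lemma \ref{recvx}(ii). You instead exploit the strict gap in the hypothesis, $\mathfrak f'\le-\kappa$ with $\kappa>\tfrac16$, to get the \emph{coercive} bound $\mathcal E(\tilde\zeta)-\mathcal E(f)\ge(\tfrac\kappa2-\tfrac1{12})\|f-\tilde\zeta\|_{L^2}^2$, which delivers uniqueness and strong $L^2$ compactness of maximizing sequences in one stroke, bypassing the weak-closure/extreme-point theory entirely (your parenthetical about $\overline{\mathcal R^w_{\tilde\zeta}}$ is not even needed, since hypotheses (a)--(b) of Theorem \ref{stac} only involve $\mathcal R$). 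The trade-off: your argument is more elementary and yields an explicit quantitative stability modulus, while the paper's fits the general Burton framework and is the version whose strict-convexity representation it reuses elsewhere; both hinge equally on $\inf_{\mathbb R}\mathfrak g'>-6$ being strict.
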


    \begin{remark}
      We believe that the assumptions on $\tilde\zeta$ and $\mathfrak g$ in Theorem \ref{asst}  can be  further relaxed by applying Wolansky and Ghil's supporting functional method  \cite{WG1,WG2}. See also \cite{WGu1,WZ}.
    \end{remark}

   \begin{proof}
First notice that $\tilde \zeta\in\mathring C^1(\mathbb S^2)$ by standard elliptic regularity theory.
Define
    \begin{align*}
      \mathcal E(f)=\frac{1}{2}\int_{\mathbb S^2}f\mathcal Gf d\sigma-\omega \int_{\mathbb S^2}\sin\theta f d\sigma- \frac{1}{6}\sum_{m=-1,0,1}\left|\int_{\mathbb S^2} f\overline{Y_1^m} d\sigma\right|^2+ \frac{1}{3}  \int_{\mathbb S^2}f\mathbb P_1\tilde\zeta d\sigma.
    \end{align*}
It is easy to check that $\mathcal E$ is  well-defined and weakly continuous in $\mathring L^p(\mathbb S^2),$ and satisfies the condition  (i)  in Theorem \ref{stac}. Moreover, if $f\in \mathring L^2(\mathbb S^2),$ then
\[\mathcal E(f)=\frac{1}{2}\int_{\mathbb S^2}f\mathcal Gf d\sigma-\omega \int_{\mathbb S^2}\sin\theta f d\sigma-\frac{1}{6}\|\mathbb P_1(f-\tilde\zeta)\|^2_{L^2(\mathbb S^2)}+
    \frac{1}{6}\|\mathbb P_1\tilde\zeta\|^2_{L^2(\mathbb S^2)}.\]
We claim that $\mathcal E$ is a flow invariant (i.e., $\mathcal E$ satisfies the condition (ii) in Theorem \ref{stac}).  Since the first two terms in the expression of $\mathcal E$ are both flow invariants by the conservation laws \eqref{cl1} and \eqref{cl3}, it suffices to show that the third term is also a flow invariant.
Notice that for any smooth solution $\zeta$ of the vorticity equation \eqref{ave},
\[\|\mathbb P_1(\zeta_t-\tilde\zeta)\|^2_{L^2(\mathbb S^2)}=|c_1^0(t)-\tilde c_1^0|^2+|c_1^1(t)-\tilde c_1^1|^2+|c_1^{-1}(t)-\tilde c_1^{-1}|^2,\]
 where $\{c_j^m(t)\}$ and $\{\tilde c_j^m\}$ are the Fourier coefficients of $\zeta_t$ and $\tilde\zeta,$ respectively. We distinguish two cases:
 \begin{itemize}
     \item [(i)]If $\omega=0,$ then $c_1^0(t)$,  $c_1^{1}(t)$ and  $c_1^{-1}(t)$ are all flow invariants by the conservation law \eqref{cl3}, and thus $\|\mathbb P_1(\zeta_t-\tilde\zeta)\|_{L^2(\mathbb S^2)}$ is a flow invariant.
     \item [(ii)]If $\omega\neq 0,$ then by Theorem 4(iii) in \cite{CG},
     \[\tilde c_1^1=\tilde c_1^{-1}=0.\]
     Hence
     \[\|\mathbb P_1(\zeta_t-\tilde\zeta)\|^2_{L^2(\mathbb S^2)}=|c_1^0(t)-\tilde c_1^0|^2+|c_1^1(t)|^2+|c_1^{-1}(t)|^2,\]
     which remains  a flow invariant  by the conservation law \eqref{cl3}.
 \end{itemize}

To proceed, we claim that
 \begin{equation}\label{lf080}
\mbox{$\tilde \zeta$ is the unique maximizer of $\mathcal E$ relative to $\mathcal R_{\tilde\zeta}$.}
\end{equation}
   Let $\mathfrak f$ and $\mathfrak F$ be given by \eqref{lf12}. Then $\tilde\zeta$ satisfies
    \begin{equation}\label{lf20}
    \omega\sin\theta-\mathcal G\tilde\zeta=\mathfrak f(\tilde \zeta).
    \end{equation}
    Moreover, in view of \eqref{lf19}, $\mathfrak F$ satisfies
      \begin{equation}\label{lf29}
      \mathfrak F(s)< \mathfrak F(s_0)+ \mathfrak f(s_0)(s-s_0)-\frac{1}{12}(s-s_0)^2\quad\forall\,\,s,s_0\in\mathbb R,\,\,s\neq s_0.
      \end{equation}
For any $f\in\mathcal R_{\tilde\zeta}$ with $f\neq \tilde\zeta$,  we estimate $\mathcal E(\tilde\zeta)-\mathcal E(f)$  as follows:
\begin{align*}
  &\mathcal E(\tilde\zeta)-\mathcal E(f)\\
  =&\mathcal E(\tilde\zeta)-\mathcal E(f)+\int_{\mathbb S^2}\mathfrak F(\tilde\zeta)d\sigma-\int_{\mathbb S^2}\mathfrak F(f)d\sigma\quad\mbox{(using $f\in\mathcal R_{\tilde\zeta}$)}\\
  =&\frac{1}{2}\int_{\mathbb S^2}\tilde\zeta\mathcal G\tilde\zeta -f\mathcal G f d\sigma+\int_{\mathbb S^2}\mathfrak F(\tilde\zeta)-\mathfrak F(f)d\sigma-\omega\int_{\mathbb S^2}\sin\theta (\tilde\zeta-f) d\sigma+\frac{1}{6}\|\mathbb P_1(f-\tilde\zeta)\|^2_2\\
 =&- \frac{1}{2}\int_{\mathbb S^2}(\tilde\zeta-f)\mathcal G(\tilde\zeta-f) d\sigma+ \int_{\mathbb S^2}\mathcal G\tilde\zeta (\tilde\zeta-f) d\sigma+\int_{\mathbb S^2}\mathfrak F(\tilde\zeta)-\mathfrak F(f)d\sigma\\
 &-\omega\int_{\mathbb S^2}\sin\theta (\tilde\zeta-f) d\sigma+\frac{1}{6}\|\mathbb P_1(f-\tilde\zeta)\|^2_2\quad\mbox{(using symmetry of $\mathcal G$)}\\
> &- \frac{1}{2}\int_{\mathbb S^2}(\tilde\zeta-f)\mathcal G(\tilde\zeta-f) d\sigma+ \int_{\mathbb S^2}\mathcal G\tilde\zeta (\tilde\zeta-f) d\sigma+\int_{\mathbb S^2}\mathfrak f(\tilde\zeta)(\tilde\zeta-f)+\frac{1}{12}(f-\tilde\zeta)^2d\sigma \\
 &
 -\omega\int_{\mathbb S^2}\sin\theta (\tilde\zeta-f) d\sigma+\frac{1}{6}\|\mathbb P_1(f-\tilde\zeta)\|^2_2 \quad\mbox{(using \eqref{lf29})}\\
 =&- \frac{1}{2}\int_{\mathbb S^2}(\tilde\zeta-f)\mathcal G(\tilde\zeta-f) d\sigma+ \omega\int_{\mathbb S^2}\sin\theta(\tilde\zeta-f) d\sigma+ \frac{1}{12}\int_{\mathbb S^2}(f-\tilde\zeta)^2d\sigma\\ &-\omega\int_{\mathbb S^2}\sin\theta (\tilde\zeta-f) d\sigma+\frac{1}{6}\|\mathbb P_1(f-\tilde\zeta)\|^2_2\quad\mbox{(using \eqref{lf20})}\\
 =&\frac{1}{12}\int_{\mathbb S^2}g^2d\sigma- \frac{1}{2}\int_{\mathbb S^2}g\mathcal Gg d\sigma+\frac{1}{6}\|\mathbb P_1 g\|^2_2\quad(g:=\tilde\zeta-f)\\
 =&\frac{1}{4}\int_{\mathbb S^2}|\mathbb P_1g|^2d\sigma+\frac{1}{12}\int_{\mathbb S^2}|\mathbb P^\perp_1g|^2d\sigma- \frac{1}{2}\int_{\mathbb S^2}\mathbb P_1g\mathcal G\mathbb P_1g d\sigma- \frac{1}{2}\int_{\mathbb S^2}\mathbb P^\perp_1g\mathcal G\mathbb P^\perp_1g d\sigma  \\
 =& \frac{1}{12}\int_{\mathbb S^2}|\mathbb P^\perp_1g|^2d\sigma-\frac{1}{2} \int_{\mathbb S^2}\mathbb P^\perp_1g\mathcal G\mathbb P^\perp_1g d\sigma \\
 \geq& 0\quad (\mbox{using \eqref{p1}}).
\end{align*}
Hence the claim \eqref{lf080} has been proved.
Note that, as a consequence of  \eqref{lf080} and the weak continuity of $\mathcal E$ in $L^{p}(\mathbb S^{2})$,
 \begin{equation}\label{lf081}
 \mbox{$\tilde \zeta$ is  a maximizer of $\mathcal E$ relative to $\overline{\mathcal R^{w}_{\tilde \zeta}}$},
 \end{equation}
where   $\overline{\mathcal R^{w}_{\tilde \zeta}}$ is the weak closure of $\mathcal R_{\tilde\zeta}$ in $L^p(\mathbb S^2)$

To apply Theorem \ref{stac} to prove stability, it remains to verify compactness. More precisely, we need to show that if $\{f_n\}\subset\mathcal R_{\tilde\zeta}$  satisfies
\begin{equation}\label{lfn}
\lim_{n\to+\infty}\mathcal E(f_n)=\mathcal E(\tilde\zeta),
\end{equation}
then $f_n$ converges strongly to $\tilde\zeta$ in $L^p(\mathbb S^2)$ as $n\to+\infty.$
Since $\{f_n\}$ is obviously bounded in $L^p(\mathbb S^2),$  there is a subsequence $\{f_{n_j}\}$ such that  $f_{n_j}$ converges weakly to $\tilde f$ in $L^p(\mathbb S^2)$ as $j\to+\infty.$ It is clear that
\begin{equation}\label{eeqq0}
\tilde f\in\overline{\mathcal R^w_{\tilde\zeta}}.
\end{equation}
As in the proof of Theorem \ref{sd1rhw}(i) or Theorem \ref{ast1},
to obtain the desired compactness, it suffices to show that
\begin{equation}\label{eeqq}
\tilde f=\tilde\zeta.
\end{equation}

By \eqref{lf080}, \eqref{lfn} and the fact that $\mathcal E$ is weakly continuous in $\mathring L^p(\mathbb S^2)$, we have that
\[\mathcal E(\tilde f)=\mathcal E(\tilde\zeta).\]
In combination with \eqref{lf081} and \eqref{eeqq0}, we deduce that \begin{equation}\label{iioo}
\mbox{$\tilde f$ is  a maximizer of $\mathcal E$ relative to $\overline{\mathcal R^{w}_{\tilde \zeta}}$.}
\end{equation}
On the other hand, notice that for $f\in\mathring L^2(\mathbb S^2),$ $\mathcal E(f)$ can be written as
\begin{equation}\label{eisc}
 \mathcal E(f)=\frac{1}{6}\int_{\mathbb S^2}f\mathcal Gf d\sigma+\frac{1}{3}\int_{\mathbb S^2} \mathbb P^\perp_1f\mathcal G \mathbb P^\perp_1f d\sigma-\omega \int_{\mathbb S^2}\sin\theta f d\sigma+\frac{1}{3}\int_{\mathbb S^2}  f \mathbb P_1\tilde\zeta  d\sigma.
\end{equation}
Using the positive-definiteness of the operator $\mathcal G$ (cf. Lemma \ref{pg0}(iv)), we deduce from \eqref{eisc} that $\mathcal E$ is a strictly convex functional in $\mathring L^2(\mathbb S^2).$ In conjunction with the fact that the set  $\overline{\mathcal R^{w}_{\tilde \zeta}}$ is  convex (cf. Lemma \ref{recvx}(i)), we deduce from  \eqref{iioo} that  $\tilde f$ must be an extreme point of $\overline{\mathcal R^{w}_{\tilde \zeta}}$. Therefore  $\tilde f\in\mathcal R_{\tilde \zeta}$ by Lemma \ref{recvx}(ii). Taking into account \eqref{lf080}, we obtain \eqref{eeqq}.

 \end{proof}

Based on the variational characterization \eqref{lf080},  we can prove the following rigidity result, which has been obtained by Constantin and Germain using a more straightforward approach (cf. \cite{CG}, Theorem  4(ii)(iv)).

\begin{corollary}
  Let $\tilde\zeta$ be as in Theorem \ref{asst}. Then, up to a transformation in $\mathbf S\mathbf O(3)$, $\tilde\zeta$ must be zonal (i.e., $\tilde\zeta$ depends only on   $\theta$).
\end{corollary}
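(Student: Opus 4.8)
The plan is to exploit the variational characterization \eqref{lf080}, which asserts that $\tilde\zeta$ is the \emph{unique} maximizer of $\mathcal E$ relative to its rearrangement class $\mathcal R_{\tilde\zeta}$, together with the symmetry properties of $\mathcal E$. The guiding principle is the one already used in Corollary \ref{cor555}: if both the functional and the constraint set $\mathcal R_{\tilde\zeta}$ are invariant under a group of rotations, then uniqueness forces $\tilde\zeta$ to inherit that invariance. The quadratic part $\frac12\int_{\mathbb S^2}f\mathcal Gf\,d\sigma$ and the term $\frac16\|\mathbb P_1 f\|_{L^2(\mathbb S^2)}^2$ are invariant under all of $\mathbf S\mathbf O(3)$, since $\mathcal G$ commutes with isometries and $\mathbb P_1$ commutes with rotations; and $\mathcal R_{\tilde\zeta}$ is invariant under any measure-preserving transformation, in particular under every rotation. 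Hence the only possible obstructions to rotational invariance of $\mathcal E$ are the linear terms $-\omega\int_{\mathbb S^2}\sin\theta\,f\,d\sigma$ and $\frac13\int_{\mathbb S^2}f\,\mathbb P_1\tilde\zeta\,d\sigma$, each of which is invariant under a polar-axis rotation precisely when its accompanying function ($\sin\theta$, respectively $\mathbb P_1\tilde\zeta$) is zonal.

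I would split the argument according to the value of $\omega$. When $\omega\neq 0$, the symmetry-breaking term $\omega\int\sin\theta f$ is invariant only under rotations about the polar axis, so one cannot rotate freely; instead I would invoke the fact recorded in case (ii) of the proof of Theorem \ref{asst} (from \cite{CG}, Theorem 4(iii)) that $\tilde c_1^{1}=\tilde c_1^{-1}=0$. This makes $\mathbb P_1\tilde\zeta=\tilde c_1^0 Y_1^0$ a multiple of $\sin\theta$, hence zonal, so that $\mathcal E$ becomes invariant under the polar-axis rotation group $\mathbf H^+$. Since $\mathcal R_{\tilde\zeta}$ is also $\mathbf H^+$-invariant, every rotate $\tilde\zeta\circ\mathsf g$ with $\mathsf g\in\mathbf H^+$ is again a maximizer, and uniqueness yields $\tilde\zeta\circ\mathsf g=\tilde\zeta$ for all such $\mathsf g$; that is, $\tilde\zeta$ is zonal, with no rotation required.

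When $\omega=0$, the elliptic equation \eqref{lli2} reduces to $\tilde\zeta=\mathfrak g(-\mathcal G\tilde\zeta)$, which is equivariant under all of $\mathbf S\mathbf O(3)$: for any $\mathsf g\in\mathbf S\mathbf O(3)$ the rotate $\hat\zeta:=\tilde\zeta\circ\mathsf g$ solves the same equation, because $\mathcal G$ commutes with $\mathsf g$. The strategy is to first choose a rotation $\mathsf g_0\in\mathbf S\mathbf O(3)$ carrying the degree-one part into zonal position: since $\mathbb P_1\tilde\zeta\in\mathbb E_1$ and every element of $\mathbb E_1$ is a rotate of a multiple of $\sin\theta$, one can select $\mathsf g_0$ so that $(\mathbb P_1\tilde\zeta)\circ\mathsf g_0=\mathbb P_1\hat\zeta$ is zonal (if $\mathbb P_1\tilde\zeta=0$, any $\mathsf g_0$ works). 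Applying the variational characterization \eqref{lf080} to $\hat\zeta$, which is legitimate because $\hat\zeta$ solves the same equation with the same $\mathfrak g$, shows that $\hat\zeta$ is the unique maximizer of the corresponding functional $\hat{\mathcal E}$, obtained from $\mathcal E$ by replacing $\mathbb P_1\tilde\zeta$ with $\mathbb P_1\hat\zeta$. As $\mathbb P_1\hat\zeta$ is now zonal and $\omega=0$, the functional $\hat{\mathcal E}$ is $\mathbf H^+$-invariant, so the uniqueness argument of the previous paragraph gives that $\hat\zeta$ is zonal; hence $\tilde\zeta=\hat\zeta\circ\mathsf g_0^{-1}$ is zonal up to the transformation $\mathsf g_0\in\mathbf S\mathbf O(3)$, as claimed.

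The main obstacle is the self-referential and generally non-rotation-invariant term $\frac13\int f\,\mathbb P_1\tilde\zeta\,d\sigma$, which prevents a direct appeal to the symmetrization trick of Corollary \ref{cor555}. The device that removes it is to normalize the degree-one component of $\tilde\zeta$ into a polar-axis (zonal) direction before symmetrizing: when $\omega=0$ this is achieved by the preliminary rotation $\mathsf g_0$, whereas when $\omega\neq0$ it comes for free from the vanishing of the equatorial first-mode coefficients $\tilde c_1^{\pm1}$. The remaining routine checks are the equivariance identities $\mathcal G(f\circ\mathsf g)=(\mathcal Gf)\circ\mathsf g$ and $\mathbb P_1(f\circ\mathsf g)=(\mathbb P_1 f)\circ\mathsf g$ for $\mathsf g\in\mathbf S\mathbf O(3)$, which underpin the $\mathbf S\mathbf O(3)$-invariance of the quadratic terms and follow from the facts that rotations are isometries commuting with the Laplace-Beltrami operator and preserving the area measure.
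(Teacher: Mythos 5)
Your proof is correct and follows essentially the same route as the paper: rotate so that $\mathbb P_1\tilde\zeta$ becomes a multiple of $Y_1^0$, observe that the functional $\mathcal E$ is then invariant under rotations about the polar axis, and invoke the uniqueness of the maximizer in \eqref{lf080} exactly as in Corollary \ref{cor555}. Your case distinction on $\omega$ (using $\tilde c_1^{\pm1}=0$ when $\omega\neq0$, and the $\mathbf S\mathbf O(3)$-equivariance of the equation when $\omega=0$) supplies the justification that the paper's terser ``up to the action of $\mathbf S\mathbf O(3)$'' normalization leaves implicit.
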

  \begin{proof}
  Up to the action of $\mathbf S\mathbf O(3)$, we can assume that $\mathbb P_1\tilde\zeta=aY_1^0$ for  some $a\in\mathbb R$. Then
  $\mathcal E$ can be written as
\begin{align*}
 \mathcal E(f)=&\frac{1}{2}\int_{\mathbb S^2}f\mathcal Gf d\sigma -\omega \int_{\mathbb S^2}\sin\theta f d\sigma-\frac{1}{6}\| \mathbb P_1 f  \|^2_{L^2(\mathbb S^2)}+\frac{1}{3}a\int_{\mathbb S^2}fY_1^0 d\sigma.
\end{align*}
It is easy to check that $\mathcal E$ is invariant under rotations about the polar axis. Repeating the argument in the proof of Corollary \ref{cor555}, the desired result follows from the variational characterization \eqref{lf080}.
   \end{proof}

\subsection{Stability of zonal flows with monotone absolute vorticity}

In this subsection, we show that any  zonal flow with monotone absolute vorticity is nonlinearly stable in $L^{p}$ norm of the absolute vorticity, which extends two previous results by Caprino-Marchioro  (cf. Theorem 1.1, \cite{Cap}) and Taylor (cf. Theorem 4.1.1, \cite{T}).

\begin{theorem}\label{thmzonal}
 Let $1<p<+\infty$. Suppose $\tilde\zeta\in \mathring L^{p}(\mathbb S^{2})$ depends only on $x_{3}$ and is monotone in $x_{3}$.    Then $\tilde\zeta$ is stable in $L^p$ norm of the absolute vorticity as in Theorem \ref{stac}.
 
 \end{theorem}

\begin{proof}  
Without loss of generality,  we assume that $\tilde\zeta$   is increasing, i.e.,  $\tilde\zeta(a)\leq \tilde\zeta(b)$ as long as $-1\leq a\leq b\leq 1$.
 Define 
 \[\mathcal E(f)=\int_{\mathbb S^{2}}x_{3}fd\sigma.\]
 Then $\mathcal E$ satisfies the conditions (i)(ii) in Theorem \ref{stac}. By Lemma \ref{lemlg}, 
 \begin{equation}\label{ccnns}
 \mbox{$\tilde \zeta$ is the unique maximizer of $\mathcal E$ relative to $\overline{\mathcal R^{w}_{\tilde\zeta}}$. }
 \end{equation}
  Therefore to apply Theorem \ref{stac}, it suffices to verify compactness.
 Consider a sequence  $\{f_n\}\subset\mathcal R_{\tilde\zeta}$ satisfying
     \[ \lim_{n\to+\infty}\mathcal E(f_n)=M,\quad M:=\max_{\mathcal R_{\tilde\zeta}}\mathcal E=\max_{\overline{\mathcal R^{w}_{\tilde\zeta}}}\mathcal E.\]
     Since $\{f_n\}$ is obviously bounded in $L^{p}(\mathbb S^{2}),$ we can assume, up to a subsequence, that $f_{n}$ converges weakly to some $\hat \zeta\in\overline{\mathcal R^{w}_{\tilde\zeta}}$ in $L^{p}(\mathbb S^{2}).$  Since $\mathcal E$ is  weakly sequentially continuous in $L^{p}(\mathbb S^{2}),$ we have that 
$\mathcal E(\hat \zeta)=M.$
     Hence $\hat\zeta$ is also a maximizer of $\mathcal E$ relative to $\overline{\mathcal R^{w}_{\tilde\zeta}}$, which together with \eqref{ccnns} yields $\hat\zeta=\tilde\zeta.$ In particular, $\hat\zeta\in\mathcal R_{\tilde\zeta}$.  Then strong convergence in $L^p(\mathbb S^2)$ follows from Lemma \ref{ucvx}.

 \end{proof}

\section{Stability of degree-2 RH waves: $\alpha\neq 0$}\label{sec6}
 In this section, we give the proof of Theorem \ref{thm1} by applying Theorem \ref{stac}.

   Throughout this section, let $\alpha  \in\mathbb R$, $Y\in\mathbb E_2$ with $\alpha\neq 0,$ $Y\neq 0,$ and let $1<p<+\infty$ be fixed. By \eqref{e2span}, we can assume that  $Y$ has the following form:
    \begin{equation}\label{yyd1}
     Y=a(3\sin^2\theta-1)+ b\sin (2\theta)\cos\varphi+  c\sin(2\theta)\sin\varphi
   +  d\cos^2\theta\cos(2\varphi)
   + e\cos^2\theta\sin(2\varphi),
    \end{equation}
where $a,b,c,d,e\in\mathbb R.$

  \subsection{Variational problem}\label{sec61}
As we have seen in Section \ref{sec5}, the main challenge in applying Theorem \ref{stac}  is to establish suitable variational characterization for the solutions under consideration.
In this subsection, we solve a maximization problem and verify compactness of maximizing sequences.  In the subsequent section, we will show that $\mathcal H^{+}_{\alpha\sin\theta+Y}$ is an isolated set of maximizers of the maximization problem.

Consider  
 \begin{equation}\label{vpn0}
   M=\sup_{f\in \mathcal R_{\alpha\sin\theta+Y}}\mathcal E(f),
 \end{equation}
 where
 \begin{equation}\label{yc1}
  \mathcal E(f)=\frac{1}{2}\int_{\mathbb S^2}f\mathcal Gf d\sigma-\frac{1}{4}
  \sum_{m=1,0,-1}\left|\int_{\mathbb S^2} f\overline{Y_1^m} d\sigma\right|^2
 + \frac{1}{6}\alpha\int_{\mathbb S^2} \sin\theta fd\sigma.
  \end{equation}
  By   Lemma \ref{pg0}, it is easy to check that the functional $\mathcal E$ is well-defined, weakly continuous and locally uniformly continuous (cf. the condition (i) in  Theorem \ref{stac}) in $\mathring L^{p}(\mathbb S^{2})$.
    Moreover,  if $f\in\mathring L^{2}(\mathbb S^{2})$ has the expansion
\begin{equation}\label{t41}
f=\sum_{j\geq 1}\sum_{|m|\leq j}c_j^mY_j^m,\quad c_{j}^{m}=\int_{\mathbb S^{2}}f\overline{Y_{j}^{m}}d\sigma,
\end{equation}
    then  $\mathcal E(f)$ can be written as
     \begin{equation}\label{t42}
     \mathcal E(f)=\frac{1}{2}\sum_{j\geq 2}\sum_{|m|\leq j}\frac{|c_j^m|^2}{j(j+1)}+\frac{1}{6}\beta c_1^0, \quad \beta:=\sqrt{\frac{4\pi}{3}}\alpha.
     \end{equation}
Note that $\beta$ is exactly the real number such that $\alpha\sin\theta=\beta Y_1^0.$
Therefore, according to the conservation laws \eqref{cl1} and \eqref{cl3},
it is easy to see that $\mathcal E$ is a flow invariant (cf. the condition (ii) in  Theorem \ref{stac}) of the vorticity equation \eqref{ave}.

\begin{proposition}\label{yc00}
Denote by $\mathcal M$ the set of maximizers of \eqref{vpn0},
  \begin{equation}\label{ycc1}
\mathcal M:=\left\{f\in\mathcal R_{\alpha\sin\theta+Y}\mid \mathcal E(f)=M\right\}
  \end{equation}
  \begin{itemize}
    \item[(i)] It holds that \begin{equation}\label{t45b}
 M=\frac{1}{6}\beta^2+\frac{1}{12}\|Y\|_{L^2(\mathbb S^2)}^2,\quad\mathcal M=\mathcal R_{\alpha\sin\theta+Y}\cap (\alpha\sin\theta+\mathbb E_2),
\end{equation}
 where
         \[\alpha\sin\theta+\mathbb E_2:=\left\{\alpha\sin\theta+X\mid X\in\mathbb E_2\right\}.\]
         In particular, $\mathcal M$ is nonempty (since obviously $\alpha\sin\theta+Y$ is a maximizer).
    \item [(ii)] For any sequence $\{f_{n}\}\subset \mathcal R_{\alpha\sin\theta+Y}$ satisfying
$\lim_{n\to+\infty}\mathcal E(f_{n})=M,$
 there is a subsequence  of  $\{f_n\}$, denoted by $\{f_{n_{j}}\}$, such that   $f_{n_{j}}$ converges to some $\tilde f \in\mathcal M$ strongly in $L^p(\mathbb S^2)$ as $j\to+\infty.$ In particular, $\mathcal M$ is compact in $L^p(\mathbb S^2)$.
  \end{itemize}
  \end{proposition}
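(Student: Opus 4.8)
The plan is to exploit the explicit spectral formula \eqref{t42} for $\mathcal{E}$ together with the $L^2$-norm constraint built into the rearrangement class, following the same weak-compactness scheme used in the proofs of Theorem \ref{sd1rhw}(i) and Theorem \ref{ast1}. For part (i), I would first note that every $f\in\mathcal{R}_{\alpha\sin\theta+Y}$ lies in $\mathring L^2(\mathbb S^2)$ and satisfies $\|f\|_{L^2(\mathbb S^2)}^2=\|\alpha\sin\theta+Y\|_{L^2(\mathbb S^2)}^2=\beta^2+\|Y\|_{L^2(\mathbb S^2)}^2$, since rearrangements preserve the $L^2$ norm and $\alpha\sin\theta=\beta Y_1^0\perp\mathbb E_2$. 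Writing $f=\sum_{j\geq1}\sum_{|m|\leq j}c_j^mY_j^m$, the key arithmetic fact is the spectral gap $\tfrac{1}{j(j+1)}\leq\tfrac16$ for $j\geq2$, with equality exactly when $j=2$. Inserting this into \eqref{t42} and using $\sum_{j\geq2}\sum_{|m|\leq j}|c_j^m|^2=\|f\|_{L^2(\mathbb S^2)}^2-|c_1^0|^2-|c_1^1|^2-|c_1^{-1}|^2$ gives, after completing the square in the real coefficient $c_1^0$ against the linear term $\tfrac16\beta c_1^0$,
\begin{equation*}
\mathcal E(f)\leq\frac{1}{12}\|f\|_{L^2(\mathbb S^2)}^2+\frac{1}{12}\beta^2-\frac{1}{12}(c_1^0-\beta)^2-\frac{1}{12}|c_1^1|^2-\frac{1}{12}|c_1^{-1}|^2.
\end{equation*}
Since $\|f\|_{L^2(\mathbb S^2)}^2$ is fixed at $\beta^2+\|Y\|_{L^2(\mathbb S^2)}^2$, the right-hand side is at most $\tfrac16\beta^2+\tfrac{1}{12}\|Y\|_{L^2(\mathbb S^2)}^2$, which yields the stated value of $M$. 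Tracing the equality conditions — equality in the spectral gap forces $c_j^m=0$ for $j\geq3$, while the completed square and the two remaining squares force $c_1^0=\beta$ and $c_1^{\pm1}=0$ — shows that a maximizer is precisely an element of $\mathcal{R}_{\alpha\sin\theta+Y}$ of the form $\beta Y_1^0+X=\alpha\sin\theta+X$ with $X\in\mathbb E_2$, giving the description of $\mathcal M$. It is worth remarking that the $-\tfrac14\sum_m|c_1^m|^2$ term in \eqref{yc1} is designed exactly to cancel the degree-one part of $\tfrac12\int f\mathcal Gf\,d\sigma$, so that the sum in \eqref{t42} begins at $j=2$.

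For part (ii), I would run the standard weak-to-strong argument. Given a maximizing sequence $\{f_n\}\subset\mathcal{R}_{\alpha\sin\theta+Y}$, it is bounded in both $L^p(\mathbb S^2)$ and $L^2(\mathbb S^2)$ (all its elements share the same norms), so after passing to a subsequence $\{f_{n_j}\}$ we may assume $f_{n_j}\rightharpoonup\tilde f$ weakly in $L^p(\mathbb S^2)$ and in $L^2(\mathbb S^2)$. By weak lower semicontinuity of the $L^2$ norm, $\|\tilde f\|_{L^2(\mathbb S^2)}\leq\|\alpha\sin\theta+Y\|_{L^2(\mathbb S^2)}$, and by the weak continuity of $\mathcal E$ (stemming from the compactness of $\mathcal G$ in Lemma \ref{pg0}(ii) and the weak continuity of the linear functionals $f\mapsto\int_{\mathbb S^2}f\overline{Y_1^m}\,d\sigma$) we have $\mathcal E(\tilde f)=M$. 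Repeating the equality analysis of part (i) for $\tilde f$ then forces $\|\tilde f\|_{L^2(\mathbb S^2)}=\|\alpha\sin\theta+Y\|_{L^2(\mathbb S^2)}$; Lemma \ref{ucvx} upgrades the weak $L^2$ convergence to strong $L^2$ convergence, and Lemma \ref{rl3} gives $\tilde f\in\mathcal{R}_{\alpha\sin\theta+Y}$, hence $\tilde f\in\mathcal M$. Finally, since $\|f_{n_j}\|_{L^p(\mathbb S^2)}=\|\tilde f\|_{L^p(\mathbb S^2)}$ and $f_{n_j}\rightharpoonup\tilde f$ in $L^p(\mathbb S^2)$, a second application of Lemma \ref{ucvx} yields strong $L^p$ convergence.

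The main obstacle will be the identification $\|\tilde f\|_{L^2(\mathbb S^2)}=\|\alpha\sin\theta+Y\|_{L^2(\mathbb S^2)}$ in part (ii): this does not follow from weak convergence alone, but must be extracted by simultaneously saturating the $L^2$ bound and the value $\mathcal E(\tilde f)=M$ through the chain of inequalities established in part (i). Everything else is bookkeeping with the Fourier expansion and the cited lemmas on rearrangements and uniform convexity.
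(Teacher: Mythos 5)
Your proposal is correct and follows essentially the same route as the paper's proof: the spectral formula \eqref{t42}, the gap $\tfrac{1}{j(j+1)}\leq\tfrac16$ for $j\geq 2$ with equality analysis forcing $c_j^m=0$ for $j\geq 3$, $c_1^{\pm1}=0$, $c_1^0=\beta$, and then the weak-limit argument where saturating both the $L^2$ bound and $\mathcal E(\tilde f)=M$ recovers $\|\tilde f\|_{L^2(\mathbb S^2)}=\|\alpha\sin\theta+Y\|_{L^2(\mathbb S^2)}$, followed by Lemmas \ref{ucvx} and \ref{rl3} to upgrade to strong $L^p$ convergence. Your completed-square presentation of the chain of inequalities and your correct identification of the role of the $-\tfrac14\sum_m|c_1^m|^2$ term are only cosmetic repackagings of the paper's estimates \eqref{t43} and \eqref{t499}.
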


\begin{proof}
First we prove (i).
For any $f\in\mathcal R_{\alpha\sin\theta+Y}\subset  \mathring L^2(\mathbb S^2)$,  expand $f$ as in \eqref{t41}.  According to \eqref{t42}, we have that
   \begin{equation}\label{t43}
       \begin{split}
       \mathcal E(f)&=\frac{1}{2}\sum_{j\geq 2}\sum_{|m|\leq j}\frac{|c_j^m|^2}{j(j+1)}+\frac{1}{6}\beta c_1^0\\
       &= \frac{1}{2}\sum_{j\geq 2}\sum_{|m|\leq j}\frac{|c_j^m|^2}{6}-\frac{1}{2}\sum_{j\geq 3}\sum_{|m|\leq j}\left(\frac{1}{6}-\frac{1}{j(j+1)}\right) {|c_j^m|^2} +\frac{1}{6}\beta c_1^0\\
      & \leq \frac{1}{2}\sum_{j\geq 2}\sum_{|m|\leq j}\frac{|c_j^m|^2}{6}+\frac{1}{6}\beta c_1^0\\
       &
     =\frac{1}{12}\left(  \|f\|_{L^2(\mathbb S^2)}^2-|c_1^0|^2-|c_1^1|^2-|c_1^{-1}|^2\right) +\frac{1}{6}\beta c_1^0\\
       &\leq \frac{1}{12}\left(  \|f\|_{L^2(\mathbb S^2)}^2-|c_1^0|^2 \right)+\frac{1}{6}\beta c_1^0 \\
       &=-\frac{1}{12}|c_1^0|^2+\frac{1}{6}\beta c_1^0+\frac{1}{12}\left(\beta^2 +\| Y\|_{L^2(\mathbb S^2)}^2\right)\\
       &\leq \frac{1}{6}\beta^2+\frac{1}{12}\|Y\|_{L^2(\mathbb S^2)}^2.
       \end{split}
   \end{equation}
Moreover, it is easy to check that the first inequality in \eqref{t43} is an equality if and only if
\[c_j^m=0,\quad\forall\,j\geq 3,\,\, |m|\leq j,\]
the second inequality in \eqref{t43} is an equality if and only if
$c_1^1=c_1^{-1}=0,$
and the last inequality in \eqref{t43} is an equality if and only if
$c_1^0=\beta.$
In other words, we have proved that
 for any $f\in\mathcal R_{\alpha\sin\theta+Y}$,
  \[\mathcal E(f)\leq \frac{1}{6}\beta^2+\frac{1}{12}\|Y\|_2^2,\]
  and the equality holds if and only if $f\in\beta Y_1^0+\mathbb E_2.$
  Hence \eqref{t45b} has been verified.

  Next we prove (ii).  Fix $\{f_{n}\}\subset \mathcal R_{\alpha\sin\theta+Y}$ such that 
$\lim_{n\to+\infty}\mathcal E(f_{n})=M$. It is clear that $\{f_{n}\}$ is  bounded in $\mathring L^{p}(\mathbb S^{2}),$ thus there is a subsequence, denoted by $\{f_{n_{j}}\}$,  such that   $f_{n_{j}}$ converges to some $\tilde f $ weakly in $\mathring  L^p(\mathbb S^2)$ as $j\to+\infty.$
    Below we show that $\tilde f\in \mathcal M$, and that the convergence actually holds in the strong sense.
Since $\{f_{n_{j}}\}$ is bounded in $\mathring  L^{2}(\mathbb S^{2}),$ we  deduce that  $f_{n_{j}}$ converges to   $\tilde f $ weakly in $\mathring  L^2(\mathbb S^2)$ as $j\to+\infty.$   By the weak lower semicontinuity of the $L^2$ norm, we have that
\begin{equation}\label{t89}
\|\tilde f\|^{2}_{L^{2}(\mathbb S^{2})}\leq \liminf _{j\to+\infty}\|f_{n_{j}}\|^{2}_{L^{2}(\mathbb S^{2})}=\|\alpha\sin\theta+Y\|^{2}_{L^{2}(\mathbb S^{2})}=\beta^{2}+\|Y\|^{2}_{L^{2}(\mathbb S^{2})}.
\end{equation}
Expand   $\tilde f$ as
\begin{equation*}
\tilde f=\sum_{j\geq 1}\sum_{|m|\leq j}\tilde c_j^mY_j^m,\quad \tilde c_{j}^{m}=\int_{\mathbb S^{2}}\tilde f\overline{Y_{j}^{m}}d\sigma.
\end{equation*}
Similar to \eqref{t43}, we have that
        \begin{equation}\label{t499}
       \begin{split}
       \mathcal E(\tilde f)&=\frac{1}{2}\sum_{j\geq 2}\sum_{|m|\leq j}\frac{|\tilde c_j^m|^2}{j(j+1)}+\frac{1}{6}\beta \tilde c_1^0\\
       &= \frac{1}{2}\sum_{j\geq 2}\sum_{|m|\leq j}\frac{|\tilde c_j^m|^2}{6}-\frac{1}{2}\sum_{j\geq 3}\sum_{|m|\leq j}\left(\frac{1}{6}-\frac{1}{j(j+1)}\right) {|\tilde c_j^m|^2} +\frac{1}{6}\beta \tilde c_1^0\\
      & \leq \frac{1}{2}\sum_{j\geq 2}\sum_{|m|\leq j}\frac{|\tilde c_j^m|^2}{6}+\frac{1}{6}\beta \tilde c_1^0\\
       &
     =\frac{1}{12}\left(  \|\tilde f\|_{L^2(\mathbb S^2)}^2-|\tilde c_1^0|^2-|\tilde c_1^1|^2-|\tilde c_1^{-1}|^2\right) +\frac{1}{6}\beta \tilde c_1^0\\
       &\leq \frac{1}{12}\left(  \|\tilde f\|_{L^2(\mathbb S^2)}^2-|\tilde c_1^0|^2 \right)+\frac{1}{6}\beta \tilde c_1^0 \\
        &\leq \frac{1}{12}\left(  \|\alpha\sin\theta+Y\|_{L^2(\mathbb S^2)}^2-|\tilde c_1^0|^2 \right)+\frac{1}{6}\beta \tilde c_1^0 \\
       &=-\frac{1}{12}|\tilde c_1^0|^2+\frac{1}{6}\beta \tilde c_1^0+\frac{1}{12}\left(\beta^2 +\| Y\|_{L^2(\mathbb S^2)}^2\right)\\
       &\leq \frac{1}{6}\beta^2+\frac{1}{12}\|Y\|_{L^2(\mathbb S^2)}^2.
       \end{split}
   \end{equation}
   Here we have used \eqref{t89} in the third inequality.
On the other hand, by the weak continuity of $\mathcal E$ in $\mathring   L^{p}(\mathbb S^{2})$, we have  that
\begin{equation}\label{t90}
\mathcal E( \tilde f)=M=\frac{1}{6}\beta^2+\frac{1}{12}\|Y\|_{L^{2}(\mathbb S^{2})}^2.
\end{equation}
Combining \eqref{t90} and \eqref{t499}, we see that all the inequalities in \eqref{t499} are   equalities. In particular,
\begin{equation}\label{k12}
\|\tilde f\|_{L^{2}(\mathbb S^{2})}=\|\alpha\sin\theta+Y\|_{L^{2}(\mathbb S^{2})}.
\end{equation}
From \eqref{k12}, we can apply Lemma \ref{ucvx} to deduce that $f_{n_{j}}$ converges to $\tilde f$ strongly in $L^{2}(\mathbb S^{2})$  as $j\to+\infty$.
By Lemma \ref{rl3}, we further deduce that
 \begin{equation}\label{t94}
 \tilde f\in\mathcal R_{\alpha\sin\theta+Y}.
 \end{equation}
 In particular,
 \begin{equation}\label{k13}
\|f_{n_j}\|_{L^{p}(\mathbb S^{2})}=\|\alpha\sin\theta+Y\|_{L^{p}(\mathbb S^{2})}=\|\tilde f\|_{L^{p}(\mathbb S^{2})},\quad\forall\,j.
\end{equation}
Applying Lemma   \ref{ucvx} again, we deduce from \eqref{k13} that $f_{n_{j}}$ converges to $\tilde f$ strongly in $L^{p}(\mathbb S^{2})$  as $j\to+\infty$.
Finally, in view of \eqref{t90} and \eqref{t94},  $\tilde f$ is a maximizer of $\mathcal E$ relative to $\mathcal R_{\alpha\sin\theta+Y}.$
The proof is finished.
   \end{proof}

\subsection{ $\mathcal H_{\alpha\sin\theta+Y}$ is  isolated in $\mathcal M$}

 Let $\mathcal H_{\alpha\sin\theta+Y}$ be defined according to \eqref{hy} or \eqref{hy2}. Then it is easy to check that $\mathcal H_{\alpha\sin\theta+Y}$ is compact in $L^p(\mathbb S^2)$, and satisfies
 \begin{equation}\label{pdss2}
\mathcal H_{\alpha\sin\theta+Y}\subset\mathcal R_{\alpha\sin\theta+Y}\cap (\alpha\sin\theta+\mathbb E_2).
 \end{equation}
 Consequently $\mathcal H_{\alpha\sin\theta+Y}\subset\mathcal M.$ The purpose of this subsection is to prove the following proposition.
 
    \begin{proposition}\label{yc088}
    $\mathcal H_{\alpha\sin\theta+Y}$ is  isolated in $\mathcal M$.

  \end{proposition}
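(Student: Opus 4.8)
The plan is to use the compactness of $\mathcal M$ (Proposition \ref{yc00}(ii)) to reduce the claim to a purely local, finite-dimensional statement, and then to convert the rearrangement constraint into finitely many conserved moment identities whose common level set near $\alpha\sin\theta+Y$ is analyzed by the implicit function theorem. Since $\mathcal M$ is compact in $L^p(\mathbb S^2)$ and $\mathcal H_{\alpha\sin\theta+Y}$ is a compact subset (it is the continuous image of the compact group $\mathbf H$), it suffices to show that $\mathcal H_{\alpha\sin\theta+Y}$ is relatively \emph{open} in $\mathcal M$: a subset that is simultaneously closed and open in a compact metric space lies at positive distance from its complement, which is exactly \eqref{deisd} (with the convention that the distance to $\emptyset$ is $+\infty$). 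Because both $\mathcal M$ and $\mathcal H_{\alpha\sin\theta+Y}$ are $\mathbf H$-invariant, relative openness only has to be verified at the single point $\alpha\sin\theta+Y$; that is, I must produce an $L^p$-neighborhood $U$ of $\alpha\sin\theta+Y$ with $U\cap\mathcal M\subset\mathcal H_{\alpha\sin\theta+Y}$. By \eqref{t45b} every competitor in $\mathcal M$ has the form $\alpha\sin\theta+X$ with $X\in\mathbb E_2$, and since $\mathbb E_2$ is five-dimensional all its norms are equivalent, so the problem becomes finite-dimensional.

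Set $u:=\alpha\sin\theta+Y$ and, for $X\in\mathbb E_2$, consider the moment map
\[
\Phi(X):=\Bigl(\int_{\mathbb S^2}(\alpha\sin\theta+X)^{j}\,d\sigma\Bigr)_{j=2}^{6}.
\]
Rearrangements preserve each such integral, so $\Phi$ is constant on $\mathcal M$, equal to $\Phi(Y)$; hence $\mathcal M\subset\Phi^{-1}(\Phi(Y))$, and it is enough to prove that $\Phi^{-1}(\Phi(Y))$ coincides near $Y$ with $\mathcal H^+_Y$ (near $Y$ one has $\mathcal H_Y=\mathcal H^+_Y$, the reflected branch of $\mathbf H$ being discrete). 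Writing $\Pi_2$ for the $L^2$-orthogonal projection onto $\mathbb E_2$, the differential of $\Phi$ at $Y$ acts on $W\in\mathbb E_2$ by
\[
d\Phi(Y)[W]=\Bigl(j\int_{\mathbb S^2}u^{\,j-1}W\,d\sigma\Bigr)_{j=2}^{6}
=\Bigl(j\,\langle \Pi_2(u^{\,j-1}),W\rangle_{L^2}\Bigr)_{j=2}^{6},
\]
where I used $W\in\mathbb E_2$. The elementary identity $\int_{\mathbb S^2}u^{\,k}\partial_\varphi Y\,d\sigma=\tfrac1{k+1}\int_{\mathbb S^2}\partial_\varphi(u^{k+1})\,d\sigma=0$, valid for every $k$, shows $\partial_\varphi Y\in\ker d\Phi(Y)$; and since $\partial_\varphi Y$ spans $T_Y\mathcal H^+_Y$, this is precisely the expected kernel.

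The crux — and the step I expect to be the main obstacle — is to show that $\ker d\Phi(Y)$ is \emph{exactly} one-dimensional when $Y$ is non-zonal (and zero-dimensional when $Y$ is zonal, where $\mathcal H^+_Y=\{Y\}$). Equivalently, the projections $\Pi_2(u^{\,j-1})$, $j=2,\dots,6$, must span the four-dimensional orthogonal complement of $\partial_\varphi Y$ in $\mathbb E_2$ (the whole of $\mathbb E_2$ in the zonal case). This is a concrete but delicate linear-algebra computation: one substitutes the explicit form \eqref{yyd1}, expands the powers of $u=\alpha\sin\theta+Y$, and projects back onto the basis of $\mathbb E_2$. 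The hypothesis $\alpha\neq0$ is decisive here, because the cross terms between $\alpha\sin\theta$ and $Y$ are exactly what generate the additional spanning directions; when $\alpha=0$ the residual $\mathbf{SO}(3)$-symmetry forces a strictly larger (two-dimensional) kernel, which is precisely why the case $\alpha=0$ must be handled separately in Theorem \ref{thm2}.

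Once the rank of $d\Phi(Y)$ is established, $\Phi$ is a submersion in a neighborhood of $Y$, so $\Phi^{-1}(\Phi(Y))$ is there a one-dimensional embedded submanifold tangent to $\langle\partial_\varphi Y\rangle$. As $\mathcal H^+_Y$ is itself a one-dimensional submanifold contained in this fiber with the same tangent at $Y$, the two must agree on a neighborhood of $Y$. Transporting this neighborhood back to an $L^p$-ball via the norm-equivalence on $\alpha\sin\theta+\mathbb E_2$ yields the required set $U$ with $U\cap\mathcal M\subset\mathcal H^+_Y\subset\mathcal H_{\alpha\sin\theta+Y}$. This proves relative openness at $\alpha\sin\theta+Y$, hence (by $\mathbf H$-invariance) everywhere, and therefore $\mathcal H_{\alpha\sin\theta+Y}$ is isolated in $\mathcal M$.
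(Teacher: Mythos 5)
Your structural reductions are sound and in fact parallel the paper's own first steps: compactness of $\mathcal M$, $\mathbf H$-invariance, and the reduction via \eqref{t45b} to the finite-dimensional set $\alpha\sin\theta+\mathbb E_2$, with the rearrangement constraint replaced by finitely many moment identities. But the proof stands or falls on the rank claim for $d\Phi(Y)$, which you leave unproved ("a concrete but delicate linear-algebra computation"), and that claim is \emph{false} for a large class of $Y$ covered by Theorem \ref{thm1}. The obstruction is structural: since $\alpha\sin\theta$ is $\mathbf H$-invariant, $\Phi(X\circ\mathsf g)=\Phi(X)$ for every $\mathsf g\in\mathbf H$, so differentiating at $X=Y$ gives $d\Phi(Y)[W\circ\mathsf g-W]=0$ whenever $Y\circ\mathsf g=Y$. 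Consequently, any nontrivial stabilizer of $Y$ inside $\mathbf H$ forces extra kernel beyond the orbit tangent. If $Y$ is zonal ($b=c=d=e=0$ in \eqref{yyd1}, the case of Corollary \ref{thm10}), the stabilizer is all of $\mathbf H$ and one gets $\ker d\Phi(Y)\supseteq\{\sin(2\theta)\cos\varphi,\sin(2\theta)\sin\varphi,\cos^2\theta\cos(2\varphi),\cos^2\theta\sin(2\varphi)\}$, a $4$-dimensional space (directly: $\int(\alpha\sin\theta+Y)^{j-1}W\,d\sigma=0$ for non-zonal $W$ because the first factor is $\varphi$-independent), whereas you assert the kernel is $\{0\}$. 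If $Y$ is non-zonal but reflection-symmetric, e.g. $c=e=0$, $b,d\neq 0$ (precisely the case $\mathcal H^+_Y=\mathcal H^-_Y$ of Lemma \ref{too} and the remark following it), the stabilizer contains $\varphi\mapsto-\varphi$ and the kernel contains the $2$-dimensional span of $\sin(2\theta)\sin\varphi$ and $\cos^2\theta\sin(2\varphi)$, while the orbit is only $1$-dimensional. In all these cases $\Phi$ is not a submersion at $Y$, so the implicit function theorem cannot identify the fiber with the orbit: the fiber may well still equal the orbit as a set, but only as a degenerate critical level (compare $\{x\in\mathbb R\mid x^2=0\}$ for $x\mapsto x^2$), which no transversality argument can detect. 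Since every moment is $\mathbf H$-invariant, no choice of moments repairs this; the degeneracy transverse to symmetric orbits is quadratic, not linear.

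This is exactly why the paper argues globally and algebraically rather than locally. It computes the six moments $I_2,\dots,I_7$ (you use only five, $I_2,\dots,I_6$; the paper needs $I_7$ to close its system \eqref{abcde1}), observes that they depend on $(a,b,c,d,e)$ only through the $\mathbf H$-invariants $(a,u,v,w)=(a,\,b^2+c^2,\,d^2+e^2,\,b^2d-c^2d+2bce)$, and reduces to the polynomial system \eqref{speo1}. Lemma \ref{aps1} then shows this system has at most two solutions, and that two \emph{distinct} solutions must differ in the coordinate $a$ by an amount determined only by $\alpha$ and $Y$. Combined with Lemma \ref{ce2} (which says that $(a,u,v,w)$ determines the $\mathcal H$-orbit) and the $L^2$-orthogonality of the basis of $\mathbb E_2$, this yields a uniform gap $\|f-g\|_{L^2(\mathbb S^2)}\geq\delta_0>0$ between $\mathcal H_{\alpha\sin\theta+Y}$ and $\mathcal M\setminus\mathcal H_{\alpha\sin\theta+Y}$, with no transversality anywhere. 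To salvage your approach you would have to abandon the submersion step and control the fibers of the induced map on the invariant variables $(a,u,v,w)$ near the symmetric strata — which is, in effect, precisely what the paper's computation does.
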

    \begin{proof}
    Without loss of generality, we assume that 
 $ \mathcal H_{\alpha\sin\theta+Y}\neq \mathcal M.$ It suffices to prove that
\begin{equation}\label{xdn33}
  \inf_{f\in \mathcal H_{\alpha\sin\theta+Y},\, g\in\mathcal M\setminus \mathcal H_{\alpha\sin\theta+Y}}\|f-g\|_{L^p(\mathbb S^2)}>0.
 \end{equation}
Since the $L^{2}$ topology and the $L^{p}$ topology are equivalent on $\mathcal R_{\alpha\sin\theta+Y}$, it suffices to show that
\begin{equation}\label{tmx01}
  \inf_{f\in \mathcal H_{\alpha\sin\theta+Y},\, g\in\mathcal M\setminus \mathcal H_{\alpha\sin\theta+Y}}\|f-g\|_{L^2(\mathbb S^2)}>0.
 \end{equation}
To this end,
we only need  to prove that there exists  some $\delta_0>0$, depending only on $p, $ $\alpha$ and $Y$, such that 
 \begin{equation}\label{pdss1}
   \|f-g\|_{L^2(\mathbb S^2)}>\delta_0,\quad \forall\,f\in \mathcal H_{\alpha\sin\theta+Y},\,\,g\in\mathcal M\setminus\mathcal H_{\alpha\sin\theta+Y}.
 \end{equation}

Fix $f\in \mathcal H_{\alpha\sin\theta+Y}$ and $g\in\mathcal M\setminus\mathcal H_{\alpha\sin\theta+Y}.$
 Since $f, g\in\mathcal M$, it follows from Proposition \ref{yc00}(i) that
\[f, g\in  \alpha\sin\theta+\mathbb E_2.\]
Hence we can assume that 
    \begin{equation}\label{ffdb1}
        \begin{split}
     f=&\alpha\sin\theta+\tilde a(3\sin^2\theta-1)+ \tilde b\sin (2\theta)\cos\varphi+ \tilde c\sin(2\theta)\sin\varphi
    +  \tilde d\cos^2\theta\cos(2\varphi)\\
&  +  \tilde e\cos^2\theta\sin(2\varphi),
        \end{split}
 \end{equation}
    \begin{equation}\label{ffd1}
        \begin{split}
      g=&\alpha\sin\theta+    a'(3\sin^2\theta-1)+  b'\sin (2\theta)\cos\varphi+ c'\sin(2\theta)\sin\varphi
    +  d'\cos^2\theta\cos(2\varphi)\\
     &+  e'\cos^2\theta\sin(2\varphi),
        \end{split}
    \end{equation}
    where $\tilde a, \tilde b,\tilde c,\tilde d,\tilde e,a',b',c',d',e'\in\mathbb R$. Recall \eqref{yyd1}.
Since $f\in \mathcal H_{\alpha\sin\theta+Y}$, by Lemma \ref{ce2} in Appendix \ref{appb}, we have that 
$a=\tilde a.$
Notice that the following six functions
 \[ \sin\theta,\,\,3\sin^2\theta-1,\,\, \sin(2\theta)\cos\varphi,\,\,\sin(2\theta)\sin\varphi,\,\,\cos^2\theta\cos(2\varphi),\,\, \cos^2\theta\sin(2\varphi) \]
 are orthogonal in $L^2(\mathbb S^2),$
 we have that
 \[  \|f-g\|_{L^2(\mathbb S^2)}\geq |a-a'|\left(\int_{\mathbb S^2}(3\sin^2\theta-1)^2 d\sigma\right)^{1/2}.\]
Therefore, to prove \eqref{pdss1},  it is sufficient to show that there exists some $\delta>0$, depending only on $\alpha$ and $Y$, such that
\begin{equation}\label{adell}
|a-a'|>\delta.
\end{equation}

 The rest of the proof is devoted to verifying \eqref{adell}. 
 Since $g\in \mathcal R_{\alpha\sin\theta+Y},$ it holds that
\[
 \int_{\mathbb S^2}g^m d\sigma=\int_{\mathbb S^2}(\alpha\sin\theta+Y)^m d\sigma:=I_m
\]
for any positive integer $m$.
To prove \eqref{adell},  we need to figure out the relation between $a,b,c,d,e$ and $a',b',c',d',e'$.
The  idea is to choose different values of $m$ and express $I_m$ in terms of $a,b,c,d,e$ and $a',b',c',d',e'$, respectively.  In principle, the more values of $m$ we choose to calculate, the more information we  obtain. However, it turns out that choosing $m=2, 3, 4, 5, 6,7$ is sufficient for our purpose, as we will see below.

Without loss of generality, we only calculate $I_m$ in terms of $a,b,c,d,e$. Note that
\begin{align*}
I_m&=\int_{-\frac{\pi}{2}}^{\frac{\pi}{2}}\int_{-\pi}^{\pi}[\alpha\sin\theta+a(3\sin^2\theta-1)+ b\sin (2\theta)\cos\varphi+  c\sin(2\theta)\sin\varphi
   +  d\cos^2\theta\cos(2\varphi)\\
   &+ e\cos^2\theta\sin(2\varphi)]^m \cos\theta d\varphi d\theta.
   \end{align*}
 With the help of Maple, we have that
\begin{equation}\label{i2com}
I_2=\frac{4\pi}{15}( 12a^2 + 5\alpha^2 + 4b^2 + 4c^2 + 4d^2 + 4e^2),
\end{equation}
\begin{equation}\label{i3com}
I_3= \frac{16\pi}{35}(4a^3 + 7a\alpha^2 + 2ab^2 + 2ac^2 - 4ad^2 - 4ae^2 + 2b^2d + 4bce - 2c^2d),
\end{equation}
\begin{equation*}
\begin{split}
I_4&= \frac{4\pi}{105}(144 a^{4}+264 a^{2} \alpha^{2}+96 a^{2} b^{2}+96 a^{2} c^{2}+96 a^{2} d^{2}+96 a^{2} e^{2}+21 \alpha^{4}+72 \alpha^{2} b^{2}\\
&+72 \alpha^{2} c^{2}+24 \alpha^{2} d^{2}+24 \alpha^{2} e^{2}+16 b^{4}+32 b^{2} c^{2}+32 b^{2} d^{2}+32 b^{2} e^{2}+16 c^{4}+32 c^{2} d^{2}\\
&+32 c^{2} e^{2}+16 d^{4}+32 d^{2} e^{2}+16 e^{4}),
\end{split}
\end{equation*}
\begin{equation*}
\begin{split}
I_5&= \frac{32\pi }{231 } (48 a^{5}+176 a^{3} \alpha^{2}+40 a^{3} b^{2}+40 a^{3} c^{2}-32 a^{3} d^{2}-32 a^{3} e^{2}+24 a^{2} b^{2} d+48 a^{2} b c e\\
&-24 a^{2} c^{2} d+33 a \alpha^{4}+66 a \alpha^{2} b^{2}+66 a \alpha^{2} c^{2}+8 a \,b^{4}+16 a \,b^{2} c^{2}-8 a \,b^{2} d^{2}-8 a \,b^{2} e^{2}\\
&+8 a \,c^{4}-8 a \,c^{2} d^{2}-8 a \,c^{2} e^{2}-16 a \,d^{4}-32 a \,d^{2} e^{2}-16 a \,e^{4}+22 \alpha^{2} b^{2} d+44 \alpha^{2} b c e\\
&-22 \alpha^{2} c^{2} d+8 b^{4} d+16 b^{3} c e+8 b^{2} d^{3}
+8 b^{2} d \,e^{2}+16 b \,c^{3} e+16 b c \,d^{2} e+16 b c \,e^{3}-8 c^{4} d\\
&-8 c^{2} d^{3}-8 c^{2} d \,e^{2}),
\end{split}
\end{equation*}
 
\begin{equation*}
\begin{split}
I_6&=\frac{4 \pi}{3003}  (10176 a^{6}+45552 a^{4} \alpha^{2}+10176 a^{4} b^{2}+10176 a^{4} c^{2}+5568 a^{4} d^{2}+5568 a^{4} e^{2}\\
&+1536 a^{3} b^{2} d+3072 a^{3} b c e-1536 a^{3} c^{2} d+15444 a^{2} \alpha^{4}+26208 a^{2} \alpha^{2} b^{2}+26208 a^{2} \alpha^{2} c^{2}\\
&+3744 a^{2} \alpha^{2} d^{2}+3744 a^{2} \alpha^{2} e^{2}+3264 a^{2} b^{4}+6528 a^{2} b^{2} c^{2}+4224 a^{2} b^{2} d^{2}+4224 a^{2} b^{2} e^{2}\\
&+3264 a^{2} c^{4}+4224 a^{2} c^{2} d^{2}+4224 a^{2} c^{2} e^{2}+4416 a^{2} d^{4}+8832 a^{2} d^{2} e^{2}+4416 a^{2} e^{4}\\
&+4992 a \alpha^{2} b^{2} d+9984 a \alpha^{2} b c e-4992 a \alpha^{2} c^{2} d+768 a \,b^{4} d+1536 a \,b^{3} c e-1536 a \,b^{2} d^{3}\\
&-1536 a \,b^{2} d \,e^{2}+1536 a b \,c^{3} e-3072 a b c \,d^{2} e -3072 a b c \,e^{3}-768 a \,c^{4} d+1536 a \,c^{2} d^{3}\\
&+1536 a \,c^{2} d \,e^{2}+429 \alpha^{6}+2860 \alpha^{4} b^{2}+2860 \alpha^{4} c^{2}+572 \alpha^{4} d^{2}+572 \alpha^{4} e^{2}+3120 \alpha^{2} b^{4}\\
&+6240 \alpha^{2} b^{2} c^{2}+3744 \alpha^{2} b^{2} d^{2} +3744 \alpha^{2} b^{2} e^{2}+3120 \alpha^{2} c^{4}+3744 \alpha^{2} c^{2} d^{2}+3744 \alpha^{2} c^{2} e^{2}\\
&+624 \alpha^{2} d^{4}+1248 \alpha^{2} d^{2} e^{2} +624 \alpha^{2} e^{4}+320 b^{6}+960 b^{4} c^{2}+1344 b^{4} d^{2} +960 b^{4} e^{2}\\
&+1536 b^{3} c d e+960 b^{2} c^{4} +1152 b^{2} c^{2} d^{2}+3456 b^{2} c^{2} e^{2}+960 b^{2} d^{4}+1920 b^{2} d^{2} e^{2}+960 b^{2} e^{4}\\
&-1536 b \,c^{3} d e +320 c^{6}+1344 c^{4} d^{2}+960 c^{4} e^{2}+960 c^{2} d^{4}+1920 c^{2} d^{2} e^{2}+960 c^{2} e^{4}+320 d^{6}\\
&+960 d^{4} e^{2}+960 d^{2} e^{4}+320 e^{6}).
\end{split}
\end{equation*}
\begin{equation*}
\begin{split}
I_7&=\frac{16\pi }{ 429} (576 a^{7}+3792 a^{5} \alpha^{2}+672 a^{5} b^{2}+672 a^{5} c^{2}-192 a^{5} d^{2}-192 a^{5} e^{2}+288 a^{4} b^{2} d\\
&+576 a^{4} b c e-288 a^{4} c^{2} d+2028 a^{3} \alpha^{4}+2736 a^{3} \alpha^{2} b^{2}+2736 a^{3} \alpha^{2} c^{2}+96 a^{3} \alpha^{2} d^{2}+96 a^{3} \alpha^{2} e^{2}\\
&+256 a^{3} b^{4}+512 a^{3} b^{2} c^{2}-64 a^{3} b^{2} d^{2}-64 a^{3} b^{2} e^{2}+256 a^{3} c^{4}-64 a^{3} c^{2} d^{2}-64 a^{3} c^{2} e^{2}\\
&-320 a^{3} d^{4}-640 a^{3} d^{2} e^{2}-320 a^{3} e^{4}+816 a^{2} \alpha^{2} b^{2} d+1632 a^{2} \alpha^{2} b c e-816 a^{2} \alpha^{2} c^{2} d \\
&+192 a^{2} b^{4} d+384 a^{2} b^{3} c e+192 a^{2} b^{2} d^{3} +192 a^{2} b^{2} d \,e^{2}+384 a^{2} b \,c^{3} e+384 a^{2} b c \,d^{2} e \\
&+384 a^{2} b c \,e^{3}-192 a^{2} c^{4} d-192 a^{2} c^{2} d^{3}-192 a^{2} c^{2} d \,e^{2}+143 a \alpha^{6}+650 a \alpha^{4} b^{2} +650 a \alpha^{4} c^{2}\\
&+52 a \alpha^{4} d^{2} +52 a \alpha^{4} e^{2}+480 a \alpha^{2} b^{4}+960 a \alpha^{2} b^{2} c^{2}+144 a \alpha^{2} b^{2} d^{2} +144 a \alpha^{2} b^{2} e^{2}+480 a \alpha^{2} c^{4}\\
&
+144 a \alpha^{2} c^{2} d^{2}+144 a \alpha^{2} c^{2} e^{2}-48 a \alpha^{2} d^{4}-96 a \alpha^{2} d^{2} e^{2}-48 a \alpha^{2} e^{4}+32 a \,b^{6}+96 a \,b^{4} c^{2}\\
&+96 a \,b^{2} c^{4}-96 a \,b^{2} d^{4}-192 a \,b^{2} d^{2} e^{2}-96 a \,b^{2} e^{4}+32 a \,c^{6}-96 a \,c^{2} d^{4}-192 a \,c^{2} d^{2} e^{2}\\
&-96 a \,c^{2} e^{4}-64 a \,d^{6}-192 a \,d^{4} e^{2}-192 a \,d^{2} e^{4}-64 a \,e^{6}+130 \alpha^{4} b^{2} d+260 \alpha^{4} b c e\\
&-130 \alpha^{4} c^{2} d+240 \alpha^{2} b^{4} d+480 \alpha^{2} b^{3} c e+144 \alpha^{2} b^{2} d^{3}+144 \alpha^{2} b^{2} d \,e^{2}+480 \alpha^{2} b \,c^{3} e\\
&+288 \alpha^{2} b c \,d^{2} e+288 \alpha^{2} b c \,e^{3}-240 \alpha^{2} c^{4} d-144 \alpha^{2} c^{2} d^{3}-144 \alpha^{2} c^{2} d \,e^{2}+32 b^{6} d+64 b^{5} c e\\
&+32 b^{4} c^{2} d+64 b^{4} d^{3}+64 b^{4} d \,e^{2}+128 b^{3} c^{3} e+128 b^{3} c \,d^{2} e+128 b^{3} c \,e^{3}-32 b^{2} c^{4} d+32 b^{2} d^{5}\\
&+64 b^{2} d^{3} e^{2}+32 b^{2} d \,e^{4} +64 b \,c^{5} e+128 b \,c^{3} d^{2} e+128 b \,c^{3} e^{3}+64 b c \,d^{4} e+128 b c \,d^{2} e^{3}\\
&+64 b c \,e^{5}-32 c^{6} d-64 c^{4} d^{3}-64 c^{4} d \,e^{2}-32 c^{2} d^{5}-64 c^{2} d^{3} e^{2}-32 c^{2} d \,e^{4}).
\end{split}
\end{equation*}
 Denote
 \[A:=\frac{15}{4\pi}I_{2},\quad B:=\frac{35}{16\pi}I_{3},\quad C:=\frac{105}{4\pi}I_{4},\quad D:=\frac{231}{32\pi}I_{5},\quad E:=\frac{3003}{4\pi}I_{6},\quad F:=\frac{429}{16\pi}I_{7}.\]
  Again, with the help of Maple, one can check  that    
\begin{equation}\label{abcde1}
\begin{cases}
     12a^2  +  5\alpha^2 +4u  +  4v =A, \\
      4a^3 + 7a\alpha^2 + 2au - 4av + 2w=B, \\
     \alpha^{2} (36 a^{2}-\alpha^{2}+8u-4v)=\frac{1}{4}(C-A^2),\\
      \alpha^{2} (36 a^{3}-a \alpha^{2}+14au-4 a v+6w )=\frac{1}{2}(D-AB),\\
      \alpha^{4}  (36 a^{3}-a \alpha^{2}+10 a u-4 a v+2w)
    =\frac{1}{48 }\left(6A  D-6A^{2} B+3B  C-3F\right),\\ \alpha^2(324\alpha^2a^2+288a^2v-144aw+68\alpha^2u-44\alpha^2v+16u^2-16uv-32v^2\\-9\alpha^{4})
    =\frac{1}{16}(17AC+96B^2-12A^3-E),
\end{cases}
\end{equation}
where 
\[
       u:=b^2+c^2,\quad v:=d^2+e^2,\quad w:=b^2d-c^2d+2bce.
\]
Similarly,  we have that
\begin{equation}\label{abcde2}
\begin{cases}
     12  a'^2  +  5\alpha^2 +4 u'   +  4 v'  =A, \\
      4 a'^3 + 7  a'\alpha^2 + 2 a' u' - 4  a'  v' + 2  w'=B, \\
    \alpha^{2}  (36   a'^{2}-\alpha^{2}+8  u'-4   v' )=\frac{1}{4}(C-A^2),\\
      \alpha^{2} (36 a'^{3}-  a' \alpha^{2}+14   a'  u'-4   a'  v'+6  w')=\frac{1}{2}(D-AB),\\
      \alpha^{4} (36  a'^{3}-a' \alpha^{2}+10   a'  u'-4  a'  v'+2  w')=\frac{1}{48}( 6A  D-6A^{2} B+3B  C-3F ),\\
      \alpha^2(324\alpha^2a'^2+288a'^2v'-144a'w'
      +68\alpha^2u'-44\alpha^2v'+16u'^2-16u'v'-32v'^2\\
   -9\alpha^{4}) =\frac{1}{16}(17AC+96B^2-12A^3-E),
\end{cases}
\end{equation}
where \[
       u':=b'^2+c'^2,\quad v':=d'^2+e'^2,\quad w':=b'^2d'-c'^2d'+2b'c'e'.
\]
Therefore  $(a,u,v,w)$ and $(a',u',v',w')$ are two solutions of the following  polynomial system:
 \begin{equation}\label{speo1}
 \begin{cases}
  3x_1^2 +x_2+x_3=b_{1}, \\
    4x_1^3 +7\alpha^2x_1+2x_1x_2-4x_1x_3+2x_4=b_{2}, \\
       9x_1^2   + 2x_2 - x_3=b_{3}, \\
        36x_1^3 -\alpha^2x_1 + 14x_1x_2 - 4x_1x_3   + 6x_4=b_{4} \\
         36x_1^3-\alpha^2x_1+ 10x_1x_2 - 4x_1x_3  +2x_4=b_{5},   \\
         \alpha^2(324x_1^2+68x_2-44x_3)
          +288x_1^2x_3-144x_1x_4
          +16x_2^2-16x_2x_3-32x_3^2=b_{6},
\end{cases}
\end{equation}
where
\[b_{1}:=\frac{1}{4}(A-5\alpha^2),\quad b_{2}:=B,\quad b_{3}:=\frac{1}{16\alpha^2}(C-A^2)+\frac{1}{4}\alpha^2,\quad b_{4}:=\frac{1}{2\alpha^2}(D-AB), \]
\[b_{5}:=\frac{1}{48\alpha^2}( 6A  D-6A^{2} B+3B  C-3F),\quad b_{6}:=\frac{1}{16\alpha^{2}}(17AC+96B^2-12A^3-E)+9\alpha^{4}.\]
Since $g\notin \mathcal H_{\alpha\sin\theta+Y},$ using Lemma \ref{ce2}, we deduce that 
\[ (a,u,v,w)\neq (a',u',v',w'),\]
that is, $(a,u,v,w)$ and $(a',u',v',w')$ are two \emph{different} solutions of \eqref{speo1}.
 Applying Lemma \ref{aps1} in Appendix \ref{secap2}, we deduce that $|a-a'|$ is a positive number depending only on $\alpha,A,B,C,D,E,F$, and thus depending only on $\alpha$ and $Y$. The proof is finished.
   \end{proof}

\begin{remark}
We conjecture that   \[\mathcal H_{\alpha\sin\theta+Y} =\mathcal R_{\alpha\sin\theta+Y}\cap(\alpha\sin\theta+\mathbb E_{2}),\]
which is  more elegant and stronger than Proposition \ref{yc088}. To show this, one needs to compute $I_m$ for more values of $m$ and make suitable combinations as in \eqref{abcde1}, which may be a very  difficult task. We do not intend to continue the discussion further since Proposition \ref{yc088} is sufficient for our purpose. 
\end{remark}

\subsection{Proof of Theorem \ref{thm1}}
In this subsection, we provide the  proof of Theorem \ref{thm1}. To begin with, we prove the following lemma.

\begin{lemma}\label{too}
For a function $f:\mathbb S^2\to\mathbb R,$   denote
  \begin{equation}\label{hyn20}
 \mathcal H^-_f=\{f\circ\mathsf g\mid \mathsf g\in\mathbf H\setminus\mathbf H^+\}.
 \end{equation}
Then either $\mathcal H_f^+\cap  \mathcal H_f^-=\varnothing$ or $\mathcal H^+_f=\mathcal H_f^-=\mathcal H_f$.
\end{lemma}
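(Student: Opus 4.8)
The plan is to treat this as an elementary orbit-and-coset argument, using that $\mathbf H^+$ is an index-two subgroup of $\mathbf H$ and that $\mathbf H\setminus\mathbf H^+$ is its unique nontrivial coset (the determinant $-1$ elements, i.e.\ the reflections $\varphi\mapsto-\varphi+\beta$ in the description \eqref{hy2}--\eqref{hyp2}). First I would record the trivial decomposition $\mathcal H_f=\mathcal H^+_f\cup\mathcal H^-_f$, which is immediate from $\mathbf H=\mathbf H^+\cup(\mathbf H\setminus\mathbf H^+)$ together with the definitions \eqref{hyp} and \eqref{hyn20}. Thus it suffices to prove the dichotomy at the level of $\mathcal H^+_f$ and $\mathcal H^-_f$: either they are disjoint, or they coincide (and then all three sets agree).

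If $\mathcal H^+_f\cap\mathcal H^-_f=\varnothing$, there is nothing to prove. Otherwise, choose $\mathsf g_1\in\mathbf H^+$ and $\mathsf g_2\in\mathbf H\setminus\mathbf H^+$ with $f\circ\mathsf g_1=f\circ\mathsf g_2$. Composing on the right with $\mathsf g_1^{-1}$ and writing $\mathsf h:=\mathsf g_2\mathsf g_1^{-1}$, I obtain $f=f\circ\mathsf h$, where $\mathsf h\in\mathbf H\setminus\mathbf H^+$ since $\det\mathsf h=\det\mathsf g_2\cdot\det\mathsf g_1^{-1}=-1$. In other words, a single coincidence between the two orbits already forces a genuine reflection symmetry of $f$.

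The heart of the argument is then that composing the symmetry relation $f=f\circ\mathsf h$ with an arbitrary group element interchanges the two cosets. Concretely, for any $\mathsf k\in\mathbf H^+$ the relation $f=f\circ\mathsf h$ evaluated at $\mathsf k\mathbf x$ yields $f\circ\mathsf k=f\circ(\mathsf h\mathsf k)$, and $\mathsf h\mathsf k\in\mathbf H\setminus\mathbf H^+$ because $\det(\mathsf h\mathsf k)=-1$; hence $\mathcal H^+_f\subseteq\mathcal H^-_f$. Symmetrically, for any $\mathsf k\in\mathbf H\setminus\mathbf H^+$ one has $f\circ\mathsf k=f\circ(\mathsf h\mathsf k)$ with $\mathsf h\mathsf k\in\mathbf H^+$, giving $\mathcal H^-_f\subseteq\mathcal H^+_f$. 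Therefore $\mathcal H^+_f=\mathcal H^-_f$, and combined with the decomposition from the first step this yields $\mathcal H^+_f=\mathcal H^-_f=\mathcal H_f$, as claimed.

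There is no serious obstacle here; the only point requiring care is the bookkeeping of the composition convention $(f\circ\mathsf g)(\mathbf x)=f(\mathsf g\mathbf x)$, so that the substitution $\mathbf y=\mathsf g_1\mathbf x$ correctly produces $f=f\circ(\mathsf g_2\mathsf g_1^{-1})$ and so that associativity of composition validates the identity $f\circ\mathsf k=f\circ(\mathsf h\mathsf k)$. Once this convention is fixed, the proof reduces to the direct verification above.
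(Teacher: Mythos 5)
Your proof is correct and follows essentially the same route as the paper's: the decomposition $\mathcal H_f=\mathcal H^+_f\cup\mathcal H^-_f$, extraction of a reflection symmetry $f=f\circ(\mathsf g_2\circ\mathsf g_1^{-1})$ from a single coincidence of the two orbits, and then the two-sided inclusion argument obtained by composing with arbitrary group elements and tracking which coset the product lands in. The only cosmetic difference is that you justify the coset bookkeeping via determinants, where the paper leaves it as "one can easily check."
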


\begin{proof}
By the definition of $ \mathcal H^-_f,$ it is easy to see that
  \begin{equation}\label{hyn2}
 \mathcal H_f=  \mathcal H^+_f\cup  \mathcal H^-_f.
 \end{equation}
Suppose
$\mathcal H_f^+\cap  \mathcal H_f^-\neq\varnothing.$
Then there exist $\mathsf g_1\in\mathbf H^{+}(3)$ and $\mathsf g_2\in \mathbf H\setminus \mathbf H^{+}$ such that
\[f\circ \mathsf g_1=f\circ\mathsf g_2,\]
which yields
\begin{equation}\label{ggg2}
f=f\circ\mathsf g_2\circ \mathsf g_1^{-1}.
\end{equation}
Now we show that
\begin{equation}\label{laa2}
\mathcal H_f^+= \mathcal H_f^-.
\end{equation}
First we  prove $\mathcal H_f^+\subset  \mathcal H_f^-$.
Suppose $f\circ \mathsf g\in\mathcal H^+_f$ with $\mathsf g\in\mathbf H^{+}$. Then by \eqref{ggg2}, we have that
\[
f\circ \mathsf g=f\circ\mathsf g_2\circ \mathsf g_1^{-1}\circ \mathsf g\in\mathcal H_f^-.
\]
Here we used the fact that $\mathsf g_2\circ \mathsf g_1^{-1}\circ \mathsf g\in\mathbf H\setminus \mathbf H^{+},$ as one can easily check.   Conversely, if $f\circ \mathsf g\in\mathcal H^-_f$, where $\mathsf g\in\mathbf H\setminus \mathbf H^{+}$, then by \eqref{ggg2} again,
\[f\circ \mathsf g=f\circ\mathsf g_2\circ \mathsf g_1^{-1}\circ \mathsf g\in \mathcal H_f^+.\]
Here we used the fact that $\mathsf g_2\circ \mathsf g_1^{-1}\circ \mathsf g\in  \mathbf  H^{+}.$    Hence \eqref{laa2} has been proved. Taking into account \eqref{hyn2}, we further deduce that 
 $\mathcal H^+_f=\mathcal H_f^-=\mathcal H_f$.
 \end{proof}

\begin{remark}
If  $Y\in\mathbb E_1,$ then  $\mathcal H^+_Y=\mathcal H_Y^-$ (cf.  \eqref{y1eq} in Section \ref{sec2}).  If $Y\in \mathbb E^2$, then $\mathcal H^+_Y=\mathcal H_Y^-$ if and only if $Y$ has the form
\begin{equation*}
Y(\varphi,\theta)=a(3\sin^2\theta-1)+b\sin(2\theta)\cos(\varphi+\mu)+c\cos^2\theta\cos(2\varphi+2\mu),
\end{equation*}
where $ a,b,c,\mu\in\mathbb R$. The proof is left to the reader.
\end{remark}

Having made enough preparations, we are ready to give the proof of Theorem \ref{thm1}.
\begin{proof}[Proof of Theorem \ref{thm1}]
Let $\mathcal E$ be defined by \eqref{yc1} and $\mathcal M$ be defined by \eqref{ycc1}. Then $\mathcal E$ and $\mathcal M$ satisfy all the assumptions  in Theorem \ref{stac}.

We claim that
\begin{equation}\label{1opys3}
\mbox{$\mathcal H_{\alpha\sin\theta+Y}^{+}$ is   isolated in $\mathcal H_{\alpha\sin\theta+Y}.$}
\end{equation}
 In fact,  by Lemma \ref{too},
 either
\begin{equation}\label{1oyps1}
\mathcal H_{\alpha\sin\theta+Y}^{+}\cap\mathcal H_{\alpha\sin\theta+Y}^{-}=\varnothing,
\end{equation}
or
\begin{equation}\label{1oyps2}
\mathcal H_{\alpha\sin\theta+Y}^{+}=\mathcal H_{\alpha\sin\theta+Y}^{-}=\mathcal H_{\alpha\sin\theta+Y}.
\end{equation}
If \eqref{1oyps1} holds, then  
\[\inf_{f\in\mathcal H_{\alpha\sin\theta+Y}^{+},\,g\in\mathcal H_{\alpha\sin\theta+Y}^{-}}\|f-g\|_{L^{p}(\mathbb S^{2})}>0,\]
since one can easily check that $\mathcal H_{\alpha\sin\theta+Y}^{+}$ and $\mathcal H_{\alpha\sin\theta+Y}^{-}$ are both compact in $L^{p}(\mathbb S^{2}).$ Hence \eqref{1opys3} has beed verified.

From \eqref{1opys3}, taking into account Proposition \ref{yc088},   we deduce  that $\mathcal H_{\alpha\sin\theta+Y}^{+}$ is   isolated in $\mathcal M$. Then the  stability of $\mathcal H^{+}_{\alpha\sin\theta+Y}$  is a straightforward consequence of Theorem \ref{stac}.
  \end{proof}

\section{Stability of degree-2 RH waves: $\alpha=0$}\label{sec7}

Throughout this section, let $Y\in\mathbb E_{2}$, $Y\neq 0$, and let $1<p<+\infty$ be fixed. Suppose $Y$ has the form
  \begin{equation}\label{yyd2}
      Y=a(3\sin^2\theta-1)+b\sin(2\theta)\cos\varphi+ c\sin(2\theta)\sin\varphi+d\cos^2\theta\cos(2\varphi)+e\cos^2\theta\sin(2\varphi),
  \end{equation}
 where $a,b,c,d,e\in\mathbb R.$

\subsection{Variational  problem}\label{sec71}
As in Section \ref{sec6}, in order to apply Theorem \ref{stac}, we solve a maximization problem and verify compactness in this subsection.
Later we will show that $\mathcal O^{+}_{Y}$ is an isolated set of maximizers of the maximization problem.

Consider  \begin{equation}\label{vpoy1}
M=\sup_{\mathcal R_Y}\mathcal E,
\end{equation} 
where
 \begin{equation}\label{zbb01}
 \mathcal E(f)=\frac{1}{2}\int_{\mathbb S^2}f\mathcal Gf d\sigma-\frac{1}{4}
  \sum_{m=1,0,-1}\left|\int_{\mathbb S^2} f\overline{Y_1^m} d\sigma\right|^2.
  \end{equation}
 By   Lemma \ref{pg0}, one can check  that  $\mathcal E$ is well-defined, weakly continuous and locally uniformly continuous (cf. the condition (i) in  Theorem \ref{stac}) in $\mathring L^{p}(\mathbb S^{2}).$  Moreover,  if $f\in\mathring L^{2}(\mathbb S^{2})$ has the expansion
\begin{equation}\label{h01}
f=\sum_{j\geq 1}\sum_{|m|\leq j}c_j^mY_j^m,\quad c_{j}^{m}=\int_{\mathbb S^{2}}f\overline{Y_{j}^{m}}d\sigma,
\end{equation}
    then  $\mathcal E(f)$ can be written as
\begin{equation}\label{lkj}
\mathcal E(f)=\frac{1}{2}\sum_{j\geq 2}\sum_{|m|\leq j}\frac{|c_j^m|^2}{j(j+1)}.
\end{equation}
In particular, by the conservation laws \eqref{cl1} and \eqref{cl3},  $\mathcal E$  is a flow invariant of the vorticity equation \eqref{ave} (cf. the condition (ii) in  Theorem \ref{stac}).

\begin{proposition}\label{yc00i1}
Denote by $\mathcal M$ the set of maximizers of \eqref{vpoy1}, i.e.,
  \begin{equation}\label{ycc1e}
\mathcal M=\left\{f\in\mathcal R_{Y}\mid \mathcal E(f)=M\right\}
  \end{equation}
  \begin{itemize}
    \item[(i)] It holds that \begin{equation}\label{t45}
  M= \frac{1}{12}\|Y\|_{L^2(\mathbb S^2)}^2,\quad\mathcal M=\mathcal R_{ Y}\cap  \mathbb E_2.
\end{equation}
    \item [(ii)] For any sequence $\{f_{n}\}\subset \mathcal R_{Y}$ satisfying
$\lim_{n\to+\infty}\mathcal E(f_{n})=M,$
 there is a subsequence  of  $\{f_n\}$, denoted by $\{f_{n_{j}}\}$, such that   $f_{n_{j}}$ converges to some $\tilde f \in\mathcal M$ strongly in $L^p(\mathbb S^2)$ as $j\to+\infty.$ In particular, $\mathcal M$ is compact in $L^p(\mathbb S^2)$.
  \end{itemize}

  \end{proposition}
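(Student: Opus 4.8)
The plan is to follow the strategy of Proposition~\ref{yc00} almost verbatim, exploiting the fact that the linear term $\frac{1}{6}\beta c_1^0$ present in the $\alpha\neq 0$ case is now absent, so the variational analysis becomes purely quadratic and somewhat simpler. As before, the whole argument is carried out in $\mathring L^2(\mathbb S^2)$ using the spectral representation \eqref{lkj}, and then transferred to the $L^p$ setting through the weak continuity of $\mathcal E$ and the uniform-convexity lemmas.

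For part (i), I would start from \eqref{lkj}. For any $f\in\mathcal R_Y\subset\mathring L^2(\mathbb S^2)$ with expansion \eqref{h01}, the estimate
\[
\mathcal E(f)=\frac12\sum_{j\geq 2}\sum_{|m|\leq j}\frac{|c_j^m|^2}{j(j+1)}
\leq \frac{1}{12}\sum_{j\geq 2}\sum_{|m|\leq j}|c_j^m|^2
=\frac{1}{12}\Big(\|f\|_{L^2(\mathbb S^2)}^2-\sum_{|m|\leq 1}|c_1^m|^2\Big)
\leq \frac{1}{12}\|f\|_{L^2(\mathbb S^2)}^2=\frac{1}{12}\|Y\|_{L^2(\mathbb S^2)}^2
\]
follows by using $\frac{1}{j(j+1)}\leq\frac16$ for $j\geq 2$ in the first inequality and Parseval's identity together with $f\in\mathcal R_Y$ in the last equality. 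Tracking equality cases, the first inequality is saturated precisely when $c_j^m=0$ for all $j\geq 3$ (since $\frac{1}{j(j+1)}<\frac16$ strictly for $j\geq 3$), and the second precisely when $c_1^{-1}=c_1^0=c_1^1=0$; jointly these say $f\in\mathbb E_2$. Since $Y\in\mathcal R_Y\cap\mathbb E_2$ realises the bound, the supremum equals $\frac{1}{12}\|Y\|_{L^2(\mathbb S^2)}^2$ and $\mathcal M=\mathcal R_Y\cap\mathbb E_2$, which is \eqref{t45}.

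For part (ii), I would use the same weak-compactness scheme as in Proposition~\ref{yc00} and the warm-up proofs. Given a maximizing sequence $\{f_n\}\subset\mathcal R_Y$, it is bounded in $L^p$, so a subsequence $f_{n_j}$ converges weakly in $L^p$, hence weakly in $L^2$, to some $\tilde f$. Weak lower semicontinuity of the $L^2$ norm gives $\|\tilde f\|_{L^2(\mathbb S^2)}\leq\|Y\|_{L^2(\mathbb S^2)}$, while weak continuity of $\mathcal E$ on $\mathring L^p$ (a consequence of the compactness of $\mathcal G$, Lemma~\ref{pg0}(ii)) gives $\mathcal E(\tilde f)=M$. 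Running the chain of inequalities above for $\tilde f$ and comparing with $\mathcal E(\tilde f)=M$ forces every inequality to be an equality, in particular $\|\tilde f\|_{L^2(\mathbb S^2)}=\|Y\|_{L^2(\mathbb S^2)}$. Lemma~\ref{ucvx} then upgrades weak to strong $L^2$ convergence, and Lemma~\ref{rl3} (strong closedness of $\mathcal R_Y$) yields $\tilde f\in\mathcal R_Y$, hence $\tilde f\in\mathcal M$. Finally $\|f_{n_j}\|_{L^p(\mathbb S^2)}=\|Y\|_{L^p(\mathbb S^2)}=\|\tilde f\|_{L^p(\mathbb S^2)}$ for every $j$, so Lemma~\ref{ucvx} applied once more promotes the convergence to the strong $L^p$ topology.

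I do not expect any genuine obstacle in this proposition: it is strictly easier than its $\alpha\neq 0$ counterpart, because the absence of the linear term eliminates both the quadratic optimisation in $c_1^0$ and the attendant equality discussion. The only point needing a little care is that the equality analysis in part (i) be logically complete, so that $\mathcal M$ is exactly $\mathcal R_Y\cap\mathbb E_2$ and not a larger set. The genuinely substantial work for the $\alpha=0$ theorem lies not here but in the subsequent step of showing that $\mathcal O^+_Y$ is isolated in $\mathcal M$.
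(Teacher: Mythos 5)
Your proposal is correct and follows essentially the same route as the paper's own proof: part (i) is the identical spectral estimate via \eqref{lkj} with $\frac{1}{j(j+1)}\leq\frac16$ and the same equality-case tracking, and part (ii) is the same weak-compactness scheme (weak $L^p$ limit, lower semicontinuity of the $L^2$ norm, weak continuity of $\mathcal E$, then Lemma \ref{ucvx} and Lemma \ref{rl3} to upgrade to strong $L^2$ and finally strong $L^p$ convergence). Your observation that this proposition is a simplification of Proposition \ref{yc00} with the linear term removed is exactly how the paper presents it.
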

 \begin{proof}
 The proof is analogous to that of Proposition  \ref{yc00}. For the reader's convenience, we provide the details below.
 
 First we prove (i).
Fix $f\in\mathcal R_{Y}.$ Assume that $f$ has the expansion  \eqref{h01}.
In view of \eqref{lkj}, we can repeat the argument in \eqref{t43} by setting $\beta=0$ to deduce that
\begin{equation}\label{h05}
\mathcal E(f)\leq \frac{1}{12} \|f\|^{2}_{L^{2}(\mathbb S^{2})} = \frac{1}{12} \|Y\|^{2}_{L^{2}(\mathbb S^{2})}.
\end{equation}
Moreover, the equality holds if and only if $f\in\mathbb E_{2}.$   This proves \eqref{t45}.

Next we prove (ii).    Since $\{f_{n}\}$ is bounded in $\mathring L^{p}(\mathbb S^{2}),$ there is a subsequence $\{f_{n_{j}}\}$  such that   $f_{n_{j}}$ converges to some $\tilde f $ weakly in $\mathring  L^p(\mathbb S^2)$ as $j\to+\infty.$
     Below we show that $\tilde f\in \mathcal M$ and $f_{n_{j}}$ converges to  $\tilde f$ strongly in $L^p(\mathbb S^2)$ as $j\to+\infty.$
Since $\{f_{n}\}$ is bounded in $\mathring L^{2}(\mathbb S^{2}),$ we see  that  $f_{n_{j}}$ also converges to   $\tilde f $ weakly in $\mathring  L^2(\mathbb S^2)$ as $j\to+\infty.$    As a result,
\begin{equation}\label{t8989}
\|\tilde f\|_{L^{2}(\mathbb S^{2})}\leq \liminf _{j\to+\infty}\|f_{n_{j}}\|_{L^{2}(\mathbb S^{2})}=\|Y\|_{L^{2}(\mathbb S^{2})}.
\end{equation}
Consider the Fourier expansion of $\tilde f$:
\begin{equation*}
\tilde f=\sum_{j\geq 1}\sum_{|m|\leq j}\tilde c_j^mY_j^m,\quad \tilde c_{j}^{m}=\int_{\mathbb S^{2}}\tilde f\overline{Y_{j}^{m}}d\sigma.
\end{equation*}
Repeating the argument in \eqref{t499} by setting $\beta=0$ and using \eqref{t8989}, we have that
        \begin{equation}\label{t4999}
       \mathcal E(\tilde f)     
        \leq \frac{1}{12}\|\tilde f\|_{L^{2}(\mathbb S^{2})}\leq \frac{1}{12}   \| Y\|_{L^2(\mathbb S^2)}^2.
   \end{equation}
On the other hand, by the weak continuity of $\mathcal E$ in $\mathring   L^{p}(\mathbb S^{2})$,
\begin{equation}\label{t9090}
\mathcal E( \tilde f)=\lim_{j\to+\infty}\mathcal E(f_{n_j})=M= \frac{1}{12}\|Y\|_{L^{2}(\mathbb S^{2})}^2.
\end{equation}
With \eqref{t9090}, we see that   the two inequalities in \eqref{t4999} are in fact equalities. In particular,
\begin{equation}\label{t9094}
\|\tilde f\|_{L^{2}(\mathbb S^{2})}=\|Y\|_{L^{2}(\mathbb S^{2})}.
\end{equation}
Applying Lemma \ref{ucvx}, we deduce from \eqref{t9094} that $f_{n_{j}}$ converges to $\tilde f$ strongly in $L^{2}(\mathbb S^{2})$  as $j\to+\infty$.
Then by Lemma \ref{rl3}, we obtain
 \begin{equation}\label{t9494}
 \tilde f\in\mathcal R_{Y},
 \end{equation}
 which in conjunction with \eqref{t9090}  implies that $\tilde f$ is a maximizer of $\mathcal E$ relative to $\mathcal R_{Y}.$  Besides, by \eqref{t9494} and the fact that $\{f_{n_j}\}\subset\mathcal R_Y$,   we have that
 \[\|f_{n_j}\|_{L^p(\mathbb S^2)}=\|\tilde f\|_{L^p(\mathbb S^2)},\quad \forall\,j,\]
Applying Lemma   \ref{ucvx} again, we  obtain the desired strong convergence in $L^p(\mathbb S^2)$.

 \end{proof}

\subsection{$\mathcal M=\mathcal O_{Y}$}

Recall \eqref{oy} in Section \ref{sec21} for the definition of $\mathcal O_Y$. The following lemma shows that $\mathcal O_{Y}$ is exactly the set of maximizers of $\mathcal E$ relative to $\mathcal R_Y$.
\begin{proposition}\label{sh1}
  $\mathcal M=\mathcal O_Y$.
\end{proposition}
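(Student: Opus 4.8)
The plan is to prove the two inclusions $\mathcal O_Y\subset\mathcal M$ and $\mathcal M\subset\mathcal O_Y$ separately, the first being routine and the second carrying all the content. Recall from Proposition \ref{yc00i1}(i) that $\mathcal M=\mathcal R_Y\cap\mathbb E_2$. For the easy inclusion, observe that every $\mathsf g\in\mathbf O(3)$ acts on $\mathbb S^2$ as an isometry, hence preserves the area measure $\mathsf m$; therefore $Y\circ\mathsf g\in\mathcal R_Y$. Moreover, since isometries commute with the Laplace--Beltrami operator, each eigenspace $\mathbb E_j$ is $\mathbf O(3)$-invariant, so $Y\circ\mathsf g\in\mathbb E_2$. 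Thus $\mathcal O_Y\subset\mathcal R_Y\cap\mathbb E_2=\mathcal M$.

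For the reverse inclusion, the key device is the classical identification of $\mathbb E_2$ with the space $\mathrm{Sym}_0$ of trace-free symmetric $3\times 3$ real matrices: to $A\in\mathrm{Sym}_0$ associate $Y_A(\mathbf x):=\mathbf x^\top A\,\mathbf x$ restricted to $\mathbb S^2$, which is a degree-$2$ spherical harmonic (harmonicity being equivalent to $\mathrm{tr}\,A=0$), and every element of $\mathbb E_2$ arises this way since $A\mapsto Y_A$ is a linear isomorphism of $5$-dimensional spaces. Under this identification the action $f\mapsto f\circ\mathsf g$ becomes $A\mapsto Q^\top A Q$, where $Q\in\mathbf O(3)$ is the matrix of $\mathsf g$, because $Y_A(Q\mathbf x)=\mathbf x^\top(Q^\top AQ)\mathbf x$. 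By the spectral theorem, two matrices $A,B\in\mathrm{Sym}_0$ are orthogonally congruent if and only if they share the same unordered eigenvalues. Hence, writing $Y=Y_A$,
\[
\mathcal O_Y=\{Y_B\mid B\in\mathrm{Sym}_0,\ \mathrm{spec}(B)=\mathrm{spec}(A)\},
\]
and it remains to show that any $f=Y_B\in\mathcal M$ satisfies $\mathrm{spec}(B)=\mathrm{spec}(A)$.

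The central step is therefore to show that the distribution function of $Y_A$ determines $\mathrm{spec}(A)$. Since $f=Y_B\in\mathcal R_Y$, equimeasurability yields $\int_{\mathbb S^2}f^m\,d\sigma=\int_{\mathbb S^2}Y^m\,d\sigma$ for every $m$. Diagonalizing $A=\mathrm{diag}(\lambda_1,\lambda_2,\lambda_3)$ and using the standard sphere integrals $\int_{\mathbb S^2}x_i^2\,d\sigma=\tfrac{4\pi}{3}$, $\int_{\mathbb S^2}x_i^2x_j^2\,d\sigma=\tfrac{4\pi}{15}$ for $i\neq j$, together with their degree-$6$ analogues, a direct computation gives $\int_{\mathbb S^2}Y_A^2\,d\sigma=\tfrac{8\pi}{15}p_2$ and $\int_{\mathbb S^2}Y_A^3\,d\sigma=\tfrac{32\pi}{105}p_3$, where $p_k=\lambda_1^k+\lambda_2^k+\lambda_3^k$ are the power sums; these two moments are exactly $I_2$ and $I_3$ of Section \ref{sec6} evaluated at $\alpha=0$. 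Because $p_1=\mathrm{tr}\,A=0$, Newton's identities reduce the elementary symmetric functions to $e_1=0$, $e_2=-\tfrac12 p_2$, $e_3=\tfrac13 p_3$, so the characteristic polynomial $t^3+e_2t-e_3$, and hence the eigenvalue multiset, is determined by $p_2$ and $p_3$ alone. Equality of moments thus forces $\mathrm{spec}(B)=\mathrm{spec}(A)$, whence $f=Y_B\in\mathcal O_Y$ by the spectral theorem, completing the proof.

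I expect the main obstacle to be precisely this last injectivity assertion, namely that equimeasurability of two degree-$2$ harmonics forces equal eigenvalues. The moment computations themselves are routine and are in fact already encoded in $I_2,I_3$ of Section \ref{sec6}; the conceptual content is that $p_2$ and $p_3$ generate the algebra of $\mathbf O(3)$-invariants on $\mathrm{Sym}_0$ and that these two lowest moments already separate orbits. One must verify that no higher moment is needed and that the trace-free constraint $p_1=0$ is genuinely used to pin down the \emph{full} characteristic polynomial from $p_2$ and $p_3$.
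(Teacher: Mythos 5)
Your proof is correct and takes essentially the same route as the paper's: both reduce to $\mathcal M=\mathcal R_Y\cap\mathbb E_2$ via Proposition \ref{yc00i1}, identify degree-2 spherical harmonics with trace-free symmetric $3\times 3$ matrices, extract the second and third moments from equimeasurability, and conclude via equality of characteristic polynomials together with the spectral theorem. The only difference is presentational: you compute the moments invariantly (diagonalization, power sums $p_2,p_3$, Newton's identities), while the paper performs the same computation in the explicit coordinates $a,b,c,d,e$ and matches the characteristic polynomials coefficient by coefficient.
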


 \begin{proof}
 By Proposition \ref{yc00i1}, it is sufficient to show that  $\mathcal O_Y=\mathcal R_{Y}\cap\mathbb E_2.$
Since the   inclusion $\mathcal O_Y\subset \mathcal R_{Y}\cap \mathbb E_2$ is obvious, the rest of the proof is devoted to verifying  
\begin{equation}\label{bnjn1}
\mathcal R_{Y}\cap \mathbb E_2\subset \mathcal O_Y.
\end{equation}  

Fix $f\in\mathcal R_Y\cap\mathbb E_{2}.$ Suppose $f$ has the form
 \[f=a'(3\sin^2\theta-1)+b'\sin(2\theta)\cos\varphi+ c'\sin(2\theta)\sin\varphi+d'\cos^2\theta\cos(2\varphi)+e'\cos^2\theta\sin(2\varphi),\]
 where $a',b',c',d',e'\in\mathbb R.$
Since $f\in\mathcal R_Y$, we have that
\begin{equation}\label{me2}
\int_{\mathbb S^2}f^2d\sigma=\int_{\mathbb S^2}Y^2d\sigma,
\end{equation}
 \begin{equation}\label{me3}
\int_{\mathbb S^2}f^3d\sigma=\int_{\mathbb S^2}Y^3d\sigma.
\end{equation}
 Recall \eqref{yyd2}. By some straightforward calculations (cf. \eqref{i2com} and \eqref{i3com}), we obtain from  \eqref{me2} that
\begin{equation}\label{m22}
    3a^2+b^2+c^2+d^2+e^2=3a'^2+b'^2+c'^2+d'^2+e'^2,
\end{equation}
 and from  \eqref{me3} that
\begin{equation}\label{m33}
\begin{split}
&2a^3+ab^2+ac^2-2ad^2-2ae^2+b^2d+2bce-c^2d\\
=&2a'^3+a'b'^2+a'c'^2-2a'd'^2-2a'e'^2+b'^2d'+2b'c'e'-c'^2d'.
\end{split}
\end{equation}

Below we show that  \eqref{m22} and \eqref{m33} ensure $f\in \mathcal O_Y$.
To this end,  denote
\[z=\sin\theta,\quad x=\cos\theta\cos\varphi,\quad y=\cos\theta\sin\varphi.\]
 Then $Y$ and $f$ can be written as the following quadratic forms:
 \[Y=a(3z^2-1)+2bxz+2cyz+d(x^2-y^2)+2exy,\]
  \[f=a'(3z^2-1)+2b'xz+2c'yz+d'(x^2-y^2)+2e'xy,\]
  or equivalently,
\begin{equation}\label{mf1}
Y=(x,y,z)\left(\begin{array}{ccc}
    d& e&b \\
    e &-d&c\\
    b&c&3a
\end{array}\right)\left(\begin{array}{c}
     x\\
     y\\
     z
\end{array}\right)-a,
\end{equation}
\begin{equation}\label{mf2}
f=(x,y,z)\left(\begin{array}{ccc}
    d'& e'&b' \\
    e' &-d'&c'\\
    b'&c'&3a'
\end{array}\right)\left(\begin{array}{c}
     x\\
     y\\
     z
\end{array}\right)-a'.
\end{equation}
  Taking into account the fact that $x^2+y^2+z^2=1,$ we obtain from \eqref{mf1} and  \eqref{mf2}  that
\[Y=(x,y,z)A\left(\begin{array}{c}
     x\\
     y\\
     z
\end{array}\right),\quad A:=\left(\begin{array}{ccc}
    d-a& e&b \\
    e &-d-a&c\\
    b&c&2a
\end{array}\right),\]
    \[f=(x,y,z)A'\left(\begin{array}{c}
     x\\
     y\\
     z
\end{array}\right),\quad A':=\left(\begin{array}{ccc}
    d'-a'& e'&b' \\
    e' &-d'-a'&c'\\
    b'&c'&2a'
\end{array}\right).\]
To show $f\in \mathcal O_Y,$ it suffices to show that there exists a $3\times 3$ orthogonal matrix $P$ such that
\begin{equation}\label{apa}
A=P^TA'P,
\end{equation} where $P^T$ denotes the transpose of $P$. Notice that  both $A$ and $A'$ are symmetric. Therefore, to prove \eqref{apa}, we need only to show that $A,$ $A'$ have the same eigenvalues, or equivalently, $A,$ $A'$ have the same characteristic polynomial.
By  straightforward calculations, the characteristic polynomial $  \mathsf P_A$  of $A$ is
\begin{align*}
   \mathsf P_A(\lambda)= &\lambda^3 -(3a^2 + b^2 + c^2 + d^2 + e^2)\lambda - 2a^3 - ab^2 - ac^2  + 2ad^2 + 2ae^2 - b^2d  - 2bce\\
   &+ c^2d,
\end{align*}
and the characteristic polynomial $  \mathsf P_{A'}$  of $A'$ is
\begin{align*}
\mathsf P_{A'}(\lambda)=&\lambda^3 -(3a'^2 + b'^2 + c'^2 + d'^2 + e'^2)\lambda - 2a'^3 - a'b'^2 - a'c'^2 + 2a'd'^2 + 2a'e'^2 - b'^2d' \\
&- 2b'c'e'+ c'^2d'.
\end{align*}
Taking into account \eqref{m22} and \eqref{m33}, we obtain $\mathsf P_A=\mathsf P_{A'}$. Hence the proof is finished.
 \end{proof}

 \begin{remark}
Proposition \ref{sh1}  classifies the space $\mathbb E_{2}$ according to the equivalence relation of equimeasurability. More specifically, for any $f,g\in\mathbb E_{2}$, $f$ and $g$ are equimeasurable if and only if $g=f\circ\mathsf g$ for some $\mathsf g\in\mathbf O(3)$.
\end{remark}

\subsection{Proof of Theorem \ref{thm2}}\label{sec73}

First we need the following lemma analogous to   Lemma \ref{too}.

\begin{lemma}\label{tppp}
Let $f$ be a function on $\mathbb S^2$.
Denote  
 \begin{equation}\label{oyn}
 \mathcal O^-_f=\{f\circ\mathsf g\mid \mathsf g\in\mathbf O(3)\setminus \mathbf S\mathbf O(3)\}.
 \end{equation}
 Then either $\mathcal O_f^+\cap  \mathcal O_f^-=\varnothing$ or $\mathcal O^+_f=\mathcal O_f^-=\mathcal O_f$.
\end{lemma}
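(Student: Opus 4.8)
The plan is to transcribe the proof of Lemma \ref{too} almost verbatim, replacing the planar groups $\mathbf H$, $\mathbf H^+$ by the full orthogonal groups $\mathbf O(3)$, $\mathbf S\mathbf O(3)$. The only structural input I will need is the multiplicativity of the determinant together with the fact that $\det=+1$ on $\mathbf S\mathbf O(3)$ and $\det=-1$ on $\mathbf O(3)\setminus\mathbf S\mathbf O(3)$; this is precisely what replaces the orientation bookkeeping used in Lemma \ref{too}.

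First I would record the trivial coset decomposition $\mathbf O(3)=\mathbf S\mathbf O(3)\cup(\mathbf O(3)\setminus\mathbf S\mathbf O(3))$, which immediately gives $\mathcal O_f=\mathcal O^+_f\cup\mathcal O^-_f$. If $\mathcal O^+_f\cap\mathcal O^-_f=\varnothing$ the first alternative holds and there is nothing to prove, so I would assume $\mathcal O^+_f\cap\mathcal O^-_f\neq\varnothing$ and aim to show $\mathcal O^+_f=\mathcal O^-_f$; together with the decomposition this forces $\mathcal O^+_f=\mathcal O^-_f=\mathcal O_f$, which is the second alternative.

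Under this assumption there exist $\mathsf g_1\in\mathbf S\mathbf O(3)$ and $\mathsf g_2\in\mathbf O(3)\setminus\mathbf S\mathbf O(3)$ with $f\circ\mathsf g_1=f\circ\mathsf g_2$, hence $f=f\circ\mathsf g_2\circ\mathsf g_1^{-1}$. I would set $\mathsf h:=\mathsf g_2\circ\mathsf g_1^{-1}$ and note $\det\mathsf h=\det\mathsf g_2\,\det\mathsf g_1^{-1}=(-1)(+1)=-1$, so $\mathsf h\in\mathbf O(3)\setminus\mathbf S\mathbf O(3)$. To prove $\mathcal O^+_f\subset\mathcal O^-_f$, take any $f\circ\mathsf g\in\mathcal O^+_f$ with $\mathsf g\in\mathbf S\mathbf O(3)$; then $f\circ\mathsf g=f\circ\mathsf g_2\circ\mathsf g_1^{-1}\circ\mathsf g=f\circ(\mathsf h\circ\mathsf g)$, and since $\det(\mathsf h\circ\mathsf g)=(-1)(+1)=-1$ we get $\mathsf h\circ\mathsf g\in\mathbf O(3)\setminus\mathbf S\mathbf O(3)$, so $f\circ\mathsf g\in\mathcal O^-_f$. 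The reverse inclusion is identical: for $\mathsf g\in\mathbf O(3)\setminus\mathbf S\mathbf O(3)$ one has $\det(\mathsf h\circ\mathsf g)=(-1)(-1)=+1$, placing $f\circ\mathsf g=f\circ(\mathsf h\circ\mathsf g)$ in $\mathcal O^+_f$. This yields $\mathcal O^+_f=\mathcal O^-_f$, completing the argument.

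Since every step reduces to determinant arithmetic, I do not anticipate a genuine obstacle; the proof is a direct analogue of Lemma \ref{too}, with the sign of $\det$ playing the role that membership in $\mathbf H^+$ versus $\mathbf H\setminus\mathbf H^+$ played there. The single point requiring a line of care is checking, in each inclusion, that $\mathsf h\circ\mathsf g$ lands in the correct coset, and this is immediate from $\det(\mathsf h\circ\mathsf g)=\det\mathsf h\,\det\mathsf g$.
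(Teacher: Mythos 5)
Your proof is correct and is exactly what the paper intends: the paper omits the proof of this lemma, stating it is ``almost identical to that of Lemma \ref{too}'', and your argument is precisely that adaptation, with the determinant sign $\det(\mathsf h\circ\mathsf g)=\det\mathsf h\,\det\mathsf g$ supplying the coset bookkeeping that membership in $\mathbf H^+$ versus $\mathbf H\setminus\mathbf H^+$ provided there.
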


\begin{proof}
The proof  is almost identical to that of Lemma \ref{too}, therefore we omit it here. 
\end{proof}

Now we are ready to prove  Theorem \ref{thm2}.
\begin{proof}[Proof of Theorem \ref{thm2}]
Let $\mathcal E$ be defined by \eqref{zbb01} and $\mathcal M$ be defined by \eqref{ycc1e}. Then $\mathcal E$ and $\mathcal M$ satisfy  the assumptions in Theorem \ref{stac}.
Similar to the proof of \eqref{1opys3}, using Lemma \ref{tppp}, we can prove that $\mathcal O_{Y}^{+}$ is  isolated in $\mathcal O_{Y}$, which in combination with Proposition \ref{sh1} implies that $\mathcal O_{Y}^{+}$ is   isolated in $\mathcal M$.
 Hence the desired stability  of $\mathcal O_{Y}^{+}$ follows from Theorem \ref{stac} immediately.
 \end{proof}

\section{Theorems \ref{thm1} and \ref{thm2} are optimal}\label{sec8}
 
First we show that Theorem \ref{thm1} is optimal.
\begin{theorem}\label{thm81}
 
 Theorem \ref{thm1} is optimal, in the sense that
 there does not exist a compact set $\varnothing\neq \mathcal X\subsetneqq \mathcal H^{+}_{\alpha\sin\theta+Y}$  such that \eqref{hrb1} holds with $\mathcal H^{+}_{\alpha\sin\theta+Y}$ replaced by $\mathcal X$.
\end{theorem}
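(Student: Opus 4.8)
The plan is to argue by contradiction, exploiting the explicit traveling-wave solutions \eqref{dg2rh}: any \emph{proper} compact subset of the orbit will be ``escaped'' by a suitable nearby solution. So suppose there is a compact set $\mathcal X$ with $\varnothing\neq\mathcal X\subsetneq\mathcal H^+_{\alpha\sin\theta+Y}$ for which \eqref{hrb1} holds with $\mathcal H^+_{\alpha\sin\theta+Y}$ replaced by $\mathcal X$. Since $\mathcal X\subsetneq\mathcal H^+_{\alpha\sin\theta+Y}$, I fix a point $h\in\mathcal H^+_{\alpha\sin\theta+Y}\setminus\mathcal X$; because $\mathcal X$ is compact, hence closed, the number $\rho:=\min_{f\in\mathcal X}\|h-f\|_{L^p(\mathbb S^2)}$ is strictly positive. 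Writing $h=\alpha\sin\theta+Y_h$ with $Y_h\in\mathcal H^+_Y$ (using that $\alpha\sin\theta$ is invariant under rotations about the polar axis), the goal is to produce smooth solutions that start within any prescribed $\delta$ of $\mathcal X$ yet pass within $O(\epsilon)$ of $h$, so that at that later time their distance to $\mathcal X$ is at least $\rho-O(\epsilon)$.

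The main tool is that a degree-$2$ RH state with \emph{nonzero} traveling speed sweeps out its entire polar-axis orbit in finite time. Fix any $g\in\mathcal X$ and write $g=\alpha\sin\theta+\tilde Y$ with $\tilde Y\in\mathcal H^+_Y\subset\mathbb E_2$. For small $\epsilon\neq 0$ consider the initial datum
\[
\zeta_0^\epsilon=(\alpha+\epsilon)\sin\theta+\tilde Y=g+\epsilon\sin\theta,
\]
which is again a degree-$2$ RH state, now with parameter $\alpha+\epsilon$ and traveling speed $c_\epsilon=\tfrac13(\alpha+\epsilon)-\omega$. Choosing $\epsilon$ so small that $c_\epsilon\neq 0$, the corresponding smooth solution is the traveling wave $\zeta_t^\epsilon=(\alpha+\epsilon)\sin\theta+\tilde Y(\varphi-c_\epsilon t,\theta)$. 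As $t$ ranges over one period, $\tilde Y(\varphi-c_\epsilon t,\theta)$ runs through all rotations of $\tilde Y$, equivalently of $Y$, about the polar axis; hence there is a time $t_\epsilon>0$ with $\tilde Y(\varphi-c_\epsilon t_\epsilon,\theta)=Y_h$, whence $\zeta_{t_\epsilon}^\epsilon=h+\epsilon\sin\theta$. Thus $\min_{f\in\mathcal X}\|\zeta_0^\epsilon-f\|_{L^p(\mathbb S^2)}\le\|\zeta_0^\epsilon-g\|_{L^p(\mathbb S^2)}=|\epsilon|\,\|\sin\theta\|_{L^p(\mathbb S^2)}$, while the triangle inequality gives $\min_{f\in\mathcal X}\|\zeta_{t_\epsilon}^\epsilon-f\|_{L^p(\mathbb S^2)}\ge\rho-|\epsilon|\,\|\sin\theta\|_{L^p(\mathbb S^2)}$. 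Fixing $\varepsilon:=\rho/2$ in \eqref{hrb1} and letting $\epsilon\to0$ then contradicts the assumed stability of $\mathcal X$.

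I expect the only genuine subtlety to be the steady case $c=\tfrac13\alpha-\omega=0$ (that is, $\alpha=3\omega$), in which the unperturbed RH wave starting at $g\in\mathcal X$ does not move and therefore cannot be steered toward $h$. This is exactly why I perturb the zonal amplitude from $\alpha$ to $\alpha+\epsilon$: the perturbed state has speed $c_\epsilon=c+\tfrac13\epsilon\neq0$ for small $\epsilon\neq0$, so it genuinely travels and sweeps the orbit while staying $O(\epsilon)$-close to $g$ in $L^p$. The very same construction applies verbatim when $c\neq0$ (there one may even take $\epsilon=0$ and use the exact RH wave issuing from $g$), so the single $\epsilon$-perturbation handles all cases uniformly. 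The remaining points are routine and I would merely note them: $\zeta^\epsilon$ is a bona fide smooth solution since it is an explicit RH wave; $\mathcal H^+_{\alpha\sin\theta+Y}$ is a continuous image of a circle and hence compact; and $\|\sin\theta\|_{L^p(\mathbb S^2)}\in(0,\infty)$.
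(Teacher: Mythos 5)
Your proposal is correct and follows essentially the same route as the paper's proof: both perturb the zonal amplitude from $\alpha$ to $\alpha+\epsilon$ (the paper uses $\epsilon=1/n$) so that the resulting RH wave has nonzero traveling speed $c_\epsilon=\tfrac13(\alpha+\epsilon)-\omega$, sweeps out the whole polar-axis orbit while staying $O(\epsilon)$-close to it in $L^p$, and thereby carries data starting near a point of $\mathcal X$ to within $O(\epsilon)$ of a point $h\in\mathcal H^+_{\alpha\sin\theta+Y}\setminus\mathcal X$, contradicting the assumed stability of the proper compact subset $\mathcal X$. The only difference is organizational: the paper isolates the sweeping construction as a separate claim (connecting any two orbit points by approximating solution sequences) before deriving the contradiction, while you fold it directly into the contradiction argument.
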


 \begin{proof}
First we claim that for any $f_0,$ $\tilde f\in\mathcal H^+_{\alpha\sin\theta+Y}$, there exist a sequence of solutions $\{\zeta^n\}$ of the vorticity equation \eqref{ave} and a sequence $\{t_n\}\subset\mathbb R_+$ such that
 \begin{equation}\label{ppiio}
 \lim_{n\to+\infty}\|\zeta^n_0-f_0\|_{L^p(\mathbb S^2)}=0,\quad \lim_{n\to+\infty}\|\zeta^n_{t_n}-\tilde f\|_{L^p(\mathbb S^2)}=0.
 \end{equation}
Suppose $f_0=\alpha\sin\theta+Y_0,$ where $Y_0\in\mathbb E_2$ is a rotation of $Y$ about the polar axis.
    Consider a sequence of  solutions $\{\zeta^n\}$  of the vorticity equation \eqref{ave}   such that
\[\zeta^n_0=\left(\alpha+\frac{1}{n}\right)\sin\theta+Y_0,\quad n\in\mathbb Z_+.\]
Then $\zeta_0^n\to f_0$ in $L^p(\mathbb S^2)$ as $n\to+\infty.$ By \eqref{dg2rh}, we have that
 \[ \zeta^n_{t}(\varphi,\theta)=\left(\alpha+\frac{1}{n}\right)\sin\theta+Y_0(\varphi-  c_nt,\theta),\quad   c_n=\frac{1}{3}\left(\alpha+\frac{1}{n}\right) -\omega.\]
Notice that $c_n\neq 0$ if  $n$ is  large enough. In fact, if $\alpha\geq 3\omega,$ then $c_n>0$ for any $n\in\mathbb Z_+.$ If $\alpha< 3\omega,$ then $c_n<0$  as long as $n$ is sufficiently large.
Therefore,  for sufficiently  large $n$, it holds that
 \begin{equation}\label{ffsc}
 \{\zeta^n_{t}\mid t>0\}=\mathcal H^{+}_{(\alpha+n^{-1})\sin\theta+Y_0}=\mathcal H^{+}_{(\alpha+n^{-1})\sin\theta+Y}.
 \end{equation}
 Suppose $\tilde f=\alpha\sin\theta+\tilde Y,$ where $\tilde Y$ is a rotation of $Y$ about the polar axis.
 From \eqref{ffsc}, there exists a sequence $\{t_n\}\subset\mathbb R_+$ such that
 $\zeta^n_{t_n}=(\alpha+n^{-1})\sin\theta+\tilde Y,$ and thus
 $\zeta_{t_n}^n\to \tilde f$ in $L^p(\mathbb S^2)$ as $n\to+\infty$.  The claim has been proved.

 From the above claim, we can easily prove the theorem.
 Suppose  $\varnothing\neq \mathcal X\subsetneqq \mathcal H^{+}_{\alpha\sin\theta+Y}$ is compact in $L^p(\mathbb S^2)$. Fix $f_0\in\mathcal X.$
Since $\mathcal X\neq \mathcal H^+_{\alpha\sin\theta+Y}$,
there exits $\tilde f\in \mathcal H^+_{\alpha\sin\theta+Y}\setminus \mathcal X.$
By the above claim, there exist a sequence of solutions $\{\zeta^n\}$ of the vorticity equation \eqref{ave} and a sequence $\{t_n\}\subset\mathbb R_+$ such that \eqref{ppiio} holds.
Since  $\mathcal X$ is compact in $L^p(\mathbb S^2),$ we have that
\begin{equation}\label{ffc7}
\min_{f\in\mathcal X}\| f-\tilde f\|_{L^p(\mathbb S^2)}>\delta
\end{equation}
for some $\delta>0.$
From \eqref{ppiio} and \eqref{ffc7}, we have that
\begin{equation}\label{ffc8}
 \min_{f\in\mathcal X}\|\zeta^n_{t_n}- f\|_{L^p(\mathbb S^2)}>\delta \quad\mbox{for sufficiently large }n.
\end{equation}
To conclude, we have constructed a sequence of solutions $\{\zeta^n\}$ such that $\zeta^n_0$ converges to some $f_0\in\mathcal X$ in $L^p(\mathbb S^2)$ as $n\to+\infty$, but  \eqref{ffc8} holds for some sequence $\{t_n\}\subset\mathbb R_+$. This means that  $\mathcal X$ is not stable as in \eqref{hrb1}.
  \end{proof}

Next we show that Theorem \ref{thm2} is also optimal.   The proof makes use of generalized RH waves discussed in Appendix \ref{appc}.

  \begin{theorem}
 Theorem \ref{thm2} is optimal, i.e.,  there does not exist a compact set   $\varnothing\neq \mathcal X\subsetneqq \mathcal O^+_{Y}$  such that \eqref{hrb2} holds with $\mathcal O^+_{Y}$ replaced by $\mathcal X$.
  \end{theorem}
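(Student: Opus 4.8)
The plan is to mirror the proof of Theorem \ref{thm81}, the only genuinely new ingredient being the exploitation of the full rotational symmetry available when $\omega=0$. As in that proof, it suffices to establish the following reachability statement: for any $f_0,\tilde f\in\mathcal O^+_{Y}$ there exist a sequence $\{\zeta^n\}$ of solutions of the vorticity equation \eqref{ave} and times $\{t_n\}\subset\mathbb R_+$ with $\zeta^n_0\to f_0$ and $\zeta^n_{t_n}\to\tilde f$ in $L^p(\mathbb S^2)$. Granting this, optimality follows exactly as in Theorem \ref{thm81}: if $\varnothing\neq\mathcal X\subsetneqq\mathcal O^+_Y$ were compact and satisfied \eqref{hrb2}, one would pick $f_0\in\mathcal X$ and $\tilde f\in\mathcal O^+_Y\setminus\mathcal X$, use compactness of $\mathcal X$ to bound $\min_{f\in\mathcal X}\|\tilde f-f\|_{L^p(\mathbb S^2)}$ below by some $\delta>0$, and then produce solutions starting arbitrarily close to $f_0\in\mathcal X$ yet reaching within $\delta/2$ of $\tilde f$, contradicting the stability of $\mathcal X$.

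To prove the reachability claim I would first recall that $\mathbb E_1$ and $\mathbb E_2$ are invariant under $\mathbf S\mathbf O(3)$ and that, because $\omega=0$, the vorticity equation \eqref{ave} is $\mathbf S\mathbf O(3)$-invariant (cf. \cite{CG}, Section 2.3): if $\zeta$ is a solution then so is $\zeta\circ\mathsf g$ for every fixed $\mathsf g\in\mathbf S\mathbf O(3)$. Fix $f_0,\tilde f\in\mathcal O^+_Y$ and write $\tilde f=f_0\circ\mathsf q$ for some $\mathsf q\in\mathbf S\mathbf O(3)$ (possible since $f_0,\tilde f$ lie in the common orbit $\mathcal O^+_Y$). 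Every $\mathsf q\in\mathbf S\mathbf O(3)$ is a rotation through some angle about some axis, hence can be written as $\mathsf q=\mathsf h\,\mathsf r_{\beta}\,\mathsf h^{-1}$, where $\mathsf r_\beta\in\mathbf H^+$ denotes the rotation by $\beta$ about the polar axis and $\mathsf h\in\mathbf S\mathbf O(3)$ carries the polar axis to the axis of $\mathsf q$. The idea is to generate $\mathsf q$ dynamically by running a degree-$2$ travelling wave whose rotation axis has been tilted by $\mathsf h$.

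Concretely, set $f_0'=f_0\circ\mathsf h\in\mathbb E_2$ and consider the solutions $\eta^n$ with initial data $\eta^n_0=\tfrac1n\sin\theta+f_0'$; these are degree-$2$ RH waves with $\alpha=1/n$, so by \eqref{dg2rh} (with $\omega=0$, whence $c_n=\tfrac1{3n}\neq0$) one has $\eta^n_t=\tfrac1n\sin\theta+f_0'\circ\mathsf r_{c_n t}^{-1}$. Applying the $\mathbf S\mathbf O(3)$-invariance, $\zeta^n_t:=\eta^n_t\circ\mathsf h^{-1}$ is again a solution, and
\[
\zeta^n_0=\tfrac1n(\sin\theta\circ\mathsf h^{-1})+f_0,\qquad
\zeta^n_t=\tfrac1n(\sin\theta\circ\mathsf h^{-1})+f_0\circ\bigl(\mathsf h\,\mathsf r_{c_n t}^{-1}\,\mathsf h^{-1}\bigr).
\]
Thus $\zeta^n_0\to f_0$ in $L^p(\mathbb S^2)$, and choosing $t_n>0$ so that $\mathsf h\,\mathsf r_{c_n t_n}^{-1}\,\mathsf h^{-1}=\mathsf q$ (possible because $c_n>0$ and $\mathsf r_\beta$ is $2\pi$-periodic in $\beta$) gives $\zeta^n_{t_n}=\tfrac1n(\sin\theta\circ\mathsf h^{-1})+\tilde f\to\tilde f$. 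This proves the claim.

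The main obstacle — and the essential point distinguishing this from Theorem \ref{thm81} — is precisely the step of reaching an arbitrary $\mathsf q\in\mathbf S\mathbf O(3)$ rather than merely a rotation about the polar axis. In the $\alpha\neq0$ setting the Coriolis term pins down the polar axis and a single travelling wave only sweeps out the polar orbit $\mathcal H^+$; here the vanishing of $\omega$ frees the axis, and conjugating the polar travelling wave by an arbitrary $\mathsf h$ lets the reachable rotations $\mathsf h\,\mathsf r_\beta^{-1}\,\mathsf h^{-1}$ fill all of $\mathbf S\mathbf O(3)$. One should finally check the routine points, none of which presents difficulty: that $\sin\theta\circ\mathsf h^{-1}$ has $L^p$-norm $O(1/n)$, that $f_0'=f_0\circ\mathsf h$ indeed lies in $\mathbb E_2$ so that $\eta^n_0$ is a bona fide degree-$2$ RH state, and that $t_n$ may be taken positive.
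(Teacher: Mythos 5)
Your proposal is correct and is essentially the paper's own argument: the paper invokes Euler's rotation theorem and the $\mathbf S\mathbf O(3)$-invariance of \eqref{ave} for $\omega=0$ to assume without loss of generality that the rotation carrying $f_0$ to $\tilde f$ is about the polar axis, then runs the travelling wave with initial data $\tfrac1n\sin\theta+f_0$; your explicit conjugation by $\mathsf h$ of the tilted travelling wave is exactly an unpacking of that ``without loss of generality'' step.
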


\begin{proof}

As in the proof of Theorem \ref{thm81}, it suffices to show  that for any $f_0,$ $\tilde f\in\mathcal O^+_{Y}$, there exist a sequence of solutions $\{\zeta^n\}$ of the vorticity equation \eqref{ave} and a sequence $\{t_n\}\subset\mathbb R_+$ such that
\begin{equation}\label{zzs2}
 \lim_{n\to+\infty}\|\zeta^n_0-f_0\|_{L^p(\mathbb S^2)}=0,\quad \lim_{n\to+\infty}\|\zeta^n_{t_n}-\tilde f\|_{L^p(\mathbb S^2)}=0.
\end{equation}
By Euler's rotation theorem (cf. \cite{PPR}), we  can choose $\mathbf p\in\mathbb S^{2}$ and $\mu\in\mathbb R$ such that 
\[\tilde f(\mathbf x)=f_{0}(\mathsf R^{\mathbf p}_{\mu}\mathbf x),\quad\mathbf x\in\mathbb S^{2},\]
where   $\mathsf R^{\mathbf p}_{\mu}$ is the rotation matrix with  rotation axis $\mathbf p$ and rotation angle $\mu$. 
By Proposition \ref{lem999} in Appendix \ref{appc},  we can choose a sequence of solutions of the vorticity equation \eqref{ave}:
\[\zeta^{n}(\mathbf x,t)=\alpha_{n}\mathbf p\cdot(\mathsf R^{\mathbf e_3}_{\omega t}\mathbf x)+f_{0}(\mathsf R^{\mathbf p}_{-c_{n}t}\mathsf R^{\mathbf e_{3}}_{\omega t}\mathbf x),  \quad c_{n}=\frac{1}{3}\alpha_{n},\,\,n\in\mathbb Z_+,\]
where $\alpha_{n}\in\mathbb R$ is to be determined. To finish the proof, it suffices to show that there exist a sequence $\{\alpha_{n}\}\subset\mathbb R$ and a sequence $\{t_{n}\}\subset\mathbb R_{+}$ such that 
\[\lim_{n\to+\infty}\alpha_{n}= 0,\quad \mathsf R^{\mathbf p}_{-c_{n}t_{n}}\mathsf R^{\mathbf e_{3}}_{\omega t_{n}}=\mathsf R^{\mathbf p}_{\mu} \quad\forall\,n.\]
We distinguish two cases:
\begin{itemize}
\item [(i)] ($\omega=0$) In this case, we can choose 
\[t_{n}=n^{2},\quad\alpha_{n}=-3\left(\frac{2\pi}{n}+\frac{\mu}{n^{2}}\right)\]
such that $-c_{n}t_{n}=2n\pi+\mu$.
\item [(ii)]($\omega\neq 0$) In this case, we can choose 
\[t_{n}=\frac{2 n^{2}\pi}{|\omega|},\quad \alpha_{n}=-3|\omega|\left(\frac{1}{n}+\frac{\mu}{2n^{2}\pi}\right)\]
such that $-c_{n}t_{n}=2n\pi+\mu$.
\end{itemize}
The proof is finished.

 \end{proof}

\appendix

\section{Characterizations of  $\mathcal H_Y$ in terms of  Fourier coefficients}\label{appb}

In this appendix, we characterize the set $\mathcal H_Y$ in terms of the Fourier coefficients of $Y$ for $Y\in\mathbb E_1$ and $Y\in\mathbb E_2$.

First we consider the case $Y\in\mathbb E_1$.
\begin{lemma}\label{ce1}
Suppose $Y\in\mathbb E_1$ has the form
\[Y=a Y_1^0+bY_1^1+cY_1^{-1},\quad a\in\mathbb R,\quad b=-\overline c.\]
Let $f\in\mathbb E_2$ be given by
\begin{equation*}
f =a'Y_1^0 +b'Y_1^1+c'Y_1^{-1},\quad  a'\in\mathbb R,\quad b'=-\overline c'.
\end{equation*}
Then  $f\in\mathcal H_Y$ if and only if the constants $a',b',c'$ satisfy
\begin{equation*}
a'=a,\quad |b'|=|b|.
\end{equation*}
\end{lemma}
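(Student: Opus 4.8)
The plan is to pass to explicit real spherical coordinates, which collapses the whole statement into matching one zonal coefficient and one azimuthal amplitude. First I would use the reality constraint $b=-\overline c$ to merge the non-zonal part of $Y$ into a single cosine. Writing $b=|b|e^{i\gamma}$, so that $c=-\overline b=-|b|e^{-i\gamma}$, and recalling $Y_1^0=\sqrt{3/4\pi}\,\sin\theta$, $Y_1^{\pm1}=\mp\sqrt{3/4\pi}\,\cos\theta\,e^{\pm i\varphi}$, one computes
\[
bY_1^1+cY_1^{-1}=\sqrt{\tfrac{3}{4\pi}}\cos\theta\,(-be^{i\varphi}+ce^{-i\varphi})=-2|b|\sqrt{\tfrac{3}{4\pi}}\cos\theta\cos(\varphi+\gamma),
\]
whence
\[
Y=\sqrt{\tfrac{3}{4\pi}}\bigl(a\sin\theta-2|b|\cos\theta\cos(\varphi+\gamma)\bigr).
\]

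Next I would apply the description \eqref{hy2} of the action of $\mathbf H$, namely $\varphi\mapsto\pm\varphi+\beta$. The zonal term $a\sin\theta$ is left invariant, while the azimuthal factor transforms as $\cos(\varphi+\gamma)\mapsto\cos(\pm\varphi+\beta+\gamma)=\cos(\varphi\pm(\beta+\gamma))$ by evenness of the cosine; thus only the phase changes, and it ranges over all of $\mathbb R$ as $\beta$ varies (the reflection contributing no new functions, consistent with $\mathcal H_Y=\mathcal H_Y^+$ from \eqref{y1eq}). Hence
\[
\mathcal H_Y=\Bigl\{\sqrt{\tfrac{3}{4\pi}}\bigl(a\sin\theta-2|b|\cos\theta\cos(\varphi+\delta)\bigr)\;\Big|\;\delta\in\mathbb R\Bigr\}.
\]
Writing $f$ in the identical form with $a'$, $|b'|$, $\gamma'$ in place of $a$, $|b|$, $\gamma$, membership $f\in\mathcal H_Y$ is reduced to the existence of $\delta\in\mathbb R$ with $a'=a$ and $|b'|\cos(\varphi+\gamma')=|b|\cos(\varphi+\delta)$ identically in $\varphi$.

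Finally I would extract the two scalar conditions. The converse is immediate: if $a'=a$ and $|b'|=|b|$, choosing $\beta$ so that $\delta=\gamma'$ exhibits $f$ as $Y\circ\mathsf g$. For the forward direction I would use that $\sin\theta$, $\cos\theta\cos\varphi$, $\cos\theta\sin\varphi$ are mutually orthogonal in $L^2(\mathbb S^2)$: testing the identity $Y\circ\mathsf g=f$ against $\sin\theta$ yields $a=a'$, and computing the $L^2$ norm of the (phase-invariant) non-zonal part yields $|b|=|b'|$. The only point that needs care — the mild ``obstacle'' — is the uniqueness of this amplitude/phase representation, i.e.\ that the map $(a,|b|,\gamma)\mapsto a\sin\theta-2|b|\cos\theta\cos(\varphi+\gamma)$ cannot identify two distinct values of $(a,|b|)$; this is precisely the linear independence of the three real degree-$1$ harmonics, and the degenerate case $|b|=0$ (where $\gamma$ is immaterial) is covered automatically. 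With this uniqueness in hand the equivalence $a'=a,\ |b'|=|b|$ follows, completing the proof.
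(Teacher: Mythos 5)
Your proof is correct and follows essentially the same route as the paper's: both invoke the coordinate description \eqref{hy2} of the $\mathbf H$-action, compute how the degree-1 expansion transforms under $\varphi\mapsto\pm\varphi+\beta$, and conclude by matching coefficients, the only difference being that you work in the real amplitude--phase form $a\sin\theta-2|b|\cos\theta\cos(\varphi+\gamma)$ while the paper matches coefficients in the complex basis $\{Y_1^0,Y_1^{\pm1}\}$. If anything, your use of the evenness of cosine makes the reflection case slightly more transparent than the paper's stated transformation laws, but the substance of the argument is the same.
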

\begin{proof}
First by \eqref{hy2}, $f\in\mathcal H_Y$ if and only if there exists $\beta\in\mathbb R$ such that
\begin{equation}\label{zzx}
f(\varphi,\theta)=Y(\pm\varphi+\beta,\theta),\quad\forall\,\varphi,\,\theta\in\mathbb R.
\end{equation}
From the expression  of $Y_1^0$, $Y_1^1$ and $Y_1^{-1},$ we have that
\[Y_1^0(\pm\varphi+\beta,\theta)=Y_1^0(\varphi,\theta),\quad\forall\,\varphi,\,\theta\in\mathbb R,\]
\[Y_1^1(\pm\varphi+\beta,\theta)=e^{\pm i\beta}Y_1^1(\varphi,\theta),\quad\forall\,\varphi,\,\theta\in\mathbb R,\]
\[Y_1^{-1}(\pm\varphi+\beta,\theta)=e^{\mp i\beta}Y_1^{-1}(\varphi,\theta),\quad\forall\,\varphi,\,\theta\in\mathbb R.\]
Therefore \eqref{zzx} is equivalent to
\begin{equation}\label{zzx1}
f=aY_1^0+be^{\pm i\beta}Y_1^1+ce^{\mp i\beta}Y_1^{-1}.
\end{equation}
Comparing \eqref{zzx1} with the expression of $f$, we get the desired result.

\end{proof}

Next we consider the case $Y\in \mathbb E_2.$   To make it convenient for applications,  the characterization is given in terms of the Fourier coefficients associated with the real basis $\{R_j^m\}_{j\geq 1,|m|\leq j}$.

\begin{lemma}\label{ce2}
Suppose $Y\in\mathbb E_2 $ has the form
\begin{equation*}
\begin{split}
Y = a(3\sin^2\theta-1)+b\sin (2\theta)\cos\varphi+c\sin(2\theta)\sin\varphi&+d\cos^2\theta\cos(2\varphi) +e\cos^2\theta\sin(2\varphi),
\end{split}
\end{equation*}
where $a, b, c, d, e\in\mathbb R.$
Let $f\in\mathbb E_2$ be given by
\begin{equation*}
\begin{split}
f =a'(3\sin^2\theta-1) +b'\sin (2\theta)\cos\varphi+c'\sin(2\theta)\sin\varphi
 +d'\cos^2\theta\cos(2\varphi)+e'\cos^2\theta\sin(2\varphi),
\end{split}
\end{equation*}
where $ a', b', c', d', e'\in\mathbb R.$
 Then $f\in\mathcal H_{Y}$ if and only if
   $a',b',c',d',e'$ satisfy
     \begin{equation}\label{bcbpcp}
     \begin{cases}
     a=a',\\
     b^2+c^2=b'^2+c'^2,\\
     d^2+e^2=d'^2+e'^2,\\
     b^2d-c^2d+2bce=b'^2d'-c'^2d'+2b'c'e'.
     \end{cases}
     \end{equation}

 \begin{proof}

 First we prove the ``only if" part. Suppose $f\in\mathcal H_Y$. Then  there exists $\beta\in\mathbb R$ (cf.  \eqref{hy2} in Section \ref{sec21}) such that
 \begin{equation}\label{nf200}
 f(\varphi,\theta) = Y(\pm\varphi+\beta,\theta),\quad\forall\,\varphi,\,\theta\in\mathbb R.
 \end{equation}
 In view of the expression of $Y$,  we have that
 \begin{equation}\label{nf20}
 \begin{split}
 &Y(\pm\varphi+\beta,\theta)\\ =&a(3\sin^2\theta-1)+(b\cos\beta+c\sin\beta)\sin(2\theta)\cos\varphi \pm ( c\cos\beta-b\sin\beta)\sin(2\theta)\sin\varphi\\
 &+(d\cos(2\beta)+e\sin(2\beta))\cos^2\theta\cos(2\varphi) \pm( e\cos(2\beta)-d\sin(2\beta) )\cos^2\theta\sin(2\varphi),
 \end{split}
 \end{equation}
 Using the fact that
\[\{\sin\theta,\,\,3\sin^2\theta-1,\,\, \sin(2\theta)\cos\varphi,\,\,\sin(2\theta)\sin\varphi,\,\,\cos^2\theta\cos(2\varphi),\,\, \cos^2\theta\sin(2\varphi)\}\]
is a basis of
$\mathbb E_2$ (cf. \eqref{e2span}),  we can  compare \eqref{nf20} with the expression of $f$ to obtain
 \begin{equation}\label{nf21}
 \begin{split}
 & a=a', \quad b'=b\cos\beta+c\sin\beta,\quad c'=\pm ( c\cos\beta-b\sin\beta),\\
 & d'=d\cos(2\beta)+e\sin(2\beta),\quad e'=\pm( e\cos(2\beta)-d\sin(2\beta) ).
  \end{split}
  \end{equation}
 Then the desired relations  \eqref{bcbpcp} follows from \eqref{nf21} by straightforward calculations.

 Next we prove the ``if" part. Suppose that \eqref{bcbpcp} holds. Without loss of generality, we assume that $a=a'=0,$ and \begin{equation}\label{nbb1}
 r:=\sqrt{b^2+c^2}=\sqrt{b'^2+c'^2}>0,\quad s:=\sqrt{d^2+e^2}=\sqrt{d'^2+e'^2}>0.
 \end{equation}
In fact, if $r=0$ or $s=0,$ then the assertion that  $f\in\mathcal H_Y$ is quite obvious. By \eqref{nbb1}, we can choose $\xi, \eta, \mu,\nu \in\mathbb R$ such that
 \[b=r\cos\xi,\quad c=r\sin\xi,\quad d=s\cos(2\eta),\quad e=s\sin(2\eta),\]
  \[b'=r\cos\mu,\quad c'=r\sin\mu,\quad d'=s\cos(2\nu),\quad e'=s\sin(2\nu).\]
  Then $Y, f$ are expressed as follows:
  \begin{equation}\label{nf15}
Y(\varphi,\theta) = r\sin (2\theta)\cos(\varphi-\xi) +s\cos^2\theta\cos(2\varphi-2\eta),
\end{equation}
\begin{equation}\label{nf26}
f(\varphi,\theta) = r\sin (2\theta)\cos(\varphi-\mu)+s\cos^2\theta\cos(2\varphi-2\nu).
\end{equation}
In view of the last relation in \eqref{bcbpcp}, one can check that
 \begin{equation*}
   \cos(2\xi-2\eta) =\cos(2\mu-2\nu).
 \end{equation*}
Consequently there exists  some integer $k$ such that
 \begin{equation}\label{nf91}
     2\xi-2\eta=2\mu-2\nu+2k\pi,
 \end{equation}
 or
  \begin{equation}\label{nf92}
     2\xi-2\eta+2\mu-2\nu=2k\pi.
 \end{equation}
 If \eqref{nf91} holds, then by \eqref{nf15} and \eqref{nf26}, we have that
 \begin{align*}
     f(\varphi,\theta) &= r\sin (2\theta)\cos(\varphi-\mu)+s\cos^2\theta\cos(2\varphi-2\nu)\\
     &=r\sin (2\theta)\cos(\varphi-\xi+\xi-\mu)+s\cos^2\theta\cos(2\varphi-2\eta+2\eta-2\nu)\\
     &=r\sin (2\theta)\cos(\varphi-\xi+\xi-\mu)+s\cos^2\theta\cos(2\varphi-2\eta+2\xi-2\mu-2k\pi)\\
     &=r\sin (2\theta)\cos(\varphi-\xi+\xi-\mu)+s\cos^2\theta\cos(2\varphi-2\eta+2\xi-2\mu)\\
     &=Y(\varphi+\xi-\mu,\theta),
 \end{align*}
which belongs to $\mathcal H^+_{ Y}$. If \eqref{nf92}  holds, then by \eqref{nf15} and \eqref{nf26},  we have that
  \begin{align*}
     f(\varphi,\theta) &= r\sin (2\theta)\cos(\varphi-\mu)+s\cos^2\theta\cos(2\varphi-2\nu)\\
     &=r\sin (2\theta)\cos(\varphi+\xi-\xi-\mu)+s\cos^2\theta\cos(2\varphi+2\eta-2\eta-2\nu)\\
     &= r\sin (2\theta)\cos(\varphi+\xi-\xi-\mu)+s\cos^2\theta\cos(2\varphi+2\eta-2\xi-2\mu+2k\pi)\\
     &=r\sin (2\theta)\cos(-\varphi-\xi+ \xi+\mu)+s\cos^2\theta\cos(-2\varphi-2\eta+2\xi+2\mu)\\
     &=Y(-\varphi+\xi+\mu,\theta),
 \end{align*}
which belongs to $\mathcal H^-_{Y}$ (cf. \eqref{hyn20}  for the definition of $\mathcal H^-_{Y}$). The proof is finished.

 \end{proof}

\end{lemma}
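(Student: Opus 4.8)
The plan is to reduce the statement to an explicit description of how the coefficients transform under the group $\mathbf H$, and to recognize the four quantities in \eqref{bcbpcp} as a complete set of invariants for that action. Recalling \eqref{hy2}, membership $f\in\mathcal H_Y$ means precisely that $f(\varphi,\theta)=Y(\pm\varphi+\beta,\theta)$ for some $\beta\in\mathbb R$ and some choice of sign. It is convenient to package the degree-one angular coefficients into $z_1:=b+ic$ and the degree-two ones into $z_2:=d+ie$, and likewise $z_1':=b'+ic'$, $z_2':=d'+ie'$, the zonal coefficient $a$ being carried along unchanged. Writing the first-order part as $\mathrm{Re}[\overline{z_1}\,e^{i\varphi}]\sin(2\theta)$ and the second-order part as $\mathrm{Re}[\overline{z_2}\,e^{2i\varphi}]\cos^2\theta$, one reads off immediately that a rotation $\varphi\mapsto\varphi+\beta$ acts by $z_1\mapsto z_1e^{-i\beta}$, $z_2\mapsto z_2e^{-2i\beta}$, while the reflection $\varphi\mapsto-\varphi$ acts by $z_1\mapsto\overline{z_1}$, $z_2\mapsto\overline{z_2}$. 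In these coordinates the four conditions in \eqref{bcbpcp} become $a=a'$, $|z_1|=|z_1'|$, $|z_2|=|z_2'|$ and $\mathrm{Re}(\overline{z_1}^2z_2)=\mathrm{Re}(\overline{z_1'}^2z_2')$, since a short expansion gives $\mathrm{Re}(\overline{z_1}^2z_2)=b^2d-c^2d+2bce$.

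For the \emph{only if} direction I would simply apply these transformation rules. Each generator of $\mathbf H$ manifestly preserves $a$, $|z_1|$ and $|z_2|$; and since $\overline{z_1}^2z_2\mapsto e^{2i\beta}\overline{z_1}^2\cdot e^{-2i\beta}z_2=\overline{z_1}^2z_2$ under rotation and $\overline{z_1}^2z_2\mapsto\overline{(\overline{z_1}^2z_2)}$ under reflection, its real part is fixed by both generators, hence by all of $\mathbf H$. Therefore $f\in\mathcal H_Y$ forces \eqref{bcbpcp}.

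The substantive content is the \emph{if} direction, namely completeness of these invariants. Assuming \eqref{bcbpcp}, I would first reduce to $a=a'=0$, and dispose of the degenerate cases $z_1=0$ or $z_2=0$ separately (there $f\in\mathcal H_Y$ is immediate). In the generic case write $z_1=re^{i\xi}$, $z_2=se^{2i\eta}$ and $z_1'=re^{i\mu}$, $z_2'=se^{2i\nu}$ with $r,s>0$, so that $Y$ and $f$ take the normal forms $r\sin(2\theta)\cos(\varphi-\xi)+s\cos^2\theta\cos(2\varphi-2\eta)$ and the same with $(\mu,\nu)$. The fourth invariant then reads $r^2s\cos(2\xi-2\eta)=r^2s\cos(2\mu-2\nu)$, i.e.\ $\cos(2\xi-2\eta)=\cos(2\mu-2\nu)$, which splits into $2\xi-2\eta=2\mu-2\nu+2k\pi$ or $2\xi-2\eta=-(2\mu-2\nu)+2k\pi$. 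In the first case I would verify that $f=Y(\varphi+\xi-\mu,\theta)$, exhibiting $f$ as a rotation of $Y$; in the second that $f=Y(-\varphi+\xi+\mu,\theta)$, a reflected rotation. Either way $f\in\mathcal H_Y$, as claimed.

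The main obstacle is the angle bookkeeping in this last step. Because $\cos(\varphi-\xi)$ has period $2\pi$ in $\xi$ while $\cos(2\varphi-2\eta)$ has period $\pi$ in $\eta$, one must check that the integer multiple $2k\pi$ produced by $\cos A=\cos B$ is genuinely harmless when fed back into both angular factors \emph{simultaneously}; this is the one place where a careless argument could reconcile the degree-two part but not the degree-one part. Coupling the two factors through the single shift $\xi-\mu$ (respectively $\xi+\mu$) is precisely what makes the reconstruction go through, and excluding $r=0$ and $s=0$ beforehand is needed so that $\xi,\eta,\mu,\nu$ are well defined.
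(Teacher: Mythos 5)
Your proposal is correct and follows essentially the same route as the paper: necessity by checking how the $\mathbf H$-action transforms the coefficients, and sufficiency via the reduction to $a=a'=0$, the polar parametrization, the identity $\cos(2\xi-2\eta)=\cos(2\mu-2\nu)$, and the two-case reconstruction of $f$ as $Y(\varphi+\xi-\mu,\theta)$ or $Y(-\varphi+\xi+\mu,\theta)$. The complex packaging $z_1=b+ic$, $z_2=d+ie$ with the fourth invariant written as $\mathrm{Re}\left(\overline{z_1}^2z_2\right)$ is a clean streamlining of the paper's coefficient-by-coefficient computation, but the underlying argument is the same.
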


\section{On a polynomial system}\label{secap2}
In this appendix, we study a system of polynomial equations arising in the proof of Proposition \ref{yc088}.
\begin{lemma}\label{aps1}
Given $\alpha, b_1,b_2,b_3,b_4,b_5,b_6\in\mathbb R$ with $\alpha\neq 0$,   the following  polynomial system has at most two solutions:
 \begin{numcases}{}
  3x_1^2 +x_2+x_3=b_1,\label{01}\\
    4x_1^3 +7\alpha^2x_1+2x_1x_2-4x_1x_3+2x_4=b_2,\label{02}\\
       9x_1^2   + 2x_2 - x_3=b_3,\label{03}\\
        36x_1^3 -\alpha^2x_1 + 14x_1x_2 - 4x_1x_3   + 6x_4=b_4,\label{04}\\
         36x_1^3-\alpha^2x_1+ 10x_1x_2 - 4x_1x_3  +2x_4=b_5,  \label{05}\\
         \alpha^2(324x_1^2+68x_2-44x_3)
          +288x_1^2x_3-144x_1x_4
          +16x_2^2-16x_2x_3-32x_3^2=b_6.\label{0i6}
 \end{numcases}
Moreover, if $(\tilde x_1,\tilde x_2,\tilde x_3,\tilde x_4)$ and $(\hat x_1,\hat x_2,\hat x_3,\hat x_4)$ are two different solutions, then $\tilde x_1\neq \hat x_1$, and consequently $|\tilde x_1-\hat x_1|$
is a positive number depending only on $\alpha, b_{1},b_{2},b_{3},b_{4},b_{5},b_{6}.$
\end{lemma}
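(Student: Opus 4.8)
The plan is to reduce the whole system to a single polynomial equation in the variable $x_1$, after which the bound on the number of solutions and the statement about $x_1$ both follow. First I would solve the two linear equations \eqref{01} and \eqref{03} for $x_2$ and $x_3$: the $2\times2$ coefficient matrix is invertible (its determinant is $-3$), and a short computation gives
\begin{equation*}
x_2=\frac{b_1+b_3}{3}-4x_1^2,\qquad x_3=\frac{2b_1-b_3}{3}+x_1^2.
\end{equation*}
Next, subtracting \eqref{05} from \eqref{04} kills the cubic and the $x_1x_3$ terms and leaves $4x_1x_2+4x_4=b_4-b_5$, so that
\begin{equation*}
x_4=\frac{b_4-b_5}{4}-x_1x_2.
\end{equation*}
Thus $x_2,x_3,x_4$ are \emph{single-valued} polynomial functions of $x_1$ (of degrees $2,2,3$). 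This already settles the second assertion: if two solutions share the same first coordinate, they agree in all four coordinates, so distinct solutions have distinct values of $x_1$; in particular $|\tilde x_1-\hat x_1|>0$, and since both the map $x_1\mapsto(x_2,x_3,x_4)$ and the admissible values of $x_1$ depend only on $\alpha,b_1,\dots,b_6$, so does this number.

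It remains to bound the number of admissible $x_1$. Substituting the above expressions into the three still-unused equations \eqref{02}, \eqref{05}, \eqref{0i6} yields univariate equations in $x_1$, and the decisive feature — to be verified by a direct if lengthy computation, e.g.\ with Maple as elsewhere in the paper — is a systematic collapse of the top-degree terms. In \eqref{02} and \eqref{05} the cubic terms cancel identically, leaving two \emph{linear} equations
\begin{equation*}
\Bigl(7\alpha^2-\tfrac43(2b_1-b_3)\Bigr)x_1=b_2-\frac{b_4-b_5}{2},\qquad (4b_3-\alpha^2)x_1=\frac{3b_5-b_4}{2},
\end{equation*}
while in \eqref{0i6} the quartic terms cancel, leaving a \emph{quadratic} equation
\begin{equation*}
\bigl(8\alpha^2+96(2b_1-b_3)\bigr)x_1^2+C_1x_1+C_0=0
\end{equation*}
for suitable constants $C_1,C_0$ depending only on $\alpha,b_1,\dots,b_6$.

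Finally I would show that the leading coefficients of \eqref{02} and \eqref{0i6} cannot vanish simultaneously. Writing $A_1:=7\alpha^2-\tfrac43(2b_1-b_3)$ and $C_2:=8\alpha^2+96(2b_1-b_3)$, one checks that $C_2=0$ forces $2b_1-b_3=-\alpha^2/12$, whence $A_1=7\alpha^2+\alpha^2/9=64\alpha^2/9\neq0$ precisely because $\alpha\neq0$. Consequently either $C_2\neq0$, so that \eqref{0i6} is a genuine quadratic with at most two roots, or $C_2=0$, in which case \eqref{02} is a genuine linear equation determining $x_1$ uniquely. In either case $x_1$ takes at most two values, and since $x_1$ determines $(x_2,x_3,x_4)$ the system has at most two solutions.

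The main obstacle is purely computational: verifying the two cubic-term cancellations in \eqref{02}, \eqref{05} and the quartic-term cancellation in \eqref{0i6}, and extracting the leading coefficients $A_1$ and $C_2$ in closed form. The conceptual content is light — everything rests on this degree collapse together with the single algebraic fact that $\alpha\neq0$ prevents $A_1$ and $C_2$ from vanishing at once — but the bookkeeping for \eqref{0i6} in particular is delicate and is best delegated to a computer algebra system.
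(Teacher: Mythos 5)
Your proposal is correct, and the computations you deferred to a computer algebra system do check out by hand: eliminating $x_2,x_3,x_4$ via \eqref{01}, \eqref{03} and the difference of \eqref{04} and \eqref{05} collapses \eqref{02} and \eqref{05} to linear equations with exactly the coefficients you state, and collapses \eqref{0i6} to a quadratic with leading coefficient $C_2=8\alpha^2+96(2b_1-b_3)$ and middle coefficient $-36(b_4-b_5)$; moreover $C_2=0$ forces $2b_1-b_3=-\alpha^2/12$ and hence $A_1=7\alpha^2+\alpha^2/9=64\alpha^2/9\neq 0$, so your case split is exhaustive. The paper performs the same elimination but organizes the degeneracy analysis in the opposite order: it leads with the two \emph{linear} equations, observing that if either $7\alpha^2-\tfrac43(2b_1-b_3)$ or $\alpha^2-4b_3$ is nonzero then $x_1$ is already uniquely determined, and only in the doubly degenerate case $\alpha^2=4b_3$, $7\alpha^2=\tfrac43(2b_1-b_3)$ does it invoke \eqref{0i6}; there it first extracts the consistency relations $b_1=11b_3$, $b_4=3b_5$, $b_2=b_5$ (without which no solution exists) and then reduces \eqref{0i6} to the explicit quadratic $2048b_3x_1^2-72b_5x_1=1904b_3^2+b_6$, whose leading coefficient $2048b_3=512\alpha^2$ is nonzero. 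Your inverted split (quadratic as the primary counting device, the linear equation \eqref{02} as fallback) buys a modest simplification in that the consistency relations among the $b_i$ never need to be derived, at the price of computing the reduced \eqref{0i6} in full generality rather than under the degeneracy constraints; both arguments ultimately rest on the same fact, namely that $\alpha\neq 0$ prevents the relevant leading coefficients from vanishing simultaneously.
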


\begin{proof}
From \eqref{03} and \eqref{05}, we have that
       \begin{equation}\label{06}
       -\alpha^2 x_1+2x_1x_2+2x_4=b_5-4b_3x_1.
        \end{equation}
From \eqref{04} and \eqref{05}, we have that
       \begin{equation}\label{07}
  4x_1x_2+4x_4=b_4-b_5,
        \end{equation}
        and thus
             \begin{equation}\label{072}
 x_4=-x_1x_2+\frac{1}{4}(b_4-b_5),
        \end{equation}
        From \eqref{06} and \eqref{07}, we have that
            \begin{equation}\label{08}
            (\alpha^{2}-4b_{3})x_{1}=\frac{1}{2}(b_{4}-3b_{5}).
              \end{equation}
        On the other hand, by \eqref{01} and \eqref{03},
        we have that
        \begin{equation}\label{09}
x_2=-4x_1^2+\frac{1}{3}(b_1+b_3),
        \end{equation}
    \begin{equation}\label{0922}
    x_3=x_1^2+\frac{1}{3}(2b_1-b_3).
        \end{equation}
    From  \eqref{072} and \eqref{09}, $x_4$ can be expressed in terms of $x_1$ as follows:
    \begin{equation}\label{10}
        x_4=4x_1^3-\frac{1}{3}(b_1+b_3)x_1+\frac{1}{4}(b_4-b_5).
    \end{equation}
    Inserting \eqref{07}  and \eqref{0922} into \eqref{02} yields
    \begin{equation}\label{11}
          \left(7\alpha^2 -  \frac{4}{3}(2b_1-b_3)\right) x_1=b_2+ \frac{1}{2}(b_5-b_4) .
    \end{equation}
If 
\[
      \alpha^2\neq 4b_3\quad \mbox{or}\quad 
    7\alpha^2\neq \frac{4}{3}(2b_1-b_3),
\]
then $x_1$ can be solved in terms of $b_1,b_2,b_3,b_4,b_5$. Taking into account \eqref{09}-\eqref{10}, $x_2,x_3,x_4$ are also  determined by $b_1,b_2,b_3,b_4,b_5$.
Therefore, without loss of generality,  in the rest of the proof we   assume  that
    \begin{equation}\label{12}
      \alpha^2=4b_3,
    \end{equation}
      \begin{equation}\label{12t}
    7\alpha^2=\frac{4}{3}(2b_1-b_3).
    \end{equation}
     From \eqref{12} and \eqref{12t}, we have  that
        \begin{equation}\label{13}
     b_1=11b_3.
    \end{equation}
 From \eqref{08} and \eqref{12}, we have that
        \begin{equation}\label{131}
 b_4=3b_5.
    \end{equation}
    From \eqref{11}, \eqref{12t} and \eqref{131}, we have that
        \begin{equation}\label{132}
 b_2=b_5.
    \end{equation}
In view of \eqref{09}-\eqref{10} and \eqref{13}-\eqref{132}, $x_2,x_3,x_4$ can  be expressed in terms of $b_3,b_5$ and $x_1$:

        \begin{equation}\label{133}
x_2=-4x_1^2+4b_3,
    \end{equation}
            \begin{equation}\label{134}
     x_3=x_1^2+7b_3.
    \end{equation}
              \begin{equation}\label{135}
  x_4=4x_1^3-4b_3x_1+\frac{1}{2} b_5.
    \end{equation}
  Inserting \eqref{12} and \eqref{133}-\eqref{135} into \eqref{0i6}, we obtain a quadratic equation for $x_1$,
\begin{equation}\label{qdeq}
2048 b_3x_1^{2}-72 b_5x_1=1904 b_3^{2}+b_6.
\end{equation}
Notice that $b_{3}>0$ by \eqref{12} and the fact that $\alpha\neq 0$. Hence \eqref{qdeq}   has at most two solutions. Moreover, by \eqref{133}-\eqref{135}, it is easy to see that 
 if  $(\tilde x_1,\tilde x_2,\tilde x_3,\tilde x_4)$ and $(\hat x_1,\hat x_2,\hat x_3,\hat x_4)$ are two different solutions, then $\tilde x_1\neq \hat x_1$.
The proof is finished.
\end{proof}

\section{Generalized RH waves}\label{appc}

In this appendix, we deduce a class of exact solutions of  \eqref{ave} that are slightly more general than the RH waves \eqref{drhw}.  We begin with the following lemma. 

For $\mathbf p\in\mathbb S^2$ and $\mu\in\mathbb R,$ denote by $\mathsf R^{\mathbf p}_{\mu}$ the 3D rotation matrix  with  rotation axis $\mathbf p$ and rotation angle $\mu$. 
\begin{lemma}\label{lmc0}
Given $\mathsf R\in\mathbf S\mathbf O(3)$ and $\mathbf p\in\mathbb S^2$, it holds that 
\[\mathsf R\mathsf R^{\mathbf p}_\mu =\mathsf R^{\mathsf R\mathbf p}_\mu\mathsf R,\quad\forall\,\mu\in\mathbb R.\]
\end{lemma}
\begin{proof}
Fix $\mu\in\mathbb R$.
By Rodrigue's rotation formula, for any   $\mathbf x\in\mathbb S^2$, 
\[\mathsf R^{\mathbf p}_\mu\mathbf x=(\cos\mu)\mathbf x+\sin\mu(\mathbf p\times \mathbf x)+(1-\cos\mu)(\mathbf p\cdot\mathbf x)\mathbf p.\]
Hence   for any  $\mathbf x\in\mathbb S^2$, 
\begin{align*}
   \mathsf R\mathsf R^{\mathbf p}_\mu \mathbf x&= \cos\mu (\mathsf R \mathbf x)+\sin\mu\left(\mathsf R\mathbf p\times  \mathsf R \mathbf x \right)+(1-\cos\mu)\left(\mathbf p\cdot  \mathbf x \right)\mathsf R\mathbf p \\
    &=\cos\mu (\mathsf R \mathbf x)+\sin\mu\left(\mathsf R\mathbf p\times  \mathsf R \mathbf x \right)+(1-\cos\mu)\left(\mathsf R\mathbf p\cdot  \mathsf R\mathbf x \right)\mathsf R\mathbf p\\
    &=\mathsf R^{\mathsf R\mathbf p}_\mu\mathsf R\mathbf x,
\end{align*}
where we used $\mathsf R(\mathbf p\times \mathbf x)=\mathsf R \mathbf p\times \mathsf R\mathbf x$ and $\mathsf R\mathbf p\cdot  \mathsf R\mathbf x=\mathbf p\cdot\mathbf x.$
\end{proof}

Based on Lemma \ref{lmc0} and the rotational invariance of the vorticity equation $(V_{0})$ under $\mathbf S\mathbf O(3)$, we can prove the following lemma.
\begin{lemma}\label{lmc1}
Given $\mathbf p\in\mathbb S^2$, 
\begin{equation}\label{rr}
 \alpha \mathbf p\cdot \mathbf x+Y(\mathsf R^{\mathbf p}_{-ct}\mathbf x),\quad Y\in \mathbb E_{j},\quad c:=\alpha\left(  \frac{1}{2}-\frac{1}{j(j+1)}\right)
\end{equation}
 is a solution of $(V_0).$
\end{lemma}

\begin{proof}
Take $\mathsf R\in\mathbf S\mathbf O(3)$ such that $\mathsf R\mathbf p=\mathbf e_3$. Fix $Y\in\mathbb E_j$. Then $ Y\circ \mathsf R^{-1}\in\mathbb E_j.$
By \eqref{drhw},   
 \[\alpha \mathbf e_{3}\cdot  \mathbf x+  Y(\mathsf R^{-1}\mathsf R^{\mathbf e_{3}}_{-ct}\mathbf x),\quad c=\alpha\left(  \frac{1}{2}-\frac{1}{j(j+1)}\right)
 \]
 solves $(V_0)$.
 Taking into account the rotational invariance of  $(V_{0})$ under $\mathbf S\mathbf O(3)$ (see \cite{CG}, Section 2.3), 
 \[\alpha \mathbf e_{3}\cdot (\mathsf R\mathbf x)+  Y(\mathsf R^{-1}\mathsf R^{\mathbf e_{3}}_{-ct}\mathsf R\mathbf x)=\alpha \mathbf p\cdot  \mathbf x+  Y(\mathsf R^{-1}\mathsf R^{\mathbf e_{3}}_{-ct}\mathsf R\mathbf x),\quad c=\alpha\left(  \frac{1}{2}-\frac{1}{j(j+1)}\right)\]
 also  solves $(V_0)$.
By Lemma \ref{lmc0},
 \[\mathsf R^{\mathbf e_{3}}_{\mu}\mathsf R=\mathsf R\mathsf R^{\mathbf p}_{\mu},\quad\forall\,\mu\in\mathbb R.\]
Hence  
\[\alpha \mathbf p\cdot  \mathbf x+  Y(\mathsf R^{-1}\mathsf R^{\mathbf e_{3}}_{-ct}\mathsf R\mathbf x)=\alpha \mathbf p\cdot  \mathbf x+ Y( \mathsf R^{\mathbf p}_{-ct}\mathbf x),\quad c=\alpha\left(  \frac{1}{2}-\frac{1}{j(j+1)}\right)\]
solves $(V_0)$.  
\end{proof}

The aim of this appendix is to prove the following Proposition.

\begin{proposition}\label{lem999}
Let $\mathbf p\in\mathbb S^{2}$, $\alpha\in\mathbb R$, and $Y\in\mathbb E_{j}$ with $j\in\mathbb Z_+$.  
Then 
\[ \alpha\mathbf p\cdot (\mathsf R^{\mathbf e_3}_{\omega t}\mathbf x)+Y(\mathsf R^{\mathbf p}_{-ct}\mathsf R^{\mathbf e_{3}}_{\omega t}\mathbf x), \quad c:=\alpha\left(\frac{1}{2}-\frac{1}{j(j+1)}\right)\]
is a solution of the vorticity equation \eqref{ave}.

\end{proposition}

\begin{proof}
  Recall \eqref{iffsl}: 
\begin{equation}\label{gg3e2}
\mbox{$\zeta(\mathbf x,t)$ solves $(V_{0})$ if and only if $\zeta(\mathsf R^{\mathbf e_{3}}_{\omega t}\mathbf x,t)$ solves \eqref{ave}}. 
\end{equation}
The desired result follows  from Lemma \ref{lmc1} and \eqref{gg3e2} immediately.
\end{proof}

\bigskip

 \noindent{\bf Acknowledgements:}
 The authors would like to thank Prof. Bao Wang from Ningbo University for  helpful discussions on the proof of Proposition \ref{yc088}.
D. Cao was supported by National Key R\&D Program of China
	(Grant 2022YFA1005602). G. Wang was supported by National Natural Science Foundation of China (12001135) and China Postdoctoral Science Foundation (2019M661261, 2021T140163). B. Zuo was supported by National Natural Science Foundation of China (12101154).

\bigskip
\noindent{\bf  Data Availability} Data sharing not applicable to this article as no datasets were generated or analysed during the current study.

\bigskip
\noindent{\bf Declarations}

\bigskip
\noindent{\bf Conflict of interest}  The authors declare that they have no conflict of interest to this work.

\phantom{s}
 \thispagestyle{empty}

\end{document}